\numberwithin{equation}{section}
\newtheorem{theorem}{Theorem}[section]
\newtheorem{proposition}[theorem]{Proposition}
\newtheorem{definition}[theorem]{Definition}
\theoremstyle{definition}
\newtheorem{example}[theorem]{Example}
\newtheorem{remark}[theorem]{Remark}
\newtheorem{assumption}[theorem]{Assumption}
\newcommand{\K}{\mathcal{K}}
\title{A Kernel-Density-Estimator Minimizing Movement Scheme for Diffusion Equations}
\begin{document}

\author{
Florentine Catharina Flei$\ss$ner 
\thanks{Technische Universit\"at M\"unchen  email:
  \textsf{fleissne@ma.tum.de}.
  } 
}
 \date{}

\maketitle

\begin{abstract}
The mathematical theory of a novel variational approximation scheme for general second and fourth order partial differential equations 
\begin{equation}\label{eq: diffusion equation abstract}
\partial_t u  - \nabla\cdot\Big(u\nabla\frac{\delta\phi}{\delta u}(u)\Big|\nabla\frac{\delta\phi}{\delta u}(u)\Big|^{q-2}\Big) \ = \ 0, \quad\quad u\geq0, 
\end{equation}
$q\in(1, +\infty)$, is developed; the Kernel-Density-Estimator Minimizing Movement Scheme (KDE-MM-Scheme) preserves the structure of \eqref{eq: diffusion equation abstract} as a gradient flow with regard to an energy functional $\phi$ and a Wasserstein distance in the space of probability measures, at the same time imitating the steepest descent motion of a finite number of particles / data points on a discrete timescale. Roughly speaking, the KDE-MM-Scheme constitutes a simplification of the classical Minimizing Movement scheme for \eqref{eq: diffusion equation abstract} (often referred to as `JKO scheme'), in which the corresponding minimum problems are relaxed and restricted to the values of Kernel Density Estimators each associated with a finite dataset. Rigorous mathematical proofs show that the KDE-MM-Scheme yields (weak) solutions to \eqref{eq: diffusion equation abstract} if we let the time step sizes and the dataset sizes (particle numbers) simultaneously go to zero and infinity respectively. Uniting abstract analysis in metric spaces with application-orientated concepts from statistics and machine learning, our examinations will form the mathematical foundation for a novel computationally tractable algorithm approximating solutions to \eqref{eq: diffusion equation abstract}. 

A particular ingredient for our theory is a thorough and general analysis of the JKO scheme under the occurrence of $\Gamma$-perturbations $\phi_n$ of the energy functional $\phi$. The discrete-time steepest descents w.r.t. $\phi_n, \ n\in\mathbb{N},$ are directly linked with \eqref{eq: diffusion equation abstract} through an appropriate correlation between time step sizes and parameters that only depends on the velocity of $\Gamma$-convergence $\phi_n\stackrel{\Gamma}{\to}\phi$. 
\end{abstract}

\section{Introduction}\label{sec: intro}

The partial differential equation 
\begin{equation}\label{eq: diffusion equation intro}
\partial_t u  - \nabla\cdot\Big(u\nabla\frac{\delta\phi}{\delta u}(u)\Big|\nabla\frac{\delta\phi}{\delta u}(u)\Big|^{q-2}\Big) \ = \ 0, \quad\quad u\geq0, 
\end{equation}
$q\in(1, +\infty)$, 
represents a common model for describing the evolution in time of the density $u$ of some quantity in a physical, chemical, biological, ecological or economic process, where total mass is conserved and diffusion is governed by the variational derivative $\frac{\delta\phi}{\delta u}$ of an energy functional $\phi$. Presumably the best-known second order examples of \eqref{eq: diffusion equation intro} (for $q=2$) are the heat equation
\begin{equation*}
\partial_t u - \Delta u \ =  \ 0 \quad\quad \Big(\phi(u):=\int{u\log u \mathrm{d}x}\Big), 
\end{equation*}
the porous medium equation (for $m>1$) / fast diffusion equation (for $m<1$) 
\begin{equation*}
\partial_t u - \Delta (u^m) \ = \ 0 \quad\quad \Big(\phi(u):= \frac{1}{m-1} \int{u^m\mathrm{d}x}\Big)
\end{equation*}
and a general second order diffusion equation with external potential and interaction term
\begin{eqnarray*}
\partial_t u - \nabla\cdot\Big(u\Big(\nabla F'(u) + \nabla V + (\nabla W)\ast u \Big)\Big) \ = \ 0 \quad\quad \\
\Big(\phi(u):=\int{\int{\Big[F(u(x)) + V(x) u(x) + \frac{1}{2}W(x-y)u(x) \Big] u(y)\mathrm{d}x}\mathrm{d}y}\Big), 
\end{eqnarray*}
whose areas of application are ranging from the theory of heat conduction, heat radiation in plasmas, models for studying the motion of a gas in a porous medium, the motion of a population or the evolution in time of a region occupied by water where groundwater infiltration through a porous stratum occurs  (cf. \cite{widder1976heat, vazquez2007porous, zeldovich, berryman1978nonlinear, darcy, leibenzon1930motion, muskat1937flow, gurtin1977diffusion, boussinesq}) to the description of time-dependent probability distributions of velocities in the theory of granules affected by their environment, friction and inelastic collisions between granules with different velocities (cf. \cite{benedetto1997kinetic, benedetto1998non, toscani2000one, carrillo2003kinetic, carrillo2006contractions}), to name but a few. Classic fourth order examples of \eqref{eq: diffusion equation intro} are the thin film equation
\begin{equation*}
\partial_t u + \nabla\cdot \big(u\nabla\Delta u\big) \ = \ 0  \quad\quad \Big(\phi(u):=\frac{1}{2}\int{|\nabla u|^2\mathrm{d}x}\Big), 
\end{equation*}
applied to lubrication theory for describing the motion of a moving contact line (cf. e.g. \cite{bertozzi1998mathematics}), and the quantum drift diffusion equation
\begin{equation*}
\partial_t u + 4\nabla\cdot\big(u\nabla\frac{\Delta\sqrt{u}}{\sqrt{u}}\big) \ = \ 0 \quad\quad \Big(\phi(u):=\int{\frac{|\nabla u|^2}{u}\mathrm{d}x}\Big), 
\end{equation*}
in which $u$ stands e.g. for the electron density in a quantum model for semiconductors (cf. \cite{pinnau2000linearized, jungel2001positivity, jungel2009transport}). 

In the 90's, Felix Otto originated, together with Jordan and Kinderlehrer \cite{otto1998dynamics, jordan1997free, jordan1998variational, otto1996double, otto1998lubrication, otto2001geometry}, the interpretation of dynamics governed by \eqref{eq: diffusion equation intro} as a steepest descent with regard to $\phi$ and a Wasserstein distance in the space of probability measures, building on a Riemannian formalism (`Otto calculus') and an approximation by discrete-time steepest descents (`JKO scheme'). The common knowledge of the associated gradient flow structure advanced through the book \cite{AGS08} by Ambrosio, Gigli and Savar\'e.

This paper addresses the issue of a computationally tractable approximation scheme to nonnegative solutions $u$ of \eqref{eq: diffusion equation intro} built on the aforementioned gradient flow structure. Our approach marries the abstract theory of De Giorgi's Minimizing Movement scheme as fundamental variational approximation technique for gradient flows with the application-orientated concept of Kernel Density Estimation from the fields of machine learning and statistics. A careful analysis based on the author's paper ``$\Gamma$-Convergence and Relaxations for Gradient Flows in Metric Spaces: a Minimizing Movement Approach'' \cite{fleissner2016gamma} serves to bridge the gap between these two concepts, providing a rigorous mathematical justification and foundation for our \textsf{\textit{Kernel-Density-Estimator Minimizing Movement Scheme}} or \textsf{\textit{KDE-MM-Scheme}} for short. 

\subsection{The Minimizing Movement Approach}\label{sec: MM approach}
The notion of \textit{Minimizing Movements} was introduced by Ennio De Giorgi at the beginning of the 90's \cite{DeGiorgi93}. De Giorgi who drew his inspiration from the paper \cite{almgren1993curvature} by Almgren, Taylor and Wang regarded his concept as ``natural meeting point'' of many evolution problems from different research fields; indeed, the concept of Minimizing Movements has proved extremely useful, with a wide range of applications in analysis, geometry, physics and numerical analysis, see e.g. \cite{DeGiorgi93, almgren1993curvature, ambrosio1995minimizing, AGS08, fleissner2016gamma, fleissner2021minimizing, fleissner2020reverse}, \cite{braides2014local} and \cite{mielke2011differential, mielke2016balanced}. A general Minimizing Movement (MM) scheme for a time-invariant evolution system in some topological space $\mathcal{X}$ consists of a functional $\Phi: (0,1)\times\mathcal{X}\times\mathcal{X}\to[-\infty, +\infty]$ and of successively solving the minimum problems
\begin{equation}\label{eq: general MM scheme}
\mu_\tau^m \quad\text{ is a minimizer for }\quad \Phi(\tau, \mu_\tau^{m-1},\cdot) \quad\quad\quad (m\in\mathbb{N}), 
\end{equation}
for given time step size $\tau>0$ and initial datum $\mu_\tau^0\in\mathcal{X}$, with the purpose of studying the limiting behaviour of the corresponding piecewise constant interpolations
\begin{equation}\label{eq: piecewise constant interpolation}
\mu_\tau(0)=\mu_\tau^0, \quad\quad \mu_\tau(t) \equiv \mu_\tau^m \quad \text{ if } t\in((m-1)\tau, m\tau], \ m\in\mathbb{N},
\end{equation}
as the time step size $\tau\downarrow0$. 

The seminal articles \cite{almgren1993curvature} and \cite{DeGiorgi93} established the Minimizing Movement scheme \eqref{eq: general MM scheme} associated with
\begin{equation}\label{eq: discrete time gf}
\Phi(\tau, \mu, \nu):= \phi(\nu) + \frac{1}{2\tau}{\sf d}(\nu, \mu)^2
\end{equation}
as discrete-time steepest descent motion with regard to an energy functional $\phi: \mathcal{X}\to(-\infty, +\infty]$ and a distance ${\sf d}$ in a general metric space $(\mathcal{X}, {\sf d})$ (provided that solutions to the minimum problems \eqref{eq: general MM scheme} exist for small $\tau$). Besides giving concrete examples thereof, De Giorgi \cite{DeGiorgi93} anticipated the close connection between Minimizing Movements and the notion of continuous-time gradient flows in general metric spaces, which was proved later by Ambrosio, Gigli and Savar\'e \cite{AGS08}. 

Let $\mathcal{X}$ be the space $\mathcal{P}_2(\mathbb{R}^d)$ of probability measures with finite moments of second order (i.e. $\int_{\mathbb{R}^d}{|x|^2\mathrm{d} \mu} < +\infty$) endowed with the $2$-Wasserstein distance $\mathcal{W}_2$ and  $\phi: \mathcal{P}_2(\mathbb{R}^d)\to(-\infty,+\infty]$ be an energy functional.
A natural chain rule enables the linkage between three approaches to the notion of gradient flows in $(\mathcal{P}_2(\mathbb{R}^d), \mathcal{W}_2)$, namely discrete-time steepest descents \eqref{eq: general MM scheme}, \eqref{eq: discrete time gf} w.r.t. $\phi$ and $\mathcal{W}_2$, the gradient-flow-type differential equation 
\begin{equation}\label{eq: differential inclusion intro}
v_t \ = \ -D_l\phi(\mu(t))
\end{equation}
and the energy dissipation equality 
\begin{equation}\label{eq: ede intro}
\phi(\mu(0)) - \phi(\mu(t)) \ = \ \frac{1}{2}\int_0^t{|\partial^-\phi|^2(\mu(r))\mathrm{d}r} \ + \ \frac{1}{2}\int_0^t{|\mu'|^2(r)\mathrm{d}r}
\end{equation}
arising from the abstract theory of gradient flows in metric spaces, see Proposition \ref{prop: 2.1}. All relevant definitions (Wasserstein distance, slope $|\partial^-\phi|$ and subdifferential $\{D_l\phi\}$ of $\phi$, metric derivative $|\mu'|$ and tangent vector field $v$ of the locally absolutely continuous curve $\mu$ in $(\mathcal{P}_2(\mathbb{R}^d), \mathcal{W}_2)$) are given in the first part of Section \ref{sec: 2.1}. As outlined therein, $D_l\phi$ can be identified with $\nabla\frac{\delta\phi}{\delta u}$ so that \eqref{eq: differential inclusion intro} is a weak reformulation of \eqref{eq: diffusion equation intro}, $q=2$, in the space of probability measures. We carefully examine, as an example, the underlying structure \eqref{eq: differential inclusion intro}, \eqref{eq: ede intro} of second order diffusion equations 
\begin{equation}\label{eq: second order diffusion equation}
\partial_t u - \nabla\cdot\Big(u\Big(\nabla F'(u) + \nabla V + (\nabla W)\ast u\Big) \Big) \ = \ 0\quad \text{ in } (0, +\infty)\times\Omega
\end{equation}
with no-flux boundary condition
\begin{equation}\label{eq: second order diffusion equation 2}
u\Big(\nabla F'(u) + \nabla V + (\nabla W) \ast u\Big)\cdot {\sf n} \ = \ 0 \quad \text{ on } (0, +\infty)\times\partial\Omega, 
\end{equation}
associated with $\phi: \mathcal{P}_2(\mathbb{R}^d)\to(-\infty, +\infty]$, 
\begin{equation}\label{eq: energy functional intro}
\phi(\mu):= \int_{\mathbb{R}^d\times\mathbb{R}^d}{[F(u(x)) + V(x)u(x)+\frac{1}{2}W(x-y)u(x)]u(y)\mathrm{d}x\mathrm{d}y} 
\end{equation}
if $\mu=u\mathcal{L}^d\ll\mathcal{L}^d\llcorner\Omega$ (i.e. $u\equiv0$ on $\mathbb{R}^d\setminus\Omega$) and $\phi(\mu):=+\infty$ else; our general assumptions on $F, V, W$ and $\Omega$ include the case that $\phi$ is \textit{not} displacement convex in $(\mathcal{P}_2(\mathbb{R}^d), \mathcal{W}_2)$, see Example \ref{ex: chain rule} and Proposition \ref{prop: chain rule} for the validation of the corresponding chain rule. 

We attach a particular value not only to the approximation of solutions to \eqref{eq: diffusion equation intro} by discrete-time steepest descents but also to the exact characterizations \eqref{eq: differential inclusion intro} and \eqref{eq: ede intro} of continuous-time gradient flows in $(\mathcal{P}_2(\mathbb{R}^d), \mathcal{W}_2)$ because we are concerned with the effect of $\Gamma$-perturbations $\phi_n$ of $\phi$ on the steepest descent motion; our general stability theory involves the notion of both discrete-time and continuous-time steepest descents, see Section \ref{sec: 2.2} and Section \ref{sec: 1.3} for an overview. 

If no chain rule is considered, the Minimizing Movement scheme \eqref{eq: general MM scheme} associated with \eqref{eq: discrete time gf}, $\phi$ and $\mathcal{W}_2$ (often referred to as `JKO scheme') produces solutions to a weak form of \eqref{eq: differential inclusion intro} 
satisfying an energy inequality instead of \eqref{eq: ede intro}, cf. Thms. 11.1.6 and 2.3.3 in \cite{AGS08} and Remark \ref{rem: absence of chain rule}. 
It first came to common knowledge through the examinations by Jordan, Kinderlehrer and Otto \cite{otto1998dynamics, jordan1997free, jordan1998variational, otto1996double, otto1998lubrication} and is nowadays a widely used variational approximation technique for tackling partial differential equations of the form \eqref{eq: diffusion equation intro} with $q=2$ (cf. Chaps. 10.1, 10.4 and 11 in \cite{AGS08}, Chap. 8 in \cite{santambrogio2015optimal}, Chap. 4 in \cite{santambrogio2017euclidean} and the references in all three of them).  It is a well-known fact that the JKO scheme, associated with the respective energy functionals $\phi$, yields e.g. (weak) solutions to the heat equation \cite{jordan1998variational}, the porous medium / fast diffusion equation \cite{otto1998dynamics, otto1996double}, the thin film equation \cite{matthes2009family}, the quantum drift diffusion equation \cite{gianazza2009wasserstein} and
to general second order diffusion equations with external potential and interaction term \cite{AGS08}. 

Finally, we point out that in Section \ref{sec: 2.1}, the Minimizing Movement approach to \eqref{eq: diffusion equation intro} and the corresponding gradient flow structure are treated for $q\in(1,+\infty)$.

\subsection{Kernel Density Estimation}\label{sec: kde}

A function estimator is defined as a random variable with values in some function space $\mathfrak{F}$, emerging from a mapping 
of idependent and identically distributed (i.i.d.) data points $X_1, ... , X_n$ drawn from the same but generally unknown probability distribution (cf. e.g. Sect. 5.4 in \cite{goodfellow2016deep}). 

Supposing that $X_1, ... , X_n$ represent an i.i.d. sample from a Borel probability distribution on $\mathbb{R}^d$ having a Lebesgue density function $\rho$ and setting $\mathfrak{F}:=\{u\in\mathrm{L}^1(\mathbb{R}^d) \ | \ u\geq 0, \int_{\mathbb{R}^d}{u(x)\mathrm{d}x} = 1\}$  ($=$ set of probability density functions), we can try to capture $\rho$ by finding suitable estimators
\begin{equation}\label{eq: function estimator}
\hat{\rho}_n := f_n(X_1, ..., X_n), \quad\quad f_n: \Big(\mathbb{R}^d\Big)^n\to\mathfrak{F}, \quad\quad n\in\mathbb{N}. 
\end{equation}
Therein lies the purpose of \textit{probability density estimation} going back to \cite{rosenblatt1956remarks, whittle1958smoothing, parzen1962estimation}. As a first natural approach to it, it is proposed in \cite{rosenblatt1956remarks} to consider, for the case $d=1$, the random difference quotient
\begin{equation*}
\frac{F_n(x+h)-F_n(x-h)}{2h}
\end{equation*}
with $F_n$ being the sample distribution function (i.e. $F_n(y)$ is equal to the number of observations $\leq y$ among $X_1,...,X_n$ divided by $n$), and to pass to the limit in the sample size $n\uparrow+\infty$ and bandwidth $h\downarrow0$ simultaneously. The fact that this approach corresponds to the choice
\begin{equation*}
f_n(y_1,...,y_n)(x):=\frac{1}{n\cdot h(n)}\sum_{i=1}^{n}{\K\Big(\frac{x-y_i}{h(n)}\Big)}  \ \text{ with } \ \K(y):=\begin{cases} \frac{1}{2} &\text{ if } y\in[-1,1] \\ 0 &\text{ else }\end{cases}   
\end{equation*}
and $h(n)>0$ ($\lim_{n\to+\infty}h(n)=0$) in \eqref{eq: function estimator} motivated Murray Rosenblatt \cite{rosenblatt1956remarks} and Emanuel Parzen \cite{parzen1962estimation} to allow for general functions $\K$ in the above definition of their probability density estimators wherein the origin of \textit{Kernel Density Estimation} lies. 

\begin{definition}[Kernel function]\label{def: kernel function}
A kernel function is defined as a nonnegative function $\K: \mathbb{R}^d\to[0,+\infty)$ with $\int_{\mathbb{R}^d}{\K(x)  \mathrm{d}x} = 1$. 

Every kernel function $\K$ is associated with a family of functions
\begin{equation}\label{eq: K_h}
\K_h: \mathbb{R}^d\to[0,+\infty), \quad \K_h(x) := \frac{1}{h^d}\K\Big(\frac{x}{h}\Big), \quad h >0.  
\end{equation}
\end{definition}

The convolution between such scaled kernel function \eqref{eq: K_h} and the empirical measure $\frac{1}{n}\sum_{i=1}^{n}{\delta_{X_i}}$ yields a Kernel Density Estimator ($\delta_y$ denotes the Dirac measure with centre $y\in\mathbb{R}^d$). 

\begin{definition}[Kernel Density Estimator]\label{def: kernel density estimator}
Assuming that $X_1, ... , X_n$ is an i.i.d. sample from a probability distribution with Lebesgue density function $\rho$ and $\K$ is a kernel function, the function estimator
\begin{equation}\label{eq: kernel density estimator}
\hat{\rho}_{n,h}:=\frac{1}{n}\sum_{i=1}^{n}{\K_h(\cdot-X_i)}
\end{equation}
is called Kernel Density Estimator (KDE) for $\rho$ associated with the sample size $n\in\mathbb{N}$ and bandwidth $h>0$. 
\end{definition}

The asymptotic behaviour of Kernel Density Estimators as the sample size $n\uparrow+\infty$ and the bandwidth $h\downarrow0$ simultaneously is well examined, including extremely useful strong consistency results and uniform convergence rates, see Section \ref{sec: 2.3}. 

Kernel Density Estimators are used for example in the wide areas of clustering and topological data analysis with various applications in computer vision, text analysis, biology, chemistry and astronomy including image processing, anomaly detection, unsupervised and semi-supervised classification, genetic profiling and protein analysis, to name but a few (cf. \cite{rosenblatt1956remarks, parzen1962estimation, abraham2004asymptotic, li2007nonparametric, rinaldo2010generalized, arias2016estimation, wang2017optimal, jiang2017uniform, fasy2014confidence, kim2019uniform} and the references therein). 

\subsection{$\Gamma$-Convergence for Steepest Descents and the Kernel-Density-Estimator Minimizing Movement Scheme}\label{sec: 1.3}

The concepts of Minimizing Movements and Kernel Density Estimators are married by a thorough examination of the discrete-time steepest descent motion under the occurrence of $\Gamma$-perturbations of the energy functional. 

\subsubsection{$\Gamma$-Convergence for Steepest Descents}\label{sec: 1.3.1}

The Minimizing Movement scheme \eqref{eq: general MM scheme}, \eqref{eq: discrete time gf} 
can be performed along a sequence of $\Gamma$-converging functionals $\phi_n\stackrel{\Gamma}{\rightarrow}\phi$ in $(\mathcal{P}_2(\mathbb{R}^d), \mathcal{W}_2)$ by assigning a parameter $n=n(\tau)\in\mathbb{N}$ to every time step size $\tau>0$, replacing \eqref{eq: discrete time gf} with 
\begin{equation}\label{eq: MM scheme along sequence}
\Phi(\tau, \mu, \nu) \ := \ \phi_{n(\tau)}(\nu) \ + \ \frac{1}{2\tau}\mathcal{W}_2(\nu, \mu)^2
\end{equation}
and simultaneously passing to the limit in the time step sizes $\tau\downarrow0$ and parameters $n(\tau)\uparrow+\infty$.
There is a direct connection between the Minimizing Movement scheme associated with \eqref{eq: MM scheme along sequence} and the weak reformulation \eqref{eq: differential inclusion intro} of the diffusion equation 
\begin{equation}\label{eq: diffusion equation intro 2}
\partial_t u  - \nabla\cdot\Big(u\nabla\frac{\delta\phi}{\delta u}(u)\Big) \ = \ 0, \quad\quad u\geq0
\end{equation}
(i.e. \eqref{eq: diffusion equation intro} with $q=2$):  
\begin{itemize}
\item Let energy functionals $\phi, \ \phi_n: \mathcal{P}_2(\mathbb{R}^d)\to(-\infty, +\infty] \ (n\in\mathbb{N})$ be given with $\phi_n\stackrel{\Gamma}{\rightarrow}\phi$ in $(\mathcal{P}_2(\mathbb{R}^d), \mathcal{W}_2)$. 
Assuming a natural coercivity condition and a natural chain rule, there exists a sequence $(n_\tau)_{\tau>0}, \ n_\tau\in\mathbb{N},$ 
such that the \textit{relaxed Minimizing Movement scheme}
\begin{equation}\label{eq: relaxed MM scheme}
\Phi(\tau, \mu_\tau^{m-1}, \mu_\tau^m) \ \leq \ \inf_{\nu\in\mathcal{P}_2(\mathbb{R}^d)}{\Phi(\tau, \mu_\tau^{m-1}, \nu)} \ + \ \gamma_\tau \quad\quad  (m\in\mathbb{N}, \ \gamma_\tau>0)
\end{equation}
associated with \eqref{eq: MM scheme along sequence} yields solutions to \eqref{eq: differential inclusion intro} and \eqref{eq: ede intro} (continuous-time gradient flow of $\phi$) whenever the parameters $n(\tau)\geq n_\tau$, the order of the error term $\gamma_\tau$ is $o(\tau)$ and the sequence of initial data $\mu_\tau^0\to\mu^0$ is a  so-called recovery sequence, i.e. $\phi_{n(\tau)}(\mu_\tau^0)\to\phi(\mu^0)$. 

(see Theorems \ref{thm: MM approach I} and \ref{thm: MM approach II}) 
\item Such appropriate correlations between time step sizes $\tau$ and parameters $n=n(\tau)$ can be precisely determined through a general condition relating the Minimizing Movement motion generated by \eqref{eq: MM scheme along sequence} with the slope $|\partial^-\phi|$ of $\phi$.
(see Assumption \ref{ass: 2})
\end{itemize}
The crucial point is the sequence $(n_\tau)_{\tau>0}$ is completely independent of initial data $\mu_\tau^0, \mu^0$ and of (approximate) minimizers $\mu_\tau^m$ in \eqref{eq: relaxed MM scheme}; it solely depends on the velocity of $\Gamma$-convergence $\phi_n\stackrel{\Gamma}{\rightarrow}\phi$. We thereby obtain a general and robust stability statement for steepest descents under the occurrence of $\Gamma$-perturbations $\phi_n$ of the energy functional $\phi$. The fact that we are able to relax the minimum problems in every step \eqref{eq: relaxed MM scheme} of the scheme is a further special feature of our theory. 

We note that the key to stability is to examine the discrete-time steepest descents w.r.t. $\phi_n, \ n\in\mathbb{N}$, focusing on the interplay between parameters and time step sizes. We do not restrict ourselves to the study of the limiting behaviour of the continuous-time steepest descents which would amount to letting $\tau\downarrow0$ for fixed $n\in\mathbb{N}$ first and only then $n\uparrow+\infty$; the limit of a sequence of continuous-time steepest descents w.r.t. $\phi_n, \ n\in\mathbb{N},$ is in general \textit{no} solution to the continuous-time gradient flow of $\phi$ because the slopes of the $\Gamma$-perturbations are not related to the slope of $\phi$, see Sect. 1 in \cite{fleissner2016gamma}. 

The proof of Theorems \ref{thm: MM approach I} and \ref{thm: MM approach II} is based on the careful study of the structure \eqref{eq: differential inclusion intro} and \eqref{eq: ede intro} of continuous-time gradient flows in $(\mathcal{P}_2(\mathbb{R}^d), \mathcal{W}_2)$ from Section \ref{sec: 2.1} (whence the ``chain rule'' naturally arises) and an application of Thms. 3.4 and 6.1 from \cite{fleissner2016gamma}.
Section \ref{sec: 2.2} provides detailed and additional information on our stability theory including a non-uniform distribution of the error term (i.e. $\gamma_\tau^m$ depending on $m\in\mathbb{N}$ instead of $\gamma_\tau$, see Remark \ref{rem: error term}) and the case that $q\neq2$ in \eqref{eq: diffusion equation intro}. We refer the reader to Remark \ref{rem: main ass} for special cases: e.g. if $\phi_n\stackrel{\Gamma}{\to}\phi$ and the corresponding slopes satisfy a $\Gamma$-liminf inequality (`Serfaty-Sandier condition' from \cite{sandier2004gamma, serfaty2011gamma}), then every choice $n=n(\tau)\uparrow+\infty$ (as $\tau\downarrow0$) is appropriate.

Thanks to our theory, new possibilities are opening up in the study of the variational limit of steepest descent movements (cf. \cite{fleissner2016gamma}). 
This paper focuses on a novel approach to transforming the JKO scheme into a computationally tractable Minimizing Movement scheme for \eqref{eq: diffusion equation intro}. 

\subsubsection{The KDE-MM-Scheme}\label{sec: KDE-MM-Scheme intro}

The idea behind the \textsf{KDE-MM-Scheme} is to carry out the relaxed Minimizing Movement scheme \eqref{eq: relaxed MM scheme} associated with \eqref{eq: MM scheme along sequence} along a \textsf{$\Gamma$-KDE-Approximation} of the energy functional $\phi$; a $\Gamma$-KDE-Approximation is a sequence of $\Gamma$-perturbations $\phi_n$ of $\phi$ whose effective domains $\{\phi_n < +\infty\}$ are concentrated in the respective KDE ranges corresponding to a given kernel function $\K$ (with $\K(\cdot)\mathcal{L}^d\in\mathcal{P}_2(\mathbb{R}^d)$), the sample size $n\in\mathbb{N}$ and bandwidth $h=h(n)>0$ (with $h(n)\downarrow0$ as $n\uparrow+\infty$) and for which Kernel Density Estimators almost surely constitute a recovery sequence corresponding to each $\mu^0\in\{\phi<+\infty\}$, i.e.
\begin{equation}\label{eq: effective domain phi_n}
\phi_n(\mu) < +\infty \quad\Rightarrow\quad \exists y_1, .., y_n: \ \mu =  \Big(\frac{1}{n}\sum_{i=1}^{n}{\K_{h(n)}(\cdot-y_i)}\Big)\mathcal{L}^d
\end{equation}
and 
\begin{equation}
\lim_{n\to+\infty}\mathcal{W}_2(\hat{\mu}_{n, h(n)}, \mu^0) \ = \ 0 \quad\text{and}\quad \lim_{n\to+\infty}\phi_n(\hat{\mu}_{n, h(n)}) \ = \ \phi(\mu^0)
\end{equation}
with probability $1$ whenever $\mu^0=\rho\mathcal{L}^d\in\{\phi< +\infty\}$, $\hat{\rho}_{n, h(n)}$, $n\in\mathbb{N}$, is a sequence of Kernel Density Estimators \eqref{eq: kernel density estimator} for $\rho$ and $\hat{\mu}_{n, h(n)}:=\hat{\rho}_{n, h(n)}\mathcal{L}^d$. 
(see Definition \ref{def: Gamma KDE Approximation})

It is reasonable to assume that the exact form of the initial probability density is generally unknown in practical applications of our theory so that we need to capture it by an i.i.d. sample from the corresponding probability distribution. This is our first motivation for the definition of a $\Gamma$-KDE-Approximation. Secondly, under \eqref{eq: effective domain phi_n}, the process of minimization \eqref{eq: relaxed MM scheme}, \eqref{eq: MM scheme along sequence} is restricted to a clearly structured subset of probability densities emerging from $\K$ through basic function operations, which should prove advantageous with regard to a practical implementation of the scheme. The KDE-MM-Scheme both preserves the original steepest descent character of the JKO scheme in the space of probability measures and mimics the motion of particles / data points (the picture of the particle motion becomes particularly apparent assuming that $\K$ has compact support with maximum value at $0$). 

A special feature of the KDE-MM-Scheme, which is due to \eqref{eq: effective domain phi_n} and the relaxation of the minimum problems \eqref{eq: relaxed MM scheme}, is that  the Wasserstein distance term in \eqref{eq: MM scheme along sequence} can be replaced by a simple `particle distance' corresponding to the optimal transport between discrete measures $\frac{1}{n(\tau)}\sum_{i=1}^{n(\tau)}{\delta_{y_i}}$. 

We refer the reader to Definition \ref{def: KDE-MM-Scheme} and the beginning of Theorem \ref{thm: KDE-MM-Scheme consistency} for detailed instructions on how to carry out the KDE-MM-Scheme for the diffusion equation \eqref{eq: diffusion equation intro} with $q\in(1, +\infty)$; 
if $q=2$, the \textsf{KDE-MM-Scheme} associated with a $\Gamma$-KDE-Approximation $(\phi_n)_n$ of $\phi$ consists in successively solving the relaxed minimum problems 
\begin{equation}\label{eq: KDE-MM-Scheme intro}
\Psi(\tau, Y_\tau^{m-1}, Y_\tau^m) \ \leq \ \inf_{Z=(z_1, ..., z_{n(\tau)})}{\Psi(\tau, Y_\tau^{m-1}, Z)} \ + \ \gamma_\tau \quad\quad  (m\in\mathbb{N}, \ \gamma_\tau>0)
\end{equation}
with
\begin{equation}\label{eq: KDE-MM-Scheme intro 2}
\Psi(\tau, Y , Z) \ := \ \phi_{n(\tau)}\Big(\frac{1}{n(\tau)}\sum_{i=1}^{n(\tau)}{\K_{h(n(\tau))}(\cdot-z_i)\mathcal{L}^d}\Big) \ + \ \frac{1}{2\tau}\sum_{i=1}^{n(\tau)}{\frac{|z_i-y_i|^2}{n(\tau)}}
\end{equation}
for $Y:=\big(y_1,...,y_{n(\tau)}\big), \ Z:=\big(z_1, ..., z_{n(\tau)}\big),  \ y_i, \ z_i\in\mathbb{R}^d$. 
The initial data for the KDE-MM-Scheme are given by 
\begin{equation}\label{eq: KDE-MM-Scheme intro 3}
Y_\tau^0:=(X_1, ..., X_{n(\tau)}), 
\end{equation}
where $X_1, ..., X_{n(\tau)}$ is an i.i.d. sample from some initial probability measure $\mu^0\in\{\phi<+\infty\}$. We let $\tau\downarrow0$ and $n(\tau)\uparrow+\infty$ simultaneously, studying the limiting behaviour of the piecewise constant interpolations \eqref{eq: piecewise constant interpolation} of the probability measures
\begin{equation*}
\mu_{\tau}^m \ := \ \frac{1}{n(\tau)}\sum_{i=1}^{n(\tau)}{\K_{h(n(\tau))}(\cdot-y_{i,\tau}^m)}\mathcal{L}^d, \quad\quad m\in\mathbb{N}_0,
\end{equation*}
associated with initial data \eqref{eq: KDE-MM-Scheme intro 3} and solutions $Y_\tau^m:=\big(y_{1,\tau}^m, ..., y_{n(\tau), \tau}^m\big)$ to the scheme \eqref{eq: KDE-MM-Scheme intro}, \eqref{eq: KDE-MM-Scheme intro 2}. 
\begin{itemize}
\item Let $\phi_n: \mathcal{P}_2(\mathbb{R}^d)\to(-\infty, +\infty], \ n\in\mathbb{N},$ be a $\Gamma$-KDE-Approximation of $\phi$ in $(\mathcal{P}_2(\mathbb{R}^d), \mathcal{W}_2)$. 
Assuming a natural coercivity condition and a natural chain rule, there exists a sequence $(n_\tau)_{\tau>0}, \ n_\tau\in\mathbb{N},$ 
such that the KDE-MM-Scheme \eqref{eq: KDE-MM-Scheme intro}, \eqref{eq: KDE-MM-Scheme intro 2} yields solutions to \eqref{eq: differential inclusion intro} and \eqref{eq: ede intro} (continuous-time gradient flow of $\phi$) with probability $1$ whenever the sample sizes $n(\tau)\geq n_\tau$, the sequence of initial data $Y_\tau^0$ is given by \eqref{eq: KDE-MM-Scheme intro 3} and an i.i.d. sample from some initial probability measure $\mu^0\in\{\phi<+\infty\}$ and the order of the error term $\gamma_\tau$ is $o(\tau)$. 

The sequence $(n_\tau)_\tau$ only depends on $(\phi_n)_n$ and $\phi$; it is completely independent of initial data $\mu^0, \ Y_\tau^0$ and of solutions $Y_\tau^m$ to the scheme. 

There is a general condition quantifying such appropriate correlations between time step sizes $\tau$ and sample sizes $n=n(\tau)$. 

(see Theorem \ref{thm: KDE-MM-Scheme consistency}) 
\end{itemize}

The reader is reminded that the differential equation \eqref{eq: differential inclusion intro} is a weak reformulation of \eqref{eq: diffusion equation intro}, $q=2$, in the space of probability measures. The strong consistency statement / convergence statement from Theorem \ref{thm: KDE-MM-Scheme consistency} includes the case that $q\neq 2$ in \eqref{eq: diffusion equation intro} and a non-uniform distribution of the error term in \eqref{eq: KDE-MM-Scheme intro}. Extensions of the theory are treated in Remarks \ref{rem: Gamma KDE Approximation variation} and \ref{rem: further relaxations}. 

To illustrate our theory of the KDE-MM-Scheme, we apply it to the second order diffusion equation \eqref{eq: second order diffusion equation} with no-flux boundary condition \eqref{eq: second order diffusion equation 2}. 
\begin{itemize}
\item We define a $\Gamma$-KDE-Approximation of the energy functional \eqref{eq: energy functional intro} so that the strong consistency statement / convergence statement from Theorem \ref{thm: KDE-MM-Scheme consistency} applies and the corresponding KDE-MM-Scheme is a sound approximation scheme for \eqref{eq: second order diffusion equation}, \eqref{eq: second order diffusion equation 2}. We exemplify the selection of appropriate correlations $\tau\mapsto n(\tau)$ between time step sizes and parameters of the $\Gamma$-KDE-Approximation, giving concrete examples of suitable choices $n=n(\tau)$. 

(see Proposition \ref{prop: Gamma KDE} and Theorem \ref{thm: selection of parameters}) 
\end{itemize}

Our assumptions on $F,V,W$ and $\Omega$ are of a quite general nature covering not only frequent examples such as the linear Fokker Planck equation with nonlocal term ($F(u)=u\log u$) or the porous medium / fast diffusion equation ($F(u)=\frac{1}{m-1}u^m$) but also general second order equations including the case that the energy functional $\phi$ is \textit{not} displacement convex in $(\mathcal{P}_2(\mathbb{R}^d), \mathcal{W}_2)$. 

A careful analysis of the relaxed minimum problems \eqref{eq: KDE-MM-Scheme intro} reveals particularities of the KDE-MM-Scheme for \eqref{eq: second order diffusion equation}, \eqref{eq: second order diffusion equation 2}. 

\begin{itemize}
\item We may restrict \eqref{eq: KDE-MM-Scheme intro} to \textit{finite} subsets $\mathcal{S}_{\omega(\tau)}$ of $\Omega$, only searching for $Y_\tau^m=(y_{1,\tau}^m, ..., y_{n(\tau),\tau}^m)$, $m\in\mathbb{N}$, such that $y_{i,\tau}^m\in\mathcal{S}_{\omega(\tau)}$ and 
\begin{equation*}
\Psi(\tau, Y_\tau^{m-1}, Y_\tau^m) \ \leq \ \Psi(\tau, Y_\tau^{m-1}, Z) \ + \ \gamma_\tau
\end{equation*}
for all $Z=(z_1, ..., z_{n(\tau)}), \ z_i\in\mathcal{S}_{\omega(\tau)}$. Appropriate sets $\mathcal{S}_{\omega(\tau)}$ can be exactly determined so that the resultant scheme functions as an approximation scheme for \eqref{eq: second order diffusion equation}, \eqref{eq: second order diffusion equation 2}.

A particular direct consequence thereof is that for every $\tau >0$, only a \textit{finite} number of minimization steps \eqref{eq: KDE-MM-Scheme intro} is necessary. 

(see Remark \ref{rem: example simplifications KDE-MM-Scheme})

\item According to \eqref{eq: KDE-MM-Scheme intro 2}, the KDE-MM-Scheme uses a simple `particle distance' instead of a Wasserstein distance; the potential energy and the interaction energy can be simplified in a similar way. 

(see Remark \ref{rem: example simplifications KDE-MM-Scheme})

\item If the initial probability density is uniformly bounded and we aim at finding a uniformly bounded solution to \eqref{eq: second order diffusion equation}, \eqref{eq: second order diffusion equation 2}, it is possible to take account of this extra condition a priori, modifying the KDE-MM-Scheme and the selection of the parameters $n=n(\tau)$ correspondingly. 

(see Remark \ref{rem: uniformly bounded solution})
\end{itemize}

The KDE-MM-Scheme for a second order diffusion equation on the \textit{unbounded domain} $\Omega=\mathbb{R}^d$ is treated in Remarks \ref{rem: example weak Gamma KDE} ($\Gamma$-KDE-Approximation) and \ref{rem: selection unbounded domain} (selection of parameters). 

Furthermore, it is noteworthy that the concept of $\Gamma$-KDE-Approximation and KDE-MM-Scheme is well suited for \textit{fourth order examples} of \eqref{eq: diffusion equation intro}, too, see Remarks \ref{rem: Gamma KDE fourth order} and \ref{rem: fourth order}. 

The paper's purpose is to provide rigorous mathematical proofs substantiating good reasons for performing the KDE-MM-Scheme as variational approximation technique for \eqref{eq: diffusion equation intro}. 

An in-depth investigation of the literature on computational approaches to \eqref{eq: diffusion equation intro} will show that our approximation scheme is novel and unites the advantages of different approaches, making us optimistic about a practical implementation of the KDE-MM-Scheme, see Section \ref{sec: literature}. Such a process always involves a second stage besides the mathematical theory, in which part of the mathematical accuracy is carefully sacrificed for computational feasibility and cost economy. Corresponding further simplifications of the KDE-MM-Scheme are beyond the scope of this introductory paper. 

\textsf{Plan of the paper.} Section \ref{sec: literature} offers a thorough comparison of our KDE-MM-Scheme with existing approximation schemes for \eqref{eq: diffusion equation intro} from the literature. Sections \ref{sec: 2.1} and \ref{sec: 2.2} provide detailed information on \eqref{eq: diffusion equation intro} as steepest descent motion with regard to a Wasserstein distance and a general mathematical theory of a Minimizing Movement approach to \eqref{eq: diffusion equation intro} under the occurrence of $\Gamma$-perturbations of the energy functional. Section \ref{sec: 2.3} deals with uniform convergence rates for Kernel Density Estimators. The concept of $\Gamma$-KDE-Approximation and KDE-MM-Scheme is explicated in Section \ref{sec: 3.1}. Therein a general convergence statement / strong consistency statement is proved, showing that the KDE-MM-Scheme is a sound approximation scheme for gradient flows \eqref{eq: diffusion equation intro} in the space of probability measures. In Sections \ref{sec: 3.2} and \ref{sec: 3.3}, the KDE-MM-Scheme is applied to a wide class of second order diffusion equations; appropriate correlations between time step sizes and parameters are precisely quantified.

\section{Numerical Approaches from the Literature}\label{sec: literature}

The literature offers diverse numerical approaches to the diffusion equation
\begin{equation}\label{eq: diffusion equation section 2}
\partial_t u  - \nabla\cdot\Big(u\nabla\frac{\delta\phi}{\delta u}(u)\Big) \ = \ 0, \quad\quad u\geq0, 
\end{equation}
built on the structure of solutions as a gradient flow with regard to the energy functional $\phi: \mathcal{P}_2(\mathbb{R}^d)\to(-\infty, +\infty]$ and the $2$-Wasserstein distance $\mathcal{W}_2$, which is defined as
\begin{equation*}
\mathcal{W}_2(\mu_1, \mu_2)^2 := \min_{\gamma\in\Gamma(\mu_1,\mu_2)}\int_{\mathbb{R}^d\times\mathbb{R}^d}{|x-y|^2\mathrm{d}\gamma}, \ \ \ \mu_i\in\mathcal{P}_2(\mathbb{R}^d), 
\end{equation*}
with $\Gamma(\mu_1, \mu_2)$ being the set of Borel probability measures on $\mathbb{R}^d\times\mathbb{R}^d$ whose first and second marginals coincide with $\mu_1$ and $\mu_2$ respectively (see e.g. \cite{Villani03, Villani09} for a detailed account of the theory of Optimal Transport and Wasserstein distances). Unlike the KDE-MM-Scheme, most of these approximation schemes lack mathematical proofs of convergence / consistency statements or focus on the numerical simplification of a single step in the JKO scheme
\begin{equation}\label{eq: JKO}
\mu_\tau^m \quad\text{ is a minimizer for }\quad \phi(\cdot) \ + \ \frac{1}{2\tau}\mathcal{W}_2(\cdot, \mu_\tau^{m-1})^2 
\end{equation}
not necessarily reproducing the dynamics of the whole scheme. Still, the mathematical ideas behind them are interesting and moreover, the corresponding articles provide a computational implementation of the schemes; it is worthwhile comparing them with our novel theory. 

A key issue is the computation of the Wasserstein distance in the JKO scheme, which we can circumvent by using a `particle distance' in $\mathbb{R}^d$ in the KDE-MM-Scheme, see \eqref{eq: KDE-MM-Scheme intro 2}. 

In \cite{benamou2016discretization}, the $\mathcal{W}_2$-term is tackled by first applying Brenier's Theorem \cite{brenier1991polar}, thus reformulating the minimum problem \eqref{eq: JKO} as a search for a convex function $g_\tau^m: \mathbb{R}^d\to \mathbb{R}$ solving
\begin{equation}\label{eq: brenier}
\min\Big\{\phi(\nabla g_{\#}\mu_\tau^{m-1}) + \frac{1}{2\tau} \int_{\mathbb{R}^d}{|x-\nabla g(x)|^2\mathrm{d}\mu_\tau^{m-1}} \ \Big| \ g:\mathbb{R}^d\to\mathbb{R} \text{ convex}\Big\}. 
\end{equation}
In a second stage, \eqref{eq: brenier} is replaced by a minimum problem associated with a finite subset $\{x_j\}_j\subset\mathbb{R}^d$, involving a discrete probability measure $\sum_j{a_j\delta_{x_j}}$ in place of $\mu_\tau^{m-1}$ and the notion of a discrete counterpart of a convex function and its subdifferential defined on $\{x_j\}_j$ only. Supposing that the density $u_\tau^{m-1}$ of $\mu_\tau^{m-1}=u_\tau^{m-1}\mathcal{L}^d$ is bounded from above and from below by positive constants and that $\mu_\tau^{m-1}$ is approximated by discrete probability measures, the authors can finally construct a minimizing sequence to the minimum problem \eqref{eq: JKO} for a wide class of energy functionals $\phi$. Introducing Laguerre cells as discrete subdifferentials, they use techniques from computational geometry for the computational implementation of their scheme. Also the authors of \cite{mokrov2021large} start out with the minimum problem \eqref{eq: brenier} but they restrict it to convex functions $g_\theta$ generated by a neural network \cite{amos2017input}. They estimate the energy and distance terms using i.i.d. samples $X_1, ..., X_n$ from $\mu_\tau^{m-1}$ and $\nabla g_\theta(X_1), ... , \nabla g_\theta(X_n)$ from $\nabla g_{\theta_{\#}}\mu_\tau^{m-1}$ with the aim of finding the optimum parameters for the resultant minimum problem associated with the chosen neural network. Another way of dealing with the $\mathcal{W}_2$-term is presented in \cite{benamou2016augmented} in which the Benamou-Brenier formula \cite{Benamou-Brenier00}
\begin{equation}\label{eq: benamou}
\mathcal{W}_2(\rho_0\mathcal{L}^d, \rho_1\mathcal{L}^d)^2 = \inf\Big\{\int_{0}^1{\int_{\mathbb{R}^d}{\rho(t,x)|v(t,x)|^2\mathrm{d}x}\mathrm{d}t}\ \Big| \ \rho_0\stackrel{(\rho, v)}{\rightsquigarrow}\rho_1\Big\}, 
\end{equation}
$\rho_0\stackrel{(\rho, v)}{\rightsquigarrow}\rho_1 \ :\Leftrightarrow \ \partial_t\rho + \nabla\cdot(\rho v) = 0, \ \rho(i, \cdot)=\rho_i \ (i=0,1),$ is substituted in \eqref{eq: JKO}. Transforming the constraint $\rho_0\stackrel{(\rho, v)}{\rightsquigarrow}\rho_1$ by means of Lagrange multipliers, the authors obtain a saddle-point problem which they approach by a version of the augmented Lagrangian algorithm ALG2 from \cite{fortin2000augmented} combined with a finite element method. A different approach also based on the Benamou-Brenier formula \eqref{eq: benamou} and a finite element method can be found in \cite{burger2009mixed}. 

Simplifying the energy term and the minimum problem \eqref{eq: JKO} as a whole is another key issue besides the $\mathcal{W}_2$-term. In \cite{hwang2021deep}, it is proposed to construct probability density functions by normalizing nonnegative functions that are generated by a feedforward neural network and to restrict \eqref{eq: JKO} to probability measures with such densities. The authors are motivated by Thm. 3 in \cite{hornik1991approximation} saying that if $G\in\mathrm{C}^1(\mathbb{R})$ is bounded and nonconstant and $X\subset\mathbb{R}^d$ is compact, then the set of functions
\begin{equation}\label{eq: universal approximation nn}
\bigcup_{n\in\mathbb{N}}\Big\{X \ni x\mapsto \sum_{j=1}^{n}{\beta_j G(a_j^{T}x-c_j)} \ \Big| \ \beta_j\in \mathbb{R}, a_j\in\mathbb{R}^d, c_j\in\mathbb{R} \Big\}
\end{equation}
is dense in $\mathrm{C}^1(X)$; this theorem belongs to the universal approximation theory of feedforward neural networks, which has its origin in \cite{cybenko1989approximation, hornik1989multilayer, hornik1990universal}. In \cite{hwang2021deep}, no rule is provided, however, stating how to choose a neural network from \eqref{eq: universal approximation nn}, i.e. how to fix the number of its hidden units (corresponds to the number $n$ in \eqref{eq: universal approximation nn}) when approaching \eqref{eq: JKO}. Furthermore, the authors admit the drawback of their approach based on \eqref{eq: universal approximation nn} that there is ``no direct easy way to evaluate'' the $2$-Wasserstein distance between the measures they construct. 

Our KDE-MM-Scheme \eqref{eq: KDE-MM-Scheme intro}-\eqref{eq: KDE-MM-Scheme intro 2} offers both a comparatively easy computation of the distance term and a significant simplification of the whole minimization procedure by restricting it to clearly structured probability measures of the form
$\frac{1}{n(\tau)}\sum_{i=1}^{n(\tau)}{\K_{h(n(\tau))}(\cdot - y_i)}\mathcal{L}^d$. As outlined in Section \ref{sec: KDE-MM-Scheme intro}, our motivation for bringing such measures into the focus of our approach lies in the KDE theory, but we note that the corresponding probability densities can also be regarded as output functions of a specific feedforward neural network. Obviously having a different architecture than the neural networks from \eqref{eq: universal approximation nn}, it allows of an interpretation of the KDE-MM-Scheme not only as discrete-time steepest descent motion in $\mathcal{P}_2(\mathbb{R}^d)$ but also as discrete-time steepest descent \textit{particle} motion, cf. \eqref{eq: KDE-MM-Scheme intro}-\eqref{eq: KDE-MM-Scheme intro 2}. 

The following approaches from the literature are aimed at translating \eqref{eq: diffusion equation section 2} into a scheme for moving particles. In \cite{matthes2014convergence, matthes2017convergent, junge2017fully, carrillo2018lagrangian}, they start out with the Lagrangian formulation 
\begin{equation*}
\partial_t \mathbb{X}_t \ = \ v_t\circ\mathbb{X}_t, \quad \quad v_t \ = \ -\nabla\frac{\delta\phi}{\delta u}(u_t), 
\end{equation*}
of \eqref{eq: diffusion equation section 2}, which, by \cite{evans2005diffeomorphisms}, can be transformed into an $\mathrm{L}^2$-gradient flow equation for the Lagrangian map $\mathbb{X}$ related to $u$ through `pushing forward' a reference measure. This $\mathrm{L}^2$-gradient flow is tackled by a suitable Minimizing Movement scheme, in turn simplified by spatial discretizations. 
If the dimension $d=1$, the proof of certain convergence statements is possible \cite{matthes2014convergence, matthes2017convergent}. The particle methods presented in \cite{leclerc2020lagrangian} and \cite{carrillo2019blob} each consist in setting up ODE systems 
\begin{equation}\label{eq: ODE system particles}
a_j \mathrm{x}_{j}'(t) \ = \ -\nabla_{x_j} f_{\epsilon}^{(N)}(\mathrm{x}_{1}(t), ..., \mathrm{x}_{N}(t))
\end{equation}
with $f_{\epsilon}^{(N)}(x_1, ..., x_N):=\phi_{\epsilon}\Big(\sum_{j=1}^{N}{a_j\delta_{x_j}}\Big), \ \sum_{j=1}^{N}{a_j}=1,$ for moving particles $\mathrm{x}_{1}, ..., \mathrm{x}_{N},$ associated with $\Gamma$-perturbations $\phi_\epsilon$ of $\phi$ having finite energy $\phi_\epsilon(\sum_{j=1}^{N}{a_j\delta_{\mathrm{x}_j}}) < +\infty$ at discrete probability measures. Solutions of \eqref{eq: ODE system particles} are assigned discrete measures $\mu_{N, \epsilon}(t):=\sum_{j=1}^{N}{a_j\delta_{\mathrm{x}_{j}(t)}}$. In \cite{leclerc2020lagrangian}, $\phi_\epsilon$ is defined as the Moreau-Yosida approximation
\begin{equation*}
\phi_\epsilon(\mu) \ := \ \inf_\nu \Big[\phi(\nu) + \frac{1}{2\epsilon}\mathcal{W}_2(\nu, \mu)^2\Big]
\end{equation*}
of e.g. $\phi(\mu):= \begin{cases}\int{u\log(u)\mathrm{d}x} \text{ if } \mu=u\mathcal{L}^d\llcorner\Omega, \\ +\infty \text{ else, }\end{cases}$ and if $d=1$, $\Omega$ is a bounded interval and $a_j\equiv \frac{1}{N}$ in \eqref{eq: ODE system particles}, the authors can precisely quantify the relation between the particle numbers $N$ and the parameters $\epsilon=\epsilon_N$ (depending on initial data) to obtain convergence (up to a subsequence) of $\mu_{N, \epsilon_N}(\cdot)$ to a solution of the heat equation as $N\uparrow+\infty$ and $\epsilon_N\downarrow0$. The numerical computation of $f_{\epsilon}^{(N)}$ is based on the relation between semi-discrete optimal transport and the concept of Laguerre cells and on tools from computational geometry, see \cite{kitagawa2019convergence}. The authors of \cite{carrillo2019blob} define 
\begin{equation*}
\phi_\epsilon(\mu):=\frac{1}{m-1}\int{((\xi_\epsilon\ast\xi_\epsilon)\ast\mu)^{m-1}\mathrm{d}\mu}, \quad\quad \xi_\epsilon:=\epsilon^{-d}\xi(\cdot/\epsilon), 
\end{equation*}
for $\phi(\mu):=\begin{cases} \frac{1}{m-1}\int{u^m} \text{ if } \mu=u\mathcal{L}^d, \\ +\infty \text{ else,}\end{cases} m\geq2$, and a suitable function $\xi$ (e.g. Gaussian function), proving that $\mu_{N, \epsilon}$ defined as above is the unique weak solution to the diffusion equation
\begin{equation}\label{eq: diffusion equation epsilon section 2}
\partial_t u_\epsilon  - \nabla\cdot\Big(u_\epsilon\nabla\frac{\delta\phi_{\epsilon}}{\delta u}(u_\epsilon)\Big) \ = \ 0
\end{equation}
with initial datum $\sum_{j=1}^{N}{a_j\delta_{\mathrm{x}_j(0)}}$ (``particles remain particles''). 
If $m=2$ and under a specific assumption on the discrete initial data, the convergence of such `particle solutions' $\mu_{N_\epsilon, \epsilon}$ of \eqref{eq: diffusion equation epsilon section 2} to a weak solution of the porous medium equation as $\epsilon\downarrow0$ can be proved \cite{craig2022blob} by means of the Serfaty-Sandier approach \cite{sandier2004gamma, serfaty2011gamma}, the `flow interchange method' from \cite{matthes2009family} and `contraction estimates' from \cite{AGS08}, using the $\lambda_\epsilon$-convexity of $\phi_\epsilon$ along generalized geodesics. 

In terms of the Minimizing Movement scheme 
\begin{equation}\label{eq: JKO epsilon}
\mu_{\tau, \epsilon}^m \quad\text{ is a minimizer for }\quad \phi_\epsilon(\cdot) \ + \ \frac{1}{2\tau}\mathcal{W}_2(\cdot, \mu_{\tau, \epsilon}^{m-1})^2 \quad\quad (m\in\mathbb{N}),
\end{equation}
associated with $\Gamma$-perturbations $\phi_\epsilon$ of $\phi$, considerations regarding the limit behaviour of \eqref{eq: diffusion equation epsilon section 2} as $\epsilon\downarrow0$ relate to first letting the time step size $\tau\downarrow0$ in \eqref{eq: JKO epsilon} for a fixed parameter $\epsilon>0$ and only then letting $\epsilon\downarrow0$. By comparison, our analysis of the gradient flow motion along a sequence of $\Gamma$-converging energy functionals, which is built on \cite{fleissner2016gamma}, focuses on a `joint discrete-time steepest descent motion', i.e. we let the time step sizes $\tau$ and parameters $\epsilon$ simultaneously go to $0$, thus establishing a direct connection with \eqref{eq: diffusion equation section 2} 
(see Section \ref{sec: 2.2} and Section \ref{sec: 1.3.1} for an overview). The limit $\epsilon\downarrow0$ in \eqref{eq: diffusion equation epsilon section 2}, by contrast, is in general \textit{not} related to \eqref{eq: diffusion equation section 2}, cf. Sect. 1 in \cite{fleissner2016gamma}. The special cases in which solutions of \eqref{eq: diffusion equation epsilon section 2} do converge (up to a subsequence) to a solution of \eqref{eq: diffusion equation section 2} as $\epsilon\downarrow0$ correspond to cases in which our joint discrete-time steepest descent motion yields solutions of \eqref{eq: diffusion equation section 2} for any choice $\epsilon=\epsilon(\tau)\to0$, cf. Sect. 5 in \cite{fleissner2016gamma} and Remark \ref{rem: main ass}\ref{rem: main ass iii}. 

We end our account of literature with a reference to the concept of `entropic regularization' from \cite{peyre2015entropic, carlier2017convergence}; therein the JKO scheme \eqref{eq: JKO} is replaced by minimum problems
\begin{equation}\label{eq: JKO entropic}
\mu_\tau^m \quad\text{ is a minimizer for }\quad \phi(\cdot) \ + \ \frac{1}{2\tau}\mathcal{W}_{2,\epsilon}(\cdot, \mu_\tau^{m-1})^2 
\end{equation}
associated with the non-distance $\mathcal{W}_{2,\epsilon}, \ \epsilon >0,$ defined as
\begin{equation*}
\mathcal{W}_{2,\epsilon}(\mu_1, \mu_2)^2 := \inf_{\gamma\in\Gamma(\mu_1,\mu_2)}\Big[\int_{\mathbb{R}^d\times\mathbb{R}^d}{|x-y|^2\mathrm{d}\gamma} \ + \ \epsilon\cdot\mathcal{H}(\gamma)\Big], 
\end{equation*}
with $\mathcal{H}(\gamma):=\begin{cases}\int_{\mathbb{R}^d\times\mathbb{R}^d}{w(x)\log(w(x))\mathrm{d}x} \text{ if } \gamma=w\mathcal{L}^{2d}, \\ +\infty\text{ else.}\end{cases}$ Following the classical `JKO proof strategy' from \cite{jordan1998variational}, the authors of \cite{carlier2017convergence} show that under certain assumptions on $\phi$ and the relation $\epsilon\sim\tau$, also the `entropic scheme' \eqref{eq: JKO entropic} yields solutions to \eqref{eq: diffusion equation section 2}. In practice, their numerical approach consists in a spatial grid discretization $\{x_j\}_{j=1}^{N}$, a simplification of \eqref{eq: JKO entropic} restricted to discrete probability measures $\sum_{j=1}^{N}{a_j\delta_{x_j}}$ on the grid and an application of a modified version of Sinkhorn's algorithm (cf. Sect. 4.2 in \cite{peyre2019computational}).

\section{Gradient Flows, Minimizing Movements and Kernel Density Estimation}\label{sec: 2}
\subsection{Gradient Flows in $(\mathcal{P}_p(\mathbb{R}^d), \mathcal{W}_p)$}\label{sec: 2.1}
A gradient flow in a metric space is typically characterized by an energy dissipation inequality; the abstract notion thereof originates from \cite{DeGiorgi-Marino-Tosques80} with further developments in \cite{Degiovanni-Marino-Tosques85, Marino-Saccon-Tosques89} and \cite{AGS08}. Our analysis in Sections \ref{sec: 2.2} and \ref{sec: 3.1}-\ref{sec: 3.3} is based on such characterization of gradient flows in  $(\mathcal{P}_p(\mathbb{R}^d), \mathcal{W}_p)$. This section provides the relevant definitions (locally absolutely continuous curve, metric derivative, continuity equation, local and relaxed slope, strong and limiting subdifferential) and serves to bridge the gap between the diffusion equation \eqref{eq: diffusion equation intro} and its associated energy dissipation (in)equality by a suitable chain rule, see Proposition \ref{prop: 2.1} below. 

Let $p\in(1,+\infty)$ and $\mathcal{P}_p(\mathbb{R}^d)$ be the space of Borel probability measures with finite moments of order $p$ (i.e. $\int_{\mathbb{R}^d}{|x|^p\mathrm{d} \mu} < +\infty$), endowed with the $p$-Wasserstein distance $\mathcal{W}_p$, 
\begin{equation*}
\mathcal{W}_p(\mu_1, \mu_2)^p := \min_{\gamma\in\Gamma(\mu_1,\mu_2)}\int_{\mathbb{R}^d\times\mathbb{R}^d}{|x-y|^p\mathrm{d}\gamma}, \ \ \ \mu_i\in\mathcal{P}_p(\mathbb{R}^d), 
\end{equation*}
with $\Gamma(\mu_1, \mu_2)$ being the set of Borel probability measures on $\mathbb{R}^d\times\mathbb{R}^d$ whose first and second marginals coincide with $\mu_1$ and $\mu_2$ respectively (see e.g. \cite{Villani03, Villani09} for a detailed account of the theory of Optimal Transport and Wasserstein distances). Moreover, let $q\in(1,+\infty)$ be the conjugate exponent of $p$, and 
\begin{equation}\label{eq: j_q}
j_q: \mathbb{R}^d\to\mathbb{R}^d, \quad  j_q(v) := \begin{cases} |v|^{q-2}v & \text{ if } v\neq 0, \\ 0 & \text{ if } v = 0. \end{cases}
\end{equation}

\begin{definition}[Locally absolutely continuous curve]\label{def: absolutely continuous}
We say that a curve $\nu: \ [0, +\infty) \rightarrow \mathcal{P}_p(\mathbb{R}^d)$ is locally absolutely continuous in $(\mathcal{P}_p(\mathbb{R}^d), \mathcal{W}_p)$ 
if there exists $m\in \mathrm{L}_{\mathrm{loc}}^1(0,+\infty)$ such that 
\begin{equation*}
\mathcal{W}_p(\nu(s),\nu(t)) \leq \int^{t}_{s}{m(r) \mathrm{d}r} \quad \text{for all } 0 \leq s\leq t < +\infty. 
\end{equation*}
 \end{definition}

If $(\nu_t)_{t\ge0}$ is a locally absolutely continuous curve in $(\mathcal{P}_p(\mathbb{R}^d), \mathcal{W}_p)$, the limit
\begin{equation*}
|\nu'|(t) := \mathop{\lim}_{s\to t} \frac{\mathcal{W}_p(\nu(s),\nu(t))}{|s-t|}
\end{equation*}
exists for $\mathcal{L}^1$-a.e. $t\in (0, +\infty)$, the \textit{metric derivative} $t \mapsto |\nu'|(t)$ belongs to $\mathrm{L}_{\mathrm{loc}}^1(0, +\infty)$ and is $\mathcal{L}^1$-a.e. the smallest admissible function $m$ in the definition above (cf. Thm. 1.1.2 in \cite{AGS08}). Moreover, there exists an essentially unique Borel vector field $w: [0, +\infty)\times\mathbb{R}^d\to\mathbb{R}^d$ satisfying both
\begin{equation}\label{eq: tangent 1}
w_t\in\mathrm{L}^p(\nu_t; \mathbb{R}^d), \quad\quad \|w_t\|_{\mathrm{L}^p(\nu_t; \mathbb{R}^d)} \ = \ |\nu'|(t) \quad \text{for } \mathcal{L}^1\text{-a.e. } t > 0, 
\end{equation}
and the \textit{continuity equation}
\begin{equation}\label{eq: tangent 2}
\partial_t\nu_t  + \nabla\cdot(w_t\nu_t) \ = \ 0,
\end{equation}
in the distributional sense, i.e. 
\begin{equation*}
\int_0^{+\infty}{\int_{\mathbb{R}^d}{(\partial_t\xi(t,x) + \langle\nabla\xi(t,x), w(t,x)\rangle) \mathrm{d}\nu_t(x)}\mathrm{d}t} \ = \ 0 
\end{equation*}
for all $\xi\in\mathrm{C}^{\infty}_{c}((0,+\infty)\times \mathbb{R}^d)$ (cf. Thm. 8.3.1 and Prop. 8.4.5 in \cite{AGS08}). We refer to $w$ satisfying \eqref{eq: tangent 1} and \eqref{eq: tangent 2} as ``tangent'' vector field associated with the curve $(\nu_t)_{t\ge0}$.

An abstraction of the modulus of the gradient to the general metric and nonsmooth setting and a subdifferential calculus in $(\mathcal{P}_p(\mathbb{R}^d), \mathcal{W}_p)$ are further basic ingredients for the theory of gradient flows with regard to an energy functional $\phi: \mathcal{P}_p(\mathbb{R}^d)\to(-\infty, +\infty]$ and the $p$-Wasserstein distance $\mathcal{W}_p$.

We assume that 
\begin{enumerate}[label=($\phi$\arabic*)]
\item 
$\phi(\mu)<+\infty$ implies $\mu\ll\mathcal{L}^d$, \label{ass: phi1}
\item there exist $A, B > 0, \ \mu_\star\in\mathcal{P}_p(\mathbb{R}^d)$ s.t. 
$\phi(\cdot) \ \geq \ -A-B\mathcal{W}_p(\cdot, \mu_\star)^p$, \label{ass: phi2}
\item $\phi$ is lower semicontinuous in $(\mathcal{P}_p(\mathbb{R}^d), \mathcal{W}_p)$, 
\label{ass: phi3}
\item $\mathcal{W}_p$-bounded subsets of sublevel sets of $\phi$ are relatively compact, i.e. 
\begin{equation*}
\sup_{n\in\mathbb{N}}\{\phi(\mu_n), \mathcal{W}_p(\mu_n, \mu_\star)\} < +\infty \quad \Rightarrow \quad \exists n_k\uparrow+\infty, \mu: \ \mathcal{W}_p(\mu_{n_k}, \mu) \to 0. 
\end{equation*}
\label{ass: phi4}
\end{enumerate}

\begin{definition}[local and relaxed slope]\label{def: slopes}
 The local slope $|\partial\phi|$ of $\phi$ at a probability measure $\mu\in\{\phi<+\infty\}$ is defined as
\begin{equation*}
|\partial\phi|(\mu):= \mathop{\limsup}_{\mathcal{W}_p(\nu,\mu) \to 0} \frac{(\phi(\mu)-\phi(\nu))^+}{\mathcal{W}_p(\mu,\nu)}. 
\end{equation*}
The relaxed slope $|\partial^-\phi|$ is a slight modification of the lower semicontinuous envelope of the local slope, i.e.
\begin{equation*}
|\partial^-\phi|(\mu):= \inf\{\liminf_{n\to+\infty} |\partial\phi|(\mu_n) \ : \  \lim_{n\to+\infty}\mathcal{W}_p(\mu_n, \mu) = 0, \ \sup_n \phi(\mu_n)< +\infty\}. 
\end{equation*}
\end{definition}

\begin{definition}[strong and limiting subdifferential]\label{def: subdifferentials}
The strong subdifferential $\partial_s\phi(\mu)$ of $\phi$ at $\mu\in\{\phi<+\infty\}$ is defined as the set of vector fields $\zeta\in\mathrm{L}^q(\mu; \mathbb{R}^d)$ satisfying 
\begin{equation*}
\phi(T_{\#}\mu) - \phi(\mu) \ \geq \ \int_{\mathbb{R}^d}{\langle \zeta(x), T(x)-x\rangle\mathrm{d}\mu(x)} \ + \ o(\|T-\mathrm{id}\|_{\mathrm{L}^p(\mu; \mathbb{R}^d)})
\end{equation*}
for every $T\in\mathrm{L}^p(\mu; \mathbb{R}^d)$. 

The limiting subdifferential $\partial_l\phi(\mu)$ of $\phi$ at $\mu\in\{\phi<+\infty\}$ is defined as the set of vector fields $\zeta\in\mathrm{L}^q(\mu; \mathbb{R}^d)$ for which there exist $\mu_n\stackrel{\mathcal{W}_p}{\longrightarrow}\mu$ and $\zeta_n\in\partial_s\phi(\mu_n) \ (n\in\mathbb{N})$ such that $\sup_n\Big\{\phi(\mu_n), \int_{\mathbb{R}^d}{|\zeta_n(x)|^q\mathrm{d}\mu_n(x)}\Big\} < +\infty$ and $\zeta_n\mu_n$ converges to $\zeta\mu$ in the distributional sense. 
\end{definition}

Whereas local and relaxed slopes play an important role in the abstract theory of gradient flows in general metric spaces (cf. \cite{DeGiorgi-Marino-Tosques80, Degiovanni-Marino-Tosques85, Marino-Saccon-Tosques89, AGS08}), the definitions of $\partial_s\phi$ and $\partial_l\phi$ translate the concept of the Fr\'echet subdifferential and its closure from a Banach space into a suitable notion in $(\mathcal{P}_p(\mathbb{R}^d), \mathcal{W}_p)$. Assuming $\phi(\mu):=\int{\mathcal{F}(x,u)\mathrm{d}x}$ or $\phi(\mu):=\int{\mathcal{F}(x,u, \nabla u)\mathrm{d}x}$ (for $\mu=u\mathcal{L}^d$), it is not difficult to see that Definition \ref{def: subdifferentials} of $\partial_s\phi(\mu)$ and $\partial_l\phi(\mu)$ generalizes the expression $\nabla\frac{\delta\phi}{\delta u}(u)$ from standard variational calculus for integral functionals to the nonsmooth setting in $(\mathcal{P}_p(\mathbb{R}^d), \mathcal{W}_p)$, cf. Lem. 10.4.1 in \cite{AGS08}. The heuristic principle that 
\begin{equation*}
\partial_l\phi(\mu) = \Big\{\nabla\frac{\delta\phi}{\delta u}(u)\Big\}
\end{equation*}
is further substantiated through the concrete computation of limiting subdifferentials, see Example \ref{ex: chain rule} of a general second order diffusion equation, and Sect. 5.3 in \cite{gianazza2009wasserstein} and Sect. 2.4 in \cite{matthes2009family} both dealing with fourth order examples of \eqref{eq: diffusion equation intro}. 
Definitions \ref{def: slopes} and \ref{def: subdifferentials} of relaxed slope and limiting subdifferential are ideally suited for a Minimizing Movement approach to a steepest descent with regard to a general energy functional $\phi$ and the $p$-Wasserstein distance. 

 \begin{proposition}[Gradient flow in  $(\mathcal{P}_p(\mathbb{R}^d), \mathcal{W}_p)$] \label{prop: 2.1} 
We assume \ref{ass: phi1}, \ref{ass: phi2}, \ref{ass: phi3}, \ref{ass: phi4} and that for every $\nu\in\{\phi<+\infty\}$ the limiting subdifferential $\partial_l\phi(\nu)$ contains at most one element to which we refer as $D_l\phi(\nu)(\cdot)$ if it exists. 

If $(\tau_k)_{k\in\mathbb{N}}, \ \tau_k\downarrow0,$ is a sequence of time step sizes, $\bar{\mu}_{\tau_k}$ are discrete solutions \eqref{eq: piecewise constant interpolation} of the Minimizing Movement scheme \eqref{eq: general MM scheme} associated with
\begin{equation}\label{eq: MM p}
\Phi(\tau, \mu, \nu) \ := \ \phi(\nu)+\frac{1}{p\tau^{p-1}}\mathcal{W}_p(\nu, \mu)^p
\end{equation}
and initial data $\bar{\mu}_{\tau_k}(0)\stackrel{\mathcal{W}_p}{\longrightarrow}\mu^0\in\{\phi<+\infty\}, \ \lim_{k\to+\infty}\phi(\bar{\mu}_{\tau_k}(0)) =  \phi(\mu^0)$, 
then there exist a subsequence of time step sizes $(\tau_{k_l})_{l\in\mathbb{N}}, \ \tau_{k_l}\downarrow0,$ and a curve $\mu: [0, +\infty)\to\mathcal{P}_p(\mathbb{R}^d)$ such that
\begin{equation*}
\lim_{l\to+\infty}\mathcal{W}_p(\bar{\mu}_{\tau_{k_l}}(t), \mu(t)) \ = \ 0 \quad \text{ for all } t\ge0. 
\end{equation*}
Every such limit curve $\mu$ is locally absolutely continuous in $(\mathcal{P}_p(\mathbb{R}^d), \mathcal{W}_p)$ and 
\begin{equation}\label{eq: energy inequality}
\phi(\mu(0)) - \phi(\mu(t)) \ \ge \ \frac{1}{q} \int^{t}_{0}{|\partial^-\phi|^q(\mu(r)) \mathrm{d}r} \ + \ \frac{1}{p} \int^{t}_{0}{|\mu'|^p(r) \mathrm{d}r}
\end{equation}
for all $t\ge0$. 

Assuming in turn $\mu$ is a locally absolutely continuous curve in $(\mathcal{P}_p(\mathbb{R}^d), \mathcal{W}_p)$ with tangent vector field $v$ and that $\phi \circ  \mu \in \mathrm{C}([0, +\infty))\cap \mathrm{W}_{\mathrm{loc}}^{1,1}((0, +\infty))$ with
\begin{equation}\label{eq: chain rule} 
\partial_l\phi(\mu(t))\neq\emptyset, \quad\quad (\phi\circ\mu)'(t) \ = \ \int_{\mathbb{R}^d}{\langle D_l\phi(\mu(t))(x), v_t(x)\rangle  \mathrm{d}\mu_t(x)}
\end{equation}
for $\mathcal{L}^1$-a.e. $t>0$, 
the following three statements \ref{st: a}, \ref{st: b} and \ref{st: c} are equivalent: 
\begin{enumerate}[label=(\alph*)]
\item $\mu$ satisfies the energy inequality \eqref{eq: energy inequality} for all $t\ge0$. \label{st: a}
\item $\mu$ satisfies the energy dissipation equality 
\begin{equation}\label{eq: ede}
\phi(\mu(s)) - \phi(\mu(t)) \ = \ \frac{1}{q} \int^{t}_{s}{|\partial^-\phi|^q(\mu(r)) \mathrm{d}r} \ + \ \frac{1}{p} \int^{t}_{s}{|\mu'|^p(r) \mathrm{d}r} 
\end{equation}
for all $0\leq s \leq t < +\infty$. \label{st: b}
\item $\mu$ satisfies both the differential equation
\begin{equation}\label{eq: differential inclusion}
v_t \ = \ -j_q(D_l\phi(\mu(t))) \quad\quad \mu_t\text{-a.e.}
\end{equation}
and 
\begin{equation}\label{eq: subdifferential slope equality}
|\partial^-\phi|(\mu(t)) = \|D_l\phi(\mu(t))\|_{\mathrm{L}^q(\mu_t; \mathbb{R}^d)}
\end{equation}
for $\mathcal{L}^1$-a.e. $t>0$.  \label{st: c}
\end{enumerate}
\end{proposition}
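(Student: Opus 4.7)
The plan is to treat the two halves of the statement separately: first establishing compactness and the energy inequality \eqref{eq: energy inequality} for the discrete Minimizing Movement, and then, under the chain rule \eqref{eq: chain rule}, proving the equivalence of \ref{st: a}, \ref{st: b} and \ref{st: c}.

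For the first half I would follow De Giorgi's variational approach. Testing the minimality of $\mu_\tau^m$ against $\mu_\tau^{m-1}$ in \eqref{eq: MM p} yields the telescoping bound $\phi(\mu_\tau^N) + \sum_{k=1}^{N} \frac{1}{p\tau^{p-1}}\mathcal{W}_p(\mu_\tau^k, \mu_\tau^{k-1})^p \leq \phi(\mu_\tau^0)$; combined with the coercivity \ref{ass: phi2} this produces uniform $(1/q)$-Hölder control of the piecewise constant interpolations and, via \ref{ass: phi4}, pointwise $\mathcal{W}_p$-relative compactness, so a Helly-type selection delivers the subsequential limit $\mu$. For \eqref{eq: energy inequality}, the crucial device is the \emph{De Giorgi variational interpolant}: for $t \in ((m-1)\tau, m\tau]$ and $\sigma = t-(m-1)\tau$, let $\tilde\mu_\tau(t)$ minimise $\phi(\cdot) + \frac{1}{p\sigma^{p-1}}\mathcal{W}_p(\cdot, \mu_\tau^{m-1})^p$. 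The first-order analysis of this problem yields $|\partial\phi|(\tilde\mu_\tau(t)) \leq \mathcal{W}_p(\tilde\mu_\tau(t),\mu_\tau^{m-1})^{p-1}/\sigma^{p-1}$ together with the envelope identity $\phi(\mu_\tau^{m-1})-\phi(\mu_\tau^m) \geq \frac{1}{p\tau^{p-1}}\mathcal{W}_p(\mu_\tau^m,\mu_\tau^{m-1})^p + \frac{1}{q}\int_{(m-1)\tau}^{m\tau} |\partial\phi|^q(\tilde\mu_\tau(t))\,dt$; summing over $m$ and passing to the liminf via lower semicontinuity of $|\partial^-\phi|$ and of the metric derivative then produces \eqref{eq: energy inequality}.

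For \ref{st: a}$\Rightarrow$\ref{st: c} I would exploit that the relaxed slope $|\partial^-\phi|$ is a strong upper gradient for $\phi$ along any locally absolutely continuous curve with bounded energy. Writing $s(r) := |\partial^-\phi|(\mu_r)$, $a(r) := \|D_l\phi(\mu_r)\|_{\mathrm{L}^q(\mu_r)}$ and $b(r):=|\mu'|(r)$, the upper-gradient inequality combined with the chain rule \eqref{eq: chain rule} yields $-\tfrac{d}{dr}(\phi\circ\mu)(r) \leq s(r)\,b(r) \leq \tfrac{1}{q}s(r)^q + \tfrac{1}{p}b(r)^p$ for $\mathcal{L}^1$-a.e. $r$. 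Integrating and comparing with \ref{st: a} forces both inequalities to be equalities a.e. Applying Hölder directly to the chain rule also gives $-\tfrac{d}{dr}(\phi\circ\mu) = -\int\langle D_l\phi, v_r\rangle d\mu_r \leq a(r)b(r)$, so the tightness $-\tfrac{d}{dr}(\phi\circ\mu) = s(r)b(r)$ together with the general bound $s \leq a$ (a consequence of Definition \ref{def: subdifferentials} and the local-slope estimate $|\partial\phi|(\nu) \leq \|\zeta\|_{\mathrm{L}^q(\nu)}$ for $\zeta \in \partial_s\phi(\nu)$) forces $s \equiv a$ wherever $b>0$, which is the slope equality \eqref{eq: subdifferential slope equality}. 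The equality case of Hölder then yields $v_r = -\lambda_r j_q(D_l\phi(\mu_r))$ with $\lambda_r \geq 0$, and the tight Young inequality fixes $\|v_r\|_{\mathrm{L}^p(\mu_r)} = a(r)^{q-1}$, hence $\lambda_r \equiv 1$ and \eqref{eq: differential inclusion} holds.

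The converse implications \ref{st: c}$\Rightarrow$\ref{st: b}$\Rightarrow$\ref{st: a} are routine: substituting $v_r = -j_q(D_l\phi(\mu_r))$ into \eqref{eq: chain rule} gives integrand $\|D_l\phi(\mu_r)\|_{\mathrm{L}^q(\mu_r)}^q = |\mu'|^p(r)$, so together with $\tfrac{1}{p}+\tfrac{1}{q}=1$ and \eqref{eq: subdifferential slope equality} the identity \eqref{eq: ede} holds for every $0 \leq s \leq t$, specialising at $s=0$ to \eqref{eq: energy inequality}. The main obstacles I anticipate are (i) the liminf passage from $|\partial\phi|^q(\tilde\mu_\tau)$ to $|\partial^-\phi|^q(\mu)$ in the first half, which requires carefully comparing the slope of the variational interpolant with the relaxed slope of its Helly limit, and (ii) establishing that $|\partial^-\phi|$ is a strong upper gradient along the limit curve in the presence of only the assumed chain rule, which rests on the abstract upper-gradient machinery of Chapters~1--2 of \cite{AGS08} adapted to the $\mathcal{W}_p$-setting and is the conceptually most delicate point.
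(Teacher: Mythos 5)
Your overall architecture matches the paper's: the first half is the standard De Giorgi variational-interpolant argument (the paper simply cites Prop.~2.2.3, Thm.~2.3.3 and Rem.~3.2.5 of \cite{AGS08} for it, and your sketch is exactly what those results do), and the second half closes a cyclic implication with the chain rule, H\"older and Young. But there is a genuine gap at the pivot of the second half: you invoke ``the general bound $s\le a$'', i.e.\ $|\partial^-\phi|(\mu_r)\le\|D_l\phi(\mu_r)\|_{\mathrm{L}^q(\mu_r)}$, and derive it from the local-slope estimate $|\partial\phi|(\nu)\le\|\zeta\|_{\mathrm{L}^q(\nu)}$ for $\zeta\in\partial_s\phi(\nu)$. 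This is the wrong inequality in two respects. First, it does not follow from the cited estimate: that estimate compares the \emph{local} slope with \emph{strong} subdifferential elements at the same point, whereas $D_l\phi(\mu_r)$ is in general only a distributional limit of strong subdifferential elements at nearby points and need not lie in $\partial_s\phi(\mu_r)$; under relaxation the slope decreases and the subdifferential norm is only weakly lower semicontinuous, so no comparison in your direction survives. Second, and more importantly, your argument cannot close with it: from the saturated upper-gradient identity $-(\phi\circ\mu)'=sb$ and the H\"older bound $-(\phi\circ\mu)'\le ab$ you obtain $s\le a$ where $b>0$; adding a ``general bound'' in the \emph{same} direction yields nothing, and \eqref{eq: subdifferential slope equality} is never reached. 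What is needed --- and what the paper isolates as \eqref{eq: subdifferential slope inequality} --- is the \emph{reverse} bound: $|\partial^-\phi|(\nu)<+\infty$ implies $\partial_l\phi(\nu)\ne\emptyset$ and $\|D_l\phi(\nu)\|_{\mathrm{L}^q(\nu;\mathbb{R}^d)}\le|\partial^-\phi|(\nu)$. This is a closability/minimal-selection statement (obtained by adapting Lem.~4.6 of \cite{ambrosio2006stability} together with Lem.~10.3.4 and Rem.~3.1.7 of \cite{AGS08}): along a sequence realizing the relaxed slope one produces strong subdifferential elements of norm at most the local slope via the Moreau--Yosida resolvent and passes to the weak limit. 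With $a\le s$ in hand, your pincer $sb\le ab\le sb$ does force $a=s$ a.e., and the rest of your H\"older-equality argument for \eqref{eq: differential inclusion} goes through.

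A smaller remark: your anticipated obstacle (ii) --- establishing that $|\partial^-\phi|$ is a strong upper gradient along $\mu$ --- dissolves once the correct inequality is available, since $|(\phi\circ\mu)'(r)|\le\|D_l\phi(\mu_r)\|_{\mathrm{L}^q(\mu_r)}\,|\mu'|(r)\le|\partial^-\phi|(\mu_r)\,|\mu'|(r)$ follows directly from the assumed chain rule and H\"older; no separate abstract upper-gradient machinery is required. The difference between your cycle (a)$\Rightarrow$(c)$\Rightarrow$(b)$\Rightarrow$(a) and the paper's (a)$\Rightarrow$(b)$\Rightarrow$(c)$\Rightarrow$(a) is immaterial.
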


\begin{proof}
Such a limit curve $\mu$ of discrete solutions of the scheme \eqref{eq: general MM scheme}, \eqref{eq: MM p} exists and is locally absolutely continuous by Prop. 2.2.3 in \cite{AGS08} and it directly follows from the first part of the proof of Thm. 2.3.3  and Rem. 3.2.5 therein that $\mu$ satisfies the energy inequality \eqref{eq: energy inequality}. 

Let us prove the second part of Proposition \ref{prop: 2.1}. We assume that  $\mu$ is a locally absolutely continuous curve in $(\mathcal{P}_p(\mathbb{R}^d), \mathcal{W}_p)$ with tangent vector field $v$ and that the chain rule \eqref{eq: chain rule} holds good for $\phi \circ  \mu \in \mathrm{C}([0, +\infty))\cap \mathrm{W}_{\mathrm{loc}}^{1,1}((0, +\infty))$, leading to
\begin{equation}\label{eq: chain rule integral}
\phi(\mu(t)) - \phi(\mu(s)) \ = \ \int_s^t{\int_{\mathbb{R}^d}{\langle D_l\phi(\mu(r))(x), v_r(x)\rangle \mathrm{d}\mu_r(x)}\mathrm{d}r}
\end{equation}
for all $0\leq s\leq t < +\infty$. A simple adaptation of the proof of Lem. 4.6 in \cite{ambrosio2006stability} and an application of Lem. 10.3.4 and Rem. 3.1.7 in \cite{AGS08} show that
\begin{equation}\label{eq: subdifferential slope inequality}
|\partial^-\phi|(\nu) < +\infty \quad \Rightarrow \quad \partial_l\phi(\nu)\neq\emptyset, \quad \|D_l\phi(\nu)\|_{\mathrm{L}^q(\nu; \mathbb{R}^d)} \ \leq \ |\partial^-\phi|(\nu). 
\end{equation}
We infer from \eqref{eq: chain rule integral}, \eqref{eq: subdifferential slope inequality}, \eqref{eq: tangent 1} and an application of Cauchy-Schwarz inequality and Young's inequality that \ref{st: a} $\Rightarrow$ \ref{st: b} $\Rightarrow$ \ref{st: c} $\Rightarrow$ \ref{st: a}. The proof is complete. 

\end{proof}

Proposition \ref{prop: 2.1} offers a connection between three different approaches to the notion of steepest descent in $(\mathcal{P}_p(\mathbb{R}^d), \mathcal{W}_p)$. A gradient flow in a metric space (\textit{``curves of maximal slope''}) is typically characterized by an energy dissipation inequality like 
\begin{equation}\label{eq: edi}
\phi(\mu(s)) - \phi(\mu(t)) \ \geq \ \frac{1}{q} \int^{t}_{s}{|\partial^-\phi|^q(\mu(r)) \mathrm{d}r} \ + \ \frac{1}{p} \int^{t}_{s}{|\mu'|^p(r) \mathrm{d}r} 
\end{equation}
for $0\leq s \leq t < +\infty$  \cite{DeGiorgi-Marino-Tosques80, Degiovanni-Marino-Tosques85, Marino-Saccon-Tosques89, AGS08}.
Limit curves of \textit{discrete-time steepest descents} in metric spaces are generally characterized by such energy inequality \eqref{eq: energy inequality} and the proof that also an energy dissipation (in)equality like \eqref{eq: ede} (\eqref{eq: edi}) holds good typically involves a metric chain rule, cf. Def. 1.2.1 of ``strong upper gradient'' in \cite{AGS08} and the proof of Thm. 2.3.3 therein. 
As outlined above, the limiting subdifferential can be identified with $\nabla\frac{\delta\phi}{\delta u}(u)$ so that the gradient-flow-type \textit{differential equation} \eqref{eq: differential inclusion}, together with the continuity equation \eqref{eq: tangent 2}, represents a weak reformulation of the diffusion equation \eqref{eq: diffusion equation intro} in the space of probability measures. 

\begin{example}[The displacement convex case]\label{ex: convex case}
The linkage between different approaches to the notion of a steepest descent w.r.t. a displacement convex energy functional is well examined, see Thm. 11.1.3 in \cite{AGS08}. The underlying structure fits into our Proposition \ref{prop: 2.1}. 

Let $T_\mu^\nu$ denote the unique optimal transport map from $\mu$ to $\nu$ w.r.t. $\mathcal{W}_p$ for $\mu, \nu \in \{\phi < +\infty\}$ (see Thm. 1.2 from \cite{gangbo1996geometry} and condition \ref{ass: phi1}). If $\phi$ is displacement convex, i.e. 
\begin{equation*}
[0,1]\ni t\mapsto \phi\big(((1-t)\mathrm{id} + t\cdot T_\mu^\nu)_{\#}\mu\big) \quad\quad \text{ is convex }
\end{equation*}
for every $\mu, \nu\in\{\phi < +\infty\}$ (see \cite{mccann1997convexity}, often referred to as convex along constant speed geodesics \cite{AGS08}), then $|\partial^-\phi|\equiv|\partial\phi|$, every $\zeta\in\partial_l\phi(\mu)$ satisfies
\begin{equation*}
\phi(\nu) - \phi(\mu) \ \geq \ \int_{\mathbb{R}^d}{\langle \zeta(x), T_\mu^\nu(x)-x\rangle\mathrm{d}\mu(x)} \quad\quad \forall \nu\in\{\phi<+\infty\}, 
\end{equation*}
and whenever $\mu$ is a locally absolutely continuous curve with tangent vector field $v$ and $\int_0^t{|\partial\phi|(\mu(r))|\mu'|(r)\mathrm{d}r} < +\infty$ for all $t>0$, the function $\phi\circ\mu$ belongs to $\mathrm{C}([0, +\infty))\cap \mathrm{W}_{\mathrm{loc}}^{1,1}((0, +\infty))$ and the chain rule \eqref{eq: chain rule} holds good
for $\mathcal{L}^1$-a.e. $t>0$, see Cor. 2.4.10, Lem. 10.1.3, Lem. 10.3.4, Rem. 3.1.7, Thm. 5.4.4 and ``chain rule'' in Sect. 10.1.2 in \cite{AGS08}. 

The same is true if $\phi$ is $\lambda$-convex along constant speed geodesics for a constant $\lambda\in\mathbb{R}$, see Def. 9.1.1 in \cite{AGS08}.
\end{example}

Our next Example \ref{ex: chain rule} illustrates the application of Proposition \ref{prop: 2.1} to a wide class of energy functionals including functionals that are neither displacement convex nor $\lambda$-convex along constant speed geodesics. 

\begin{example}[Second order diffusion equation with no-flux boundary condition]\label{ex: chain rule}
The energy functional $\phi: \mathcal{P}_p(\mathbb{R}^d)\to(-\infty, +\infty]$, 
\begin{equation*}
\phi(\mu):= \int_{\mathbb{R}^d\times\mathbb{R}^d}{[F(u(x)) + V(x)u(x)+\frac{1}{2}W(x-y)u(x)]u(y)\mathrm{d}x\mathrm{d}y} 
\end{equation*}
if $ \mu=u\mathcal{L}^d\ll\mathcal{L}^d\llcorner\Omega$ (i.e. $u\equiv0$ on $\mathbb{R}^d\setminus\Omega$) and $\phi(\mu):=+\infty$ else, is defined on $(\mathcal{P}_p(\mathbb{R}^d), \mathcal{W}_p)$, with 
\begin{enumerate}[label={(A\arabic*)}]
\item $\Omega$ being an open and bounded subset of $\mathbb{R}^d$ with $\mathrm{C}^1$-boundary $\partial\Omega$, \label{ass: A0}
\item $F:[0,+\infty)\to\mathbb{R}$ being continuous with $F(0)=0$, 
twice continuously differentiable in $(0,+\infty)$ with $F''>0$ 
and satisfying 
\begin{equation*}
\lim_{s\to\infty}\frac{F(s)}{s} \ = \ +\infty
\end{equation*}
and 
\begin{equation*}
\exists C_F > 0: \ \  F(r+s)\leq C_F(1+F(r)+F(s)) \ \ \text{ for all } r,s \geq 0, 
\end{equation*}
 \label{ass: A1}
\item $V:\mathbb{R}^d\to\mathbb{R}$ being locally Lipschitz continuous,
  \label{ass: A2}
\item $W: \mathbb{R}^d\to [0,+\infty)$ being convex (thereby locally Lipschitz continuous) and even. 
\label{ass: A3}
\end{enumerate}
It is obvious from \ref{ass: A0}-\ref{ass: A3} that $\phi$ is bounded from below and its sublevel sets are compact in $(\mathcal{P}_p(\mathbb{R}^d), \mathcal{W}_p)$  (see e.g. the proof of Lem. 3.3 in \cite{fleissner2021minimizing}, Dunford-Pettis theorem and Thm. 7.12 in \cite{Villani03}) and therefore the energy functional satisfies \ref{ass: phi1}-\ref{ass: phi4}. 

A typical approach to the strong subdifferential $\partial_s\phi$ of an energy functional consists in computing directional derivatives of the functional, cf. Sect. 10.4 in \cite{AGS08}, Sect. 4 in \cite{gianazza2009wasserstein} and Sect. 2.4 in \cite{matthes2009family}, based on the first variation calculus introduced in \cite{jordan1998variational}. Let $\mu=u\mathcal{L}^d\in\{\phi < +\infty\}$ and $\xi\in\mathrm{C}_{\mathrm{c}}^{\infty}(\mathbb{R}^d; \mathbb{R}^d)$ with associated flux $(\mathbb{X}_t)_{t\in\mathbb{R}}, \ \mathbb{X}_t: \mathbb{R}^d\to\mathbb{R}^d$, 
\begin{equation*}
\partial_t\mathbb{X}_t(x) \ = \ \xi(\mathbb{X}_t(x)), \quad \mathbb{X}_0(x) \ = \ x \quad \text{ for all } x\in\mathbb{R}^d, \ t\in\mathbb{R}. 
\end{equation*}
For every $t\in\mathbb{R}$, the map $\mathbb{X}_t$ is a diffeomorphism and $\mathbb{X}_t(\Omega) = \Omega$ by Thm. 1 in \cite{brezis1970characterization}. We note that
\begin{equation*}
(\mathbb{X}_{t})_{\#}\mu \ = \ \bar{u}_t\mathcal{L}^d, \quad\quad |\mathrm{det}(\mathrm{D}\mathbb{X}_t(x))|\cdot \bar{u}_t(\mathbb{X}_t(x)) \ = \ u(x)
\end{equation*}
by the change of variables formula ($\mathrm{D}\mathbb{X}_t$ denotes the differential of $\mathbb{X}_t$), $\mathrm{det}(\mathrm{D}\mathbb{X}_t(x)) > 0$ for $t$ in a neighbourhood around $0$ and $t\mapsto\mathrm{det}(\mathrm{D}\mathbb{X}_t(x))$ is differentiable at $t=0$ with derivative equal to 
$\nabla\cdot\xi(x)$. Using \ref{ass: A0}-\ref{ass: A3}, similar arguments as in the second part of the proof of Lem. 10.4.4 in \cite{AGS08} and the dominated convergence theorem, we obtain the differentiability of $t\mapsto\phi((\mathbb{X}_t)_{\#}\mu)$ at $t=0$ with derivative equal to 
\begin{equation*}
-\int_{\mathbb{R}^d}{L_F(u(x))\nabla\cdot\xi(x) \mathrm{d}x} \ + \ \int_{\mathbb{R}^d}{\langle\nabla V(x) + (\nabla W\ast\mu)(x), \xi(x)\rangle\mathrm{d}\mu(x)}, 
\end{equation*}
where
\begin{equation}\label{eq: L_F}
L_F(s):= \begin{cases} sF'(s) - F(s) & \text{ if } s\in(0,+\infty), \\ 0 & \text{ if } s=0, \end{cases}
\end{equation}
and $L_F(u)\in\mathrm{L}^1(\mathbb{R}^d)$. Moreover, if $\partial_s\phi(\mu)\neq\emptyset$ and $\zeta\in\partial_s\phi(\mu)$, then 
\begin{equation*}
\frac{\mathrm{d}}{\mathrm{d}t}\bigg\vert_{t=0} \phi((\mathbb{X}_t)_{\#}\mu) \ = \ \int_{\mathbb{R}^d}{\langle\zeta(x), \xi(x)\rangle\mathrm{d}\mu(x)}
\end{equation*}
by Definition \ref{def: subdifferentials} of the strong subdifferential, from which we can directly infer that $L_F(u)\in\mathrm{W}^{1,1}(\mathbb{R}^d)$ with 
\begin{equation}\label{eq: nabla L_F}
\nabla L_F(u) \ = \ u\cdot\Big(\zeta - \nabla V - (\nabla W\ast \mu)\Big)
\end{equation}
(since $\xi\in\mathrm{C}_{\mathrm{c}}^{\infty}(\mathbb{R}^d; \mathbb{R}^d)$ was chosen arbitrarily). Hence, assuming the strong subdifferential at $\mu$ is nonempty, it contains a unique vector field $\zeta\in\mathrm{L}^q(\mu; \mathbb{R}^d)$ and $L_F(u)$ belongs to $\mathrm{W}^{1,1}(\mathbb{R}^d)$ satisfying \eqref{eq: nabla L_F}. 

The next step is to prove that also the limiting subdifferential is characterized by \eqref{eq: nabla L_F}. Let $\mu=u\mathcal{L}^d\in\{\phi<+\infty\}$ with $\partial_l\phi(\mu)\neq\emptyset$. If $\zeta\in\partial_l\phi(\mu)$, then by Definition \ref{def: subdifferentials}, there exist $\mu_n=u_n\mathcal{L}^d\stackrel{\mathcal{W}_p}{\longrightarrow}\mu$ and $\zeta_n\in\partial_s\phi(\mu_n)$ such that $\sup_n\Big\{\phi(\mu_n), \int_{\mathbb{R}^d}{|\zeta_n(x)|^q\mathrm{d}\mu_n(x)}\Big\} < +\infty$ and $\zeta_n\mu_n$ converges to $\zeta\mu$ in the distributional sense. The same arguments that yielded the compactness of the sublevel sets in $(\mathcal{P}_p(\mathbb{R}^d), \mathcal{W}_p)$ now show the weak $\mathrm{L}^1$-convergence of $u_n$ to $u$. Furthermore, $(L_F(u_n))_{n\in\mathbb{N}}$ is bounded in $\mathrm{W}^{1,1}(\mathbb{R}^d)$ and $(\nabla L_F(u_n))_{n\in\mathbb{N}}$ is equiintegrable because
\begin{equation*}
0 \ \leq \ L_F(u_n) \ \leq \ C_F(1+2F(u_n)) - 2 \min_{r\geq0} F(r), \quad L_F(u_n)\equiv 0 \text{ on } \mathbb{R}^d\setminus \Omega, 
\end{equation*}
\eqref{eq: nabla L_F} holds good for $\zeta_n, u_n, \mu_n$, the functions $V, W$ are locally Lipschitz continuous, $\sup_n \Big\{\phi(\mu_n), \int_{\mathbb{R}^d}{|\zeta_n(x)|^q\mathrm{d}\mu_n(x)}\Big\} < +\infty$ and $(u_n)_{n\in\mathbb{N}}$ is equiintegrable. Using Rellich-Kondrachov theorem, the facts that $L_F$ is strictly increasing and $L_F(u_n)\equiv0$ on $\mathbb{R}^d\setminus\Omega$, Egorov theorem and Dunford-Pettis theorem, we infer that $L_F(u)\in\mathrm{W}^{1,1}(\mathbb{R}^d)$ and $\nabla L_F(u_n)\stackrel{\mathrm{L}^1}{\rightharpoonup}\nabla L_F(u)$, see e.g. the second part of the proof of Thm. 3.4 in \cite{fleissner2021minimizing}. Moreover, it is not difficult to see that $u_n\nabla V \stackrel{\mathrm{L}^1}{\rightharpoonup}u\nabla V$ and $u_n\cdot(\nabla W\ast\mu_n)\stackrel{\mathrm{L}^1}{\rightharpoonup}u\cdot(\nabla W\ast\mu)$. Altogether, we have proved 
\begin{proposition}\label{prop: limiting subdifferential}
Let $\phi$ be the energy functional defined in Example \ref{ex: chain rule}. If $\partial_l\phi(\mu)$ is nonempty for $\mu=u\mathcal{L}^d\in\{\phi < +\infty\}$, then $L_F(u)\in\mathrm{W}^{1,1}(\mathbb{R}^d)$, the limiting subdifferential at $\mu$ contains a unique vector field $\zeta\in\mathrm{L}^q(\mu; \mathbb{R}^d)$ and \eqref{eq: nabla L_F} holds good. 
\end{proposition}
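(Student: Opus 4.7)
My plan proceeds in two stages. First I would characterize the strong subdifferential $\partial_s\phi(\mu)$ explicitly by first-variation calculus, and then transfer that characterization to $\partial_l\phi(\mu)$ by exploiting the distributional convergence built into its definition together with weak $\mathrm{L}^1$-compactness arguments. The structure of the argument is essentially already outlined in the paragraphs preceding the proposition; the task is to organize it so as to pin down exactly where the nontrivial work lies.

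\textbf{Stage 1 (strong subdifferential).} Fix $\mu = u\mathcal{L}^d \in \{\phi < +\infty\}$ with $\zeta \in \partial_s\phi(\mu)$ and an arbitrary test field $\xi \in \mathrm{C}_c^\infty(\mathbb{R}^d; \mathbb{R}^d)$. The cited theorem of Brezis gives $\mathbb{X}_t(\Omega) = \Omega$ for the flow of $\xi$, so $(\mathbb{X}_t)_{\#}\mu$ remains in $\{\phi < +\infty\}$; via the change of variables formula and the growth bound in \ref{ass: A1} (which furnishes a dominating function for dominated convergence), I would differentiate $t \mapsto \phi((\mathbb{X}_t)_{\#}\mu)$ at $t=0$, arriving at the expression $-\int L_F(u)\,\nabla\cdot\xi\,\mathrm{d}x + \int \langle \nabla V + \nabla W \ast \mu, \xi\rangle\,\mathrm{d}\mu$ computed in the paragraph preceding the proposition. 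Setting $T = \mathbb{X}_t$ in Definition~\ref{def: subdifferentials} and dividing by $t$ as $t\downarrow 0$ produces $\int \langle \zeta, \xi\rangle\,\mathrm{d}\mu$. Equating and using the arbitrariness of $\xi$ gives $L_F(u)\in \mathrm{W}^{1,1}(\mathbb{R}^d)$ together with \eqref{eq: nabla L_F}, and forces $\zeta$ to be uniquely determined $\mu$-a.e.\ on $\{u>0\}$.

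\textbf{Stage 2 (limiting subdifferential).} For $\zeta \in \partial_l\phi(\mu)$, choose $\mu_n = u_n\mathcal{L}^d \stackrel{\mathcal{W}_p}{\longrightarrow}\mu$ and $\zeta_n \in \partial_s\phi(\mu_n)$ as in Definition~\ref{def: subdifferentials}. Stage~1 yields $\nabla L_F(u_n) = u_n(\zeta_n - \nabla V - (\nabla W \ast \mu_n))$. Superlinearity of $F$ (by \ref{ass: A1}) and the uniform bound on $\phi(\mu_n)$ give equiintegrability of $(u_n)_n$, and hence weak $\mathrm{L}^1$-convergence $u_n \rightharpoonup u$; the growth bound on $L_F$ implied by \ref{ass: A1}, the local Lipschitz regularity of $V$ and $W$, and the uniform $\mathrm{L}^q(\mu_n)$-bound on $\zeta_n$ together yield both a $\mathrm{W}^{1,1}$-bound on $(L_F(u_n))_n$ and equiintegrability of its gradient. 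Applying Rellich--Kondrachov on a bounded neighbourhood of $\bar\Omega$ (using $L_F(u_n)\equiv 0$ outside $\Omega$), I would extract strong $\mathrm{L}^1$-convergence of $L_F(u_n)$ to some $g$ along a subsequence; strict monotonicity of $L_F$ together with Egorov's theorem would identify $g = L_F(u)$ and furnish $u_n \to u$ a.e., while Dunford--Pettis then upgrades the gradient convergence to weak $\mathrm{L}^1$. The remaining terms $u_n\nabla V \rightharpoonup u\nabla V$ and $u_n(\nabla W \ast \mu_n) \rightharpoonup u(\nabla W \ast \mu)$ in $\mathrm{L}^1$ follow from the Lipschitz regularity and the $\mathcal{W}_p$-convergence, while $\zeta_n\mu_n \to \zeta\mu$ distributionally by definition of $\partial_l\phi$; passing to the limit in the identity for $\nabla L_F(u_n)$ establishes $L_F(u)\in \mathrm{W}^{1,1}(\mathbb{R}^d)$ and \eqref{eq: nabla L_F} for $\zeta$, from which uniqueness of $\zeta \in \mathrm{L}^q(\mu;\mathbb{R}^d)$ follows as in Stage~1.

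\textbf{Main obstacle.} The delicate step is the identification of the weak $\mathrm{L}^1$-limit of $\nabla L_F(u_n)$ with $\nabla L_F(u)$ \emph{without} any convexity assumption on $F$: this hinges on exploiting strict monotonicity of $L_F$ (to deduce a.e.\ convergence $u_n \to u$ from strong $\mathrm{L}^1$-compactness of $L_F(u_n)$) and on the equiintegrability argument that prevents concentration of the gradients. Everything else is essentially routine first-variation calculus and standard functional-analytic passage to the limit, and the author's reference to the analogous argument in \cite{fleissner2021minimizing} suggests that these technical points can be imported rather than redone from scratch.
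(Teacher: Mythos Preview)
Your proposal is correct and follows essentially the same route as the paper's own argument: first-variation calculus along flows (with Brezis's theorem to keep $\Omega$ invariant) to characterize $\partial_s\phi$, followed by equiintegrability via the superlinearity in \ref{ass: A1}, Rellich--Kondrachov plus the strict monotonicity of $L_F$ and Egorov to identify the limit, and Dunford--Pettis to pass the gradients to the weak $\mathrm{L}^1$-limit. One minor remark on your commentary: $F$ \emph{is} assumed strictly convex ($F''>0$ in \ref{ass: A1}), so the delicate point is really the absence of \emph{displacement} convexity of $\phi$ (i.e.\ condition \ref{ass: B2} is not assumed), not convexity of $F$ itself.
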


Lastly, we are concerned with the validation of the chain rule \eqref{eq: chain rule}. 
\begin{proposition}\label{prop: chain rule}
Let $\phi$ be the energy functional defined in Example \ref{ex: chain rule} and let $\mu: [0, +\infty) \to \{\phi<+\infty\}\subset \mathcal{P}_p(\mathbb{R}^d)$ be a locally absolutely continuous curve in $(\mathcal{P}_p(\mathbb{R}^d), \mathcal{W}_p)$ with tangent vector field $v$. We assume \ref{ass: B1} or \ref{ass: B2} 
(it suffices to assume only one of the following two conditions): 
\begin{enumerate}[label={(B\arabic*)}]
\item The density functions associated with $\mu$ are locally uniformly bounded, i.e. $\mu_t=u_t\mathcal{L}^d$ and $\sup_{t\in[0, T]}\|u_t\|_{\infty}<+\infty$ for every $T>0$. \label{ass: B1}
\item $F$ satisfies
\begin{equation*}
s\mapsto s^dF(s^{-d}) \ \ \text{ is convex in } (0,+\infty). 
\end{equation*}\label{ass: B2}
\end{enumerate}
If $(|\partial^-\phi|\circ\mu)\cdot|\mu'| \in \mathrm{L}^1_{\mathrm{loc}}(0,+\infty)$, then  $\phi \circ  \mu \in \mathrm{C}([0, +\infty))\cap \mathrm{W}_{\mathrm{loc}}^{1,1}((0, +\infty))$ and for $\mathcal{L}^1$-a.e. $t>0$, the limiting subdifferential $\partial_l\phi(\mu(t))$ contains a unique element $D_l\phi(\mu(t))$ and \eqref{eq: chain rule} holds good. 
\end{proposition}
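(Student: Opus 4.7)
The plan has two stages: establish the regularity $\phi\circ\mu\in\mathrm{C}([0,+\infty))\cap\mathrm{W}_{\mathrm{loc}}^{1,1}((0,+\infty))$ via a strong-upper-gradient argument, then identify the a.e.\ derivative by decomposing $\phi$ and treating each summand separately. For the first stage, I aim at the metric inequality
\begin{equation*}
|\phi(\mu(t))-\phi(\mu(s))|\ \leq \ \int_s^t |\partial^-\phi|(\mu(r))\,|\mu'|(r)\,\mathrm{d}r, \quad 0\leq s\leq t,
\end{equation*}
which, combined with the hypothesis $(|\partial^-\phi|\circ\mu)\cdot|\mu'|\in \mathrm{L}^1_{\mathrm{loc}}(0,+\infty)$, immediately yields the desired continuity and local absolute continuity. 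Following the metric-space template of Thm.\ 1.2.5 in \cite{AGS08}, I would first show that $|\partial\phi|$ is an upper gradient along $\mu$ by discretizing $[s,t]$ and invoking the $\limsup$ definition of the local slope together with the local absolute continuity of $\mu$; the passage from $|\partial\phi|$ to $|\partial^-\phi|$ then uses the very definition of the relaxed slope (Definition \ref{def: slopes}) in combination with assumptions \ref{ass: phi3}, \ref{ass: phi4}.

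For the second stage, the integrability hypothesis forces $|\partial^-\phi|(\mu(t))<+\infty$ for $\mathcal{L}^1$-a.e.\ $t$, so by \eqref{eq: subdifferential slope inequality} and Proposition \ref{prop: limiting subdifferential} the limiting subdifferential $\partial_l\phi(\mu(t))$ is a singleton $\{D_l\phi(\mu(t))\}$ described through \eqref{eq: nabla L_F}. I would then decompose $\phi=\phi_F+\phi_V+\phi_W$ with $\phi_F(\mu)=\int F(u)\,\mathrm{d}x$, $\phi_V(\mu)=\int V\,\mathrm{d}\mu$, $\phi_W(\mu)=\tfrac12\int(W\ast\mu)\,\mathrm{d}\mu$, and derive the chain rule for each piece separately. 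For $\phi_V$ and $\phi_W$, the local Lipschitz continuity of $V$ and $W$, the uniform support condition $\mathrm{supp}(\mu_t)\subset\overline\Omega$ and a mollification argument against the distributional continuity equation \eqref{eq: tangent 2} yield
\begin{equation*}
(\phi_V\circ\mu)'(t)=\int\langle\nabla V,v_t\rangle\,\mathrm{d}\mu_t, \qquad (\phi_W\circ\mu)'(t)=\int\langle(\nabla W)\ast\mu_t,v_t\rangle\,\mathrm{d}\mu_t
\end{equation*}
for $\mathcal{L}^1$-a.e.\ $t>0$ in a routine manner.

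The main obstacle is the chain rule for the internal-energy summand $\phi_F$. Under \ref{ass: B2}, McCann's condition is precisely what gives displacement convexity of $\phi_F$ in $(\mathcal{P}_p(\mathbb{R}^d),\mathcal{W}_p)$ (cf.\ \cite{mccann1997convexity}), so Example \ref{ex: convex case} applied to $\phi_F$ provides the chain rule directly; summing with the $\phi_V,\phi_W$ contributions reproduces $\int\langle D_l\phi(\mu_t),v_t\rangle\,\mathrm{d}\mu_t$ even though $\phi$ itself need not be displacement convex. Under \ref{ass: B1}, displacement convexity is unavailable, but the uniform bound $\sup_{t\in[0,T]}\|u_t\|_\infty<+\infty$ restricts the values of $u_t$ to a compact interval $[0,M]$ on which $F$ is $\mathrm{C}^2$ and $L_F$ is Lipschitz with $L_F(0)=0$. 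My plan is to approximate $F$ by a sequence of convex functions $F_k\in\mathrm{C}^2([0,+\infty))$ satisfying McCann's condition and agreeing with $F$ on $[0,M]$, apply the displacement-convex chain rule to each $\phi_{F_k}\circ\mu$, and pass to the limit $k\to\infty$ via dominated convergence using the uniform $\mathrm{L}^\infty$-bound on $u_t$. The delicate point is constructing such $F_k$ while simultaneously preserving convexity and McCann's condition together with compatibility on $[0,M]$; this is the one genuine computation I foresee.
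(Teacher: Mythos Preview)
Your first stage contains a genuine gap. The inequality
\[
|\phi(\mu(t))-\phi(\mu(s))|\ \leq\ \int_s^t |\partial^-\phi|(\mu(r))\,|\mu'|(r)\,\mathrm{d}r
\]
amounts to saying that $|\partial^-\phi|$ is a \emph{strong} upper gradient for $\phi$, and this is \emph{not} a consequence of the $\limsup$ definition of the local slope plus discretization. The discretization argument you sketch would need, for each partition point, a uniform bound of the form $|\phi(\mu(t_i))-\phi(\mu(t_{i-1}))|\leq (|\partial\phi|(\mu(t_{i-1}))+o(1))\,\mathcal{W}_p(\mu(t_i),\mu(t_{i-1}))$, but the $\limsup$ in Definition~\ref{def: slopes} gives no such two-sided uniform control. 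In \cite{AGS08} the strong upper gradient property is obtained only under $\lambda$-convexity (Cor.~2.4.10), which is precisely what you do not have here. Moreover, even if $|\partial\phi|$ were a strong upper gradient, the passage to $|\partial^-\phi|$ goes the wrong way, since $|\partial^-\phi|\leq|\partial\phi|$ in general. In short, the first stage is circular: the inequality you want is a \emph{consequence} of the chain rule (via Cauchy--Schwarz and \eqref{eq: subdifferential slope inequality}), not a route to it.

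The paper avoids this entirely by dispensing with a preliminary upper-gradient step and proving, for each summand $\mathsf{V},\mathsf{W},\mathsf{F}$ separately, that the composition with $\mu$ lies in $\mathrm{C}\cap\mathrm{W}^{1,1}_{\mathrm{loc}}$ \emph{with} the explicit derivative formula; absolute continuity of $\phi\circ\mu$ then comes for free by summing. Your treatment of $\phi_V$, $\phi_W$ and of $\phi_F$ under \ref{ass: B2} matches the paper's. Under \ref{ass: B1}, however, the paper does \emph{not} approximate $F$ by McCann-type functions (your proposed construction---matching $F$ on $[0,M]$ while enforcing the global condition $s\mapsto s^dF(s^{-d})$ convex---is far from obviously feasible). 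Instead it linearizes $F$ near zero,
\[
F_\epsilon(s):=\begin{cases}F(s)-F(\epsilon)+F'(\epsilon)\epsilon,&s>\epsilon,\\ F'(\epsilon)s,&0\leq s\leq\epsilon,\end{cases}
\]
so that $L_{F_\epsilon}(s)=(L_F(s)-L_F(\epsilon))^+$, invokes Prop.~4.8 of \cite{ambrosio2006stability} (which handles $F_\epsilon$ directly, using the $\mathrm{L}^1$-control on $\nabla L_F(u)$ coming from \eqref{eq: subdifferential slope inequality}, \eqref{eq: nabla L_F} and the hypothesis $(|\partial^-\phi|\circ\mu)\cdot|\mu'|\in\mathrm{L}^1_{\mathrm{loc}}$), and then sends $\epsilon\downarrow0$ by dominated convergence. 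This is both simpler and avoids the existence question your approximation raises.
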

\begin{proof}
It follows from \eqref{eq: subdifferential slope inequality} and Proposition \ref{prop: limiting subdifferential} that whenever $|\partial^-\phi|(\mu(t)) < +\infty, \ \mu(t)=u(t)\mathcal{L}^d,$ then $\partial_l\phi(\mu(t))\neq\emptyset$ contains a unique element $D_l\phi(\mu(t))$ with $\|D_l\phi(\mu(t))\|_{L^q(\mu_t; \mathbb{R}^d)} \leq |\partial^-\phi|(\mu(t))$, and $L_F(u(t))$ belongs to $\mathrm{W}^{1,1}(\mathbb{R}^d)$ with 
\begin{equation}\label{eq: nabla L_F(u(t))}
\nabla L_F(u(t)) \ = \ u(t)\cdot\Big(D_l\phi(\mu(t)) - \nabla V - (\nabla W\ast \mu(t))\Big).
\end{equation}

Let $R>0$ s.t. $\Omega\subset\overline{\mathcal{B}(0; R)}:=\{x\in\mathbb{R}^d: \ |x|\leq R\}$. 
As the functional $\mathsf{W}: \mathcal{P}_p(\overline{\mathcal{B}(0; R)})\ni\nu\mapsto\frac{1}{2}\int{\int{W(x-y)\mathrm{d}\nu}\mathrm{d}\nu}$ is displacement convex according to Prop. 1.2 in \cite{mccann1997convexity}, Prop. 9.3.5 in \cite{AGS08}, and $W$ is locally Lipschitz continuous, we may apply the last part of the proof of Thm. 10.4.11, Thm. 10.3.11 and Prop. 10.3.18  from \cite{AGS08} to $\mathsf{W}$ and the curve $\mu$ to obtain $\mathsf{W}\circ  \mu \in \mathrm{C}([0, +\infty))\cap \mathrm{W}_{\mathrm{loc}}^{1,1}((0, +\infty))$ with 
\begin{equation*}
(\mathsf{W}\circ\mu)'(t) \ = \ \int_{\mathbb{R}^d}{\langle(\nabla W\ast\mu(t))(x), v_t(x)\rangle\mathrm{d}\mu_t(x)} 
\end{equation*}
for $\mathcal{L}^1$-a.e. $t>0$. Also the functional $\mathsf{V}: \mathcal{P}_p(\Omega)\ni\nu\mapsto\int{V(x)\mathrm{d}\nu}$ satisfies $\mathsf{V}\circ  \mu \in \mathrm{C}([0, +\infty))\cap \mathrm{W}_{\mathrm{loc}}^{1,1}((0, +\infty))$ with chain rule 
\begin{equation*}
(\mathsf{V}\circ\mu)'(t) \ = \ \int_{\mathbb{R}^d}{\langle\nabla V(x), v_t(x)\rangle\mathrm{d}\mu_t(x)} 
\end{equation*}
for $\mathcal{L}^1$-a.e. $t>0$. Indeed, the theory of Sobolev extensions and mollifiers (see e.g. Sects. 5.4, 5.3.1 and C.5 in \cite{evans2010partial}) provides us with $\mathrm{C}^\infty_{\mathrm{c}}(\mathbb{R}^d)$-functions $\xi_\epsilon$ that are uniformly bounded in $\mathrm{W}^{1,\infty}(\mathbb{R}^d)$ and approximate the locally Lipschitz continuous function $V$ and its weak gradient in $\Omega$, i.e. $\sup_\epsilon\|\xi_\epsilon\|_{\mathrm{W}^{1,\infty}(\mathbb{R}^d)}<+\infty$, 
\begin{equation*}
\xi_\epsilon\to V \quad \text{and} \quad \nabla\xi_\epsilon\to\nabla V \quad \mathcal{L}^d\text{-a.e. in } \Omega; 
\end{equation*}
testing the continuity equation \eqref{eq: tangent 2} for $\mu, v$ on $\mathrm{C}^\infty_{\mathrm{c}}((0, +\infty)\times\mathbb{R}^d)$-functions
\begin{equation*}
(t,x)\mapsto \eta(t)\xi_\epsilon(x), \quad \quad \eta\in\mathrm{C}^\infty_{\mathrm{c}}((0, +\infty)), 
\end{equation*}
letting $\epsilon\downarrow0$ and applying the dominated convergence theorem and \eqref{eq: tangent 1}, we obtain the above statement for $\mathsf{V}\circ\mu$. 

All that remains to be proved is $\mathsf{F}\circ\mu\in\mathrm{C}([0,+\infty))\cap\mathrm{W}^{1,1}_{\mathrm{loc}}((0,+\infty))$, 
\begin{equation}\label{eq: chain rule F}
(\mathsf{F}\circ\mu)'(t)  \ = \ \int_{\mathbb{R}^d}{\langle \nabla L_F(u(t))(x), v_t(x) \rangle\mathrm{d}x} \quad\quad \mathcal{L}^1\text{-a.e.}
\end{equation}
for $\mathsf{F}(\mu(\cdot)):=\int_{\mathbb{R}^d}{F(u(\cdot,x))\mathrm{d}x}$. 

Assuming \ref{ass: B1}, we define $F_\epsilon: [0, +\infty)\to\mathbb{R}$, 
\begin{equation*}
F_\epsilon(s):=\begin{cases}
F(s)-F(\epsilon)+F'(\epsilon)\epsilon &\text{ if } \epsilon < s, \\  F'(\epsilon)s &\text{ if } 0\le s \le \epsilon, \end{cases}
\end{equation*}
and $\mathsf{F}_\epsilon(\mu(\cdot)):=\int_{\mathbb{R}^d}{F_\epsilon(u(\cdot,x))\mathrm{d}x}$ for $\epsilon >0$. It is not difficult to see that $\mathsf{F}_\epsilon(\mu(t))\to\mathsf{F}(\mu(t))$ as $\epsilon\downarrow0$ for all $t\geq0$. Moreover, 
\begin{equation*}
L_{F_\epsilon}(s) = \begin{cases}
L_F(s)-L_F(\epsilon) &\text{ if } \epsilon < s, \\ 0 &\text{ if } 0\le s\le \epsilon.\end{cases}
\end{equation*}
An argumentation similar to that from the proof of Prop. 4.8 in \cite{ambrosio2006stability} yields $\mathsf{F}_\epsilon\circ\mu\in\mathrm{C}([0, +\infty))\cap \mathrm{W}_{\mathrm{loc}}^{1,1}((0, +\infty))$ with 
\begin{equation*}
(\mathsf{F}_\epsilon\circ\mu)'(r)  \ = \ \int_{\mathbb{R}^d}{\langle \nabla L_{F_\epsilon}(u(r))(x), v_r(x) \rangle\mathrm{d}x} \quad\quad \mathcal{L}^1\text{-a.e.,}
\end{equation*}
where \eqref{eq: subdifferential slope inequality}, \eqref{eq: nabla L_F(u(t))} and the assumption that  $(|\partial^-\phi|\circ\mu)\cdot|\mu'|$ belongs to $\mathrm{L}^1_{\mathrm{loc}}(0,+\infty)$ is used.   
Letting $\epsilon\downarrow0$ in
\begin{equation*}
\mathsf{F}_\epsilon(\mu(t)) - \mathsf{F}_\epsilon(\mu(0)) \ = \ \int_0^t{\int_{\mathbb{R}^d}{\langle \nabla L_{F_\epsilon}(u(r))(x), v_r(x) \rangle\mathrm{d}x}\mathrm{d}r}, 
\end{equation*}
we obtain $\mathsf{F}\circ\mu\in\mathrm{C}([0,+\infty))\cap\mathrm{W}^{1,1}_{\mathrm{loc}}((0,+\infty))$ and \eqref{eq: chain rule F}.

Assuming \ref{ass: B2}, the functional $\mathsf{F},$ 
\begin{equation*}
\mathsf{F}(\nu):=\int_{\mathbb{R}^d}{F(\rho(x))\mathrm{d}x} \quad\text{ for } \nu=\rho\mathcal{L}^d\in \mathcal{P}_p(\overline{\mathcal{B}(0; R)}),
\end{equation*}
is displacement convex in $(\mathcal{P}_p(\overline{\mathcal{B}(0; R)}), \mathcal{W}_p)$ according to Thm. 2.2 in \cite{mccann1997convexity}, Prop. 9.3.9 in \cite{AGS08} ($s\mapsto s^dF(s^{-d})$ is nonincreasing in $(0,+\infty)$ as $L_F\ge0$), and we infer $\mathsf{F}\circ\mu\in\mathrm{C}([0,+\infty))\cap\mathrm{W}^{1,1}_{\mathrm{loc}}((0,+\infty))$ and \eqref{eq: chain rule F} from Thms. 10.4.6, 10.3.11, Prop. 10.3.18  in \cite{AGS08}, \eqref{eq: subdifferential slope inequality}, \eqref{eq: nabla L_F(u(t))} and from $(|\partial^-\phi|\circ\mu)\cdot|\mu'|\in\mathrm{L}^1_{\mathrm{loc}}(0,+\infty)$. 

The proof of Proposition \ref{prop: chain rule} is complete. 
\end{proof}

So we have all the ingredients for an application of Proposition \ref{prop: 2.1} to our example; the corresponding differential equation \eqref{eq: differential inclusion} reads as 
\begin{equation}\label{eq: diffusion equation weak form}
\int_0^{+\infty}{\int_{\Omega}{u\Big(\partial_t\xi - \Big\langle j_q\Big(\frac{\nabla L_F(u)}{u} + \nabla V + (\nabla W) \ast u\Big), \nabla_x\xi\Big\rangle\Big) \mathrm{d}x}\mathrm{d}t} \ = \ 0
\end{equation}
for all $\xi\in\mathrm{C}^\infty_{\mathrm{c}}((0,+\infty)\times\mathbb{R}^d)$, which can be easily identified as a weak reformulation of \eqref{eq: diffusion equation intro}
\begin{equation*}
\partial_t u - \nabla\cdot\Big(uj_q\Big(\nabla F'(u) + \nabla V + (\nabla W)\ast u\Big) \Big) \ = \ 0\quad \text{ in } (0, +\infty)\times\Omega
\end{equation*}
for $u: [0, +\infty)\times\Omega\to[0, +\infty)$. Any solution $u$ thereto satisfies, in addition, a weak form of the no-flux boundary condition 
\begin{equation*}
uj_q\Big(\nabla F'(u)+ \nabla V + (\nabla W) \ast u\Big)\cdot {\sf n} \ = \ 0 \quad \text{ on } (0, +\infty)\times\partial\Omega,
\end{equation*}
because \eqref{eq: diffusion equation weak form} does not only allow test functions $\xi\in\mathrm{C}^\infty_{\mathrm{c}}((0,+\infty)\times\Omega)$ but is tested on  all $\xi\in\mathrm{C}^\infty_{\mathrm{c}}((0,+\infty)\times\mathbb{R}^d)$. We point to the well-known thermodynamic interpretation of the function $L_F(u(t, \cdot))$ as pressure associated with the density $u(t, \cdot)$, cf. Rem. 5.18 in \cite{Villani03}. 

Please note that even if $F$ satisfies \ref{ass: B2}, the energy functional $\phi$ from Example \ref{ex: chain rule} may \textit{not} be displacement convex / $(\lambda)$-convex along constant speed geodesics in $(\mathcal{P}_p(\mathbb{R}^d), \mathcal{W}_p)$ because we do not assume any convexity for $V$ nor for $\Omega$. 

Example \ref{ex: chain rule} is revisited in Sections \ref{sec: 3.2} - \ref{sec: 3.3} on the Kernel-Density-Estimator Minimizing Movement Scheme. 
\end{example}

We end this section with two remarks regarding relaxations of the assumptions on $\phi$. 

\begin{remark}[Relaxation of \ref{ass: phi1}, \ref{ass: phi4}]\label{rem: relaxation}
We assume \ref{ass: phi1} only for the sake of a clear presentation with a straightforward notation. The above theory of gradient flows in $(\mathcal{P}_p(\mathbb{R}^d), \mathcal{W}_p)$ can be generalized to the case that $\phi$ does not satisfy \ref{ass: phi1}. 
The subdifferential calculus from Definition \ref{def: subdifferentials} has to be adapted for this case, see Sect. 10.3 and Def. 10.3.1 in \cite{AGS08}, and the chain rule \eqref{eq: chain rule} has to be reformulated correspondingly; Prop. 10.4.2, Rem. 10.4.3 and Thm. 10.4.11 in \cite{AGS08} provide a characterization of subdifferentials for $\phi(\mu):=\int{V\mathrm{d}\mu}+\int{\int{W}\mathrm{d}(\mu\times\mu)}$. 

A natural setting for the Minimizing Movement scheme associated with \eqref{eq: MM p} includes coercivity and compactness properties like \ref{ass: phi2}, \ref{ass: phi3}, \ref{ass: phi4} so that the existence of both discrete solutions for $\tau < \big(\frac{1}{pB}\big)^{1/(p-1)}$ and converging subsequences thereof is guaranteed. The assumption \ref{ass: phi4} can be relaxed by using an auxiliary topology besides the metric topology, see Sects. 2 and 3 in \cite{AGS08}. A suitable auxiliary topology for $\mathcal{P}_p(\mathbb{R}^d)$ is the one induced by weak convergence $\nu_n\rightharpoonup\nu$  defined as
\begin{equation*}
\lim_{n\to+\infty}\int_{\mathbb{R}^d}{f(x)\mathrm{d}\nu_n(x)} \ = \ \int_{\mathbb{R}^d}{f(x)\mathrm{d}\nu(x)} \quad \quad \text{ for all } f\in\mathrm{C}_{b}(\mathbb{R}^d). 
\end{equation*}
Please note that 
\begin{equation*}
\lim_{n\to+\infty}\mathcal{W}_p(\nu_n, \nu) \ = \ 0 \quad \Leftrightarrow \quad \nu_n\rightharpoonup\nu, \ \  \lim_{R\to+\infty}\limsup_{n\to+\infty}\int_{|x|\ge R}{|x|^p\mathrm{d}\nu_n(x)} \ = \ 0 
\end{equation*}
(see e.g. Thm. 7.12 in \cite{Villani03}), and by Prokhorov's Theorem, $\mathcal{W}_p$-bounded sets are relatively compact w.r.t. weak convergence. Assuming \ref{ass: phi2} and 
\begin{equation}\label{eq: weak lsc}
\phi(\nu) \ \leq \ \liminf_{n\to+\infty}\phi(\nu_n) \quad\text{whenever}\quad v_n\rightharpoonup\nu, \ \sup_n\mathcal{W}_p(\nu_n, \nu) < +\infty
\end{equation}
instead of \ref{ass: phi3}, \ref{ass: phi4}, it is still possible to prove \eqref{eq: subdifferential slope inequality} and to establish statements very similar to those of Proposition \ref{prop: 2.1} following the same argumentation. The only differences in the statements are that, in this case, $\mathcal{W}_p(\mu_n, \mu) \to 0$ is replaced with $\sup_n\mathcal{W}_p(\mu_n, \mu) < +\infty, \ \mu_n\rightharpoonup\mu$ in the definitions of the relaxed slope $|\partial^-\phi|$, the limiting subdifferential $\partial_l\phi(\mu)$ (cf. Definitions \ref{def: slopes} and \ref{def: subdifferentials}) and the sequence of initial data, and we obtain convergence of discrete solutions \eqref{eq: piecewise constant interpolation} of the Minimizing Movement scheme \eqref{eq: general MM scheme}, \eqref{eq: MM p} w.r.t. the weak topology. 
\end{remark}

\begin{remark}[Absence of chain rule]\label{rem: absence of chain rule}
Even if the chain rule \eqref{eq: chain rule} cannot be validated, the Minimizing Movement scheme associated with \eqref{eq: MM p}, $p=2$, yields the existence of a curve $\mu$ satisfying both the energy inequality \eqref{eq: energy inequality} and the continuity equation  
\begin{equation*}
\partial\mu_t - \nabla\cdot (D_l\phi(\mu(t))\mu_t) \ = \ 0
\end{equation*}
(where $-D_l\phi(\mu(\cdot))$ is not necessarily tangent to $\mu$ in contrast to \eqref{eq: differential inclusion}), cf. Thm. 2.3.3 and Thm. 11.1.6 in \cite{AGS08}.
\end{remark}

\subsection{$\Gamma$-Convergence for Gradient Flows: A Minimizing Movement Approach}\label{sec: 2.2}
As explicated in Sect. 1 in \cite{fleissner2016gamma}, we cannot expect stability of continuous-time gradient flows under $\Gamma$-convergence of the energy functionals; in general, a corresponding sequence of gradient flow solutions is \textit{not} related to the gradient flow associated with the $\Gamma$-limit functional. Things are considerably better on the level of discrete-time steepest descents. 

Let $p\in(1, +\infty)$ and $\phi_n, \phi: \mathcal{P}_p(\mathbb{R}^d)\to(-\infty, +\infty] \ (n\in\mathbb{N})$ be energy functionals s.t. $\phi_n\stackrel{\Gamma}{\to}\phi$ in $(\mathcal{P}_p(\mathbb{R}^d), \mathcal{W}_p)$, i.e. 
\begin{equation}\label{eq: Gamma-liminf}
\phi(\mu) \ \leq \ \liminf_{n\to+\infty}{\phi_{n}(\mu_n)} \quad \text{ whenever }  \quad \lim_{n\to+\infty}\mathcal{W}_p(\mu_n, \mu)  =  0
\end{equation}
and for all  $\mu\in\mathcal{P}_p(\mathbb{R}^d)$ there exists a so-called \textit{recovery sequence} 
\begin{equation}\label{eq: recovery sequence}
\exists \ \text{ a sequence } \bar{\mu}_n, \lim_{n\to+\infty}\mathcal{W}_p(\bar{\mu}_n, \mu)  =  0:  \   \phi(\mu)  =  \lim_{n\to+\infty}{\phi_{n}(\bar{\mu}_n)}. 
\end{equation}

Every time step size $\tau>0$ is assigned a parameter $n=n(\tau)\in\mathbb{N}$ according to a rule that is specified below; we are concerned with the limiting behaviour of discrete solutions \eqref{eq: piecewise constant interpolation} to the \textit{relaxed Minimizing Movement scheme}
\begin{equation}\label{eq: relaxed MM 1}
\Phi(\tau, \mu_\tau^{m-1}, \mu_\tau^m) \ \leq \ \inf_{\nu\in\mathcal{P}_p(\mathbb{R}^d)}{\Phi(\tau, \mu_\tau^{m-1}, \nu)} \ + \ \gamma_\tau^{(m)} \quad \quad (m\in\mathbb{N})
\end{equation}
associated with 
\begin{equation}\label{eq: relaxed MM 2}
\Phi(\tau, \mu, \nu) \ := \ \phi_{n(\tau)}(\nu) + \frac{1}{p\tau^{p-1}}\mathcal{W}_p(\nu, \mu)^p, \quad\quad\quad \gamma_\tau^{(m)} > 0 
\end{equation}
as the time step sizes $\tau\downarrow0$ and parameters $n(\tau)\uparrow+\infty$ simultaneously. 

By comparison, the analysis of the limiting behaviour of the continuous-time gradient flows associated with $\phi_n, \ n\in\mathbb{N},$ amounts to first letting the time step sizes $\tau\downarrow0$ for fixed $n\in\mathbb{N}$ and only then letting the parameters $n\uparrow+\infty$. In view of the wide range of possible $\Gamma$-perturbations $\phi_n$ of $\phi$, such an approach to the study of stability of steepest descents is too restrictive, cf. Sect. 1 in \cite{fleissner2016gamma}. We need to take the steepest descent motion w.r.t. $\phi_n,\ n\in\mathbb{N},$ as a whole and compensate for the lack of control over the slopes of $\phi_n$ by bringing the interplay between parameters and time step sizes into focus. 

There are, however, particular cases $\phi_n\stackrel{\Gamma}{\to}\phi$ in which the continuous-time steepest descents w.r.t $\phi_n, \ n\in\mathbb{N},$ converge to the continuous-time gradient flow associated with $\phi$; typically related with a $\Gamma$-liminf inequality of the corresponding relaxed slopes $|\partial^-\phi|, \ |\partial^-\phi_n|, \ n\in\mathbb{N}$, these special cases are also covered by our general theory, see Remark \ref{rem: main ass}\ref{rem: main ass iii}. 

This section serves as a detailed introduction to our stability theory for steepest descents under the occurrence of $\Gamma$-perturbations of the energy functional; we establish a direct connection between the discrete-time steepest descents w.r.t. $\phi_n, \ n\in\mathbb{N},$ and the continuous-time gradient flow associated with the $\Gamma$-limit functional $\phi$. This link is formed through an appropriate correlation between time step sizes $\tau\downarrow0$ and parameters $n=n(\tau)\uparrow+\infty$ that solely depends on the velocity of $\Gamma$-convergence  $\phi_n\stackrel{\Gamma}{\to}\phi$. Further, we are able to relax the minimum problems in the corresponding scheme, see \eqref{eq: relaxed MM 1}, and prove the stability statements even for \textit{approximate} discrete-time steepest descents.

\begin{assumption}\label{ass: 1}
We assume 
\begin{enumerate}[label=($\phi_n$\arabic*)]
\item there exist $A, B > 0, \ \mu_\star\in\mathcal{P}_p(\mathbb{R}^d)$ s.t. 
$\phi_n(\cdot) \ \geq \ -A-B\mathcal{W}_p(\cdot, \mu_\star)^p$ for all $n\in\mathbb{N}$, \label{ass: phin1}
\item the combined compactness property 
\begin{equation*}
\sup_{n}\{\phi_n(\mu_n), \mathcal{W}_p(\mu_n, \mu_\star)\} < +\infty \quad \Rightarrow \quad \exists n_k\uparrow+\infty, \mu: \ \mathcal{W}_p(\mu_{n_k}, \mu) \to 0. 
\end{equation*}
\label{ass: phin2}
\end{enumerate} 
and that $\phi$ satisfies \ref{ass: phi1}. 
\end{assumption}

The natural conditions \ref{ass: phin1} and \ref{ass: phin2} guarantee the existence of discrete solutions to \eqref{eq: relaxed MM 1}, \eqref{eq: relaxed MM 2} for $\tau< \big(\frac{1}{pB}\big)^{1/(p-1)}$ and the existence of converging subsequences thereof (cf. Sects. 3.1 and 3.2 in \cite{fleissner2016gamma}).  Since we allow approximate minimizers in \eqref{eq: relaxed MM 1} (with small error term $\gamma_\tau^{(m)} > 0$), we do not need to impose any lower semicontinuity or compactness condition on the single functionals $\phi_n$. 

The study of a relaxed Minimizing Movement scheme along $(\phi_n)_{n}$ as approximation scheme for the gradient flow associated with $\phi$ entails a condition connecting the energy driven steepest descent motion \eqref{eq: relaxed MM 1}, \eqref{eq: relaxed MM 2} with the slope of $\phi$. 

Let $q\in(1,+\infty)$ be the conjugate exponent of $p$ and $\mathcal{Y}^{(p)}_\tau\phi_{n(\tau)}$ denote the $p$-Moreau-Yosida approximation of the functional $\phi_{n(\tau)}$, 
\begin{equation}\label{eq: Yosida}
\mathcal{Y}^{(p)}_\tau\phi_{n(\tau)}(\mu) := \inf_{\nu\in\mathcal{P}_p(\mathbb{R}^d)}\Big\{\phi_{n(\tau)}(\nu) + \frac{1}{p\tau^{p-1}}\mathcal{W}_p(\nu, \mu)^p\Big\}. 
\end{equation}
We assume the following: 
\begin{assumption}\label{ass: 2}
Whenever $\nu_\tau, \nu\in\mathcal{P}_p(\mathbb{R}^d)$ satisfy
\begin{equation*}
\lim_{\tau\to0}\mathcal{W}_p(\nu_\tau, \nu)  =  0, \quad \sup_\tau\phi_{n(\tau)}(\nu_\tau)  < +\infty, 
\end{equation*}
then
\begin{equation}\label{eq: main condition}
\liminf_{\tau\to0}\frac{\phi_{n(\tau)}(\nu_\tau) - \mathcal{Y}^{(p)}_\tau\phi_{n(\tau)}(\nu_\tau)}{\tau} \ \geq \ \frac{1}{q}|\partial^-\phi|^q(\nu). 
\end{equation}
\end{assumption}

Condition \eqref{eq: main condition} plays a central role in our Minimizing Movement approach to $\Gamma$-convergence for gradient flows; as stated in Theorem \ref{thm: MM approach I}, we manage to link the relaxed Minimizing Movement scheme \eqref{eq: relaxed MM 1}, \eqref{eq: relaxed MM 2} along $(\phi_n)_n$ to the gradient flow equation \eqref{eq: differential inclusion} driven by $\phi$ by means of Assumption \ref{ass: 2} and chain rule \eqref{eq: chain rule} from Proposition \ref{prop: 2.1}; please note that \eqref{eq: differential inclusion} represents a weak reformulation of the diffusion equation \eqref{eq: diffusion equation intro} according to Section \ref{sec: 2.1}.

It is noteworthy that our main Assumption \ref{ass: 2} arises quite naturally from the context of gradient flows, which will be clarified by Remark \ref{rem: main ass}. Moreover, there \textit{always} exist appropriate correlations $\tau\mapsto n(\tau)\in\mathbb{N}$ so that $n(\tau)\uparrow +\infty$ as $\tau\downarrow0$ and Assumption \ref{ass: 2} holds true, see Theorem \ref{thm: MM approach II}. 

The following convergence statement constitutes the first part of our theory. 

\begin{theorem}[$\Gamma$-convergence for discrete-time steepest descents I]\label{thm: MM approach I}

We suppose that $\phi, \phi_n: \mathcal{P}_p(\mathbb{R}^d)\to(-\infty, +\infty], \ n\in\mathbb{N},$ satisfy Assumption \ref{ass: 1} and $\phi_n\stackrel{\Gamma}{\to}\phi$ in $(\mathcal{P}_p(\mathbb{R}^d), \mathcal{W}_p)$. 

Let $(\tau_k)_{k\in\mathbb{N}}, \ \tau_k\downarrow0,$ be a sequence of time step sizes, $\bar{\mu}_{\tau_k}$ be discrete solutions \eqref{eq: piecewise constant interpolation} to the relaxed Minimizing Movement scheme \eqref{eq: relaxed MM 1}, \eqref{eq: relaxed MM 2}, associated with parameters $(n(\tau_k))_k, \ n(\tau_k)\in\mathbb{N}, \ n(\tau_k)\uparrow +\infty$, and error terms $\gamma_{\tau_k}^{(m)} = \gamma_{\tau_k}$, $\gamma_{\tau_k}>0$ such that 
\begin{equation}\label{eq: error term}
\lim_{k\to+\infty}\frac{{\gamma_{\tau_k}}}{\tau_k} \ = \ 0. 
\end{equation}
Let $\bar{\mu}_{\tau_k}(0)$ be a recovery sequence \eqref{eq: recovery sequence} for $(\phi_{n(\tau_k)})_k, \ \phi$ and some initial datum $\mu^0\in\{\phi<+\infty\}$. 

\begin{enumerate}[label=(\roman*)]
\item 
There exist a subsequence of time step sizes $(\tau_{k_l})_{l\in\mathbb{N}}, \ \tau_{k_l}\downarrow0,$ and a limit curve $\mu: [0, +\infty)\to\mathcal{P}_p(\mathbb{R}^d), \ \mu(t)=u(t, \cdot)\mathcal{L}^d$, such that
\begin{equation*}
\lim_{l\to+\infty}\mathcal{W}_p(\bar{\mu}_{\tau_{k_l}}(t), \mu(t)) \ = \ 0 \quad \quad \text{ for all } t\ge0. 
\end{equation*}
The curve $\mu$ is locally absolutely continuous in $(\mathcal{P}_p(\mathbb{R}^d), \mathcal{W}_p)$. \label{itm: 1}
\item \label{itm: 2} 
If Assumption \ref{ass: 2} holds true for the choice $(n(\tau_k))_k$, then $\mu$ 
satisfies the energy inequality \eqref{eq: energy inequality} for all $t\ge0$. 

Assuming, in addition, that $\phi\circ\mu$ belongs to $\mathrm{C}([0, +\infty))\cap\mathrm{W}_{1,1}^{\mathrm{loc}}((0, +\infty))$ satisfying $\partial_l\phi(\mu(t))=\{D_l\phi(\mu(t))\}$ and \eqref{eq: chain rule}
for $\mathcal{L}^1$-a.e. $t>0$, the curve $\mu$ and its tangent vector field solve the differential equation \eqref{eq: differential inclusion} and \eqref{eq: subdifferential slope equality} for $\mathcal{L}^1$-a.e. $t>0$ and the energy dissipation equality \eqref{eq: ede}
for all $0\leq s\leq t < +\infty$. In this case, 
\begin{equation}\label{eq: recovery sequence for all t}
\lim_{l\to+\infty}\phi_{n(\tau_{k_l})}(\bar{\mu}_{\tau_{k_l}}(t)) \ = \ \phi(\mu(t)) \quad \text{ for all } t\ge 0. 
\end{equation}   
\end{enumerate}
\end{theorem}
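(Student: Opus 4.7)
I would prove the theorem in two stages, splitting along parts (i) and (ii). Stage one establishes the existence of a convergent subsequence and local absolute continuity of the limit curve; stage two derives the energy inequality \eqref{eq: energy inequality} and then invokes Proposition \ref{prop: 2.1} to upgrade it. Throughout, the plan closely follows the template for discrete-time steepest descents in \cite{AGS08}, extended via the author's earlier analysis in \cite{fleissner2016gamma} (Thms.~3.4, 6.1) to handle $\Gamma$-perturbed energies along the correlation $\tau\mapsto n(\tau)$.

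\textbf{Part (i): compactness.} From the relaxed minimum problem \eqref{eq: relaxed MM 1}, the coercivity \ref{ass: phin1}, and the fact that $\bar{\mu}_{\tau_k}(0)$ is a recovery sequence, one obtains the usual a priori bounds: $\sup_{t\in[0,T], k}\phi_{n(\tau_k)}(\bar{\mu}_{\tau_k}(t))<+\infty$, $\sup_{t\in[0,T],k}\mathcal{W}_p(\bar{\mu}_{\tau_k}(t),\mu_\star)<+\infty$, and the discrete $p$-energy bound $\sum_{m=1}^{M} \mathcal{W}_p(\mu_{\tau_k}^m,\mu_{\tau_k}^{m-1})^p/(p\tau_k^{p-1})\leq C_T$ for $M\tau_k\leq T$. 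Combined with the combined compactness property \ref{ass: phin2}, a refined Ascoli-Arzel\`a argument (Prop.~3.3.1 in \cite{AGS08}) extracts a subsequence converging in $(\mathcal{P}_p(\mathbb{R}^d),\mathcal{W}_p)$ at every $t\geq 0$ to a $p$-absolutely continuous limit curve $\mu$, whose density follows from \ref{ass: phi1} applied at the $\Gamma$-liminf.

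\textbf{Part (ii): energy inequality and upgrade.} Telescoping \eqref{eq: relaxed MM 1} over $m=1,\dots,M$ with $M\tau_k\to t$ yields
\begin{equation*}
\phi_{n(\tau_k)}(\mu_{\tau_k}^M) \ + \ \sum_{m=1}^{M}\frac{\mathcal{W}_p(\mu_{\tau_k}^m,\mu_{\tau_k}^{m-1})^p}{p\tau_k^{p-1}} \ + \ \sum_{m=1}^{M}\bigl[\phi_{n(\tau_k)}(\mu_{\tau_k}^{m-1})-\mathcal{Y}^{(p)}_{\tau_k}\phi_{n(\tau_k)}(\mu_{\tau_k}^{m-1})\bigr] \ \leq \ \phi_{n(\tau_k)}(\bar{\mu}_{\tau_k}(0))+M\gamma_{\tau_k}.
\end{equation*}
On the right-hand side, $\phi_{n(\tau_k)}(\bar{\mu}_{\tau_k}(0))\to\phi(\mu^0)$ by the recovery hypothesis and $M\gamma_{\tau_k}\leq (t/\tau_k)\gamma_{\tau_k}\to 0$ by \eqref{eq: error term}. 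On the left, the first term is handled by the $\Gamma$-liminf \eqref{eq: Gamma-liminf}; the second is dispatched by lower semicontinuity of $\int_0^t|\mu'|^p$ along discrete piecewise-constant interpolations (Thm.~3.4 in \cite{fleissner2016gamma}); the third, which is the crux, is handled by Assumption \ref{ass: 2} via a Fatou-type argument to produce $\tfrac{1}{q}\int_0^t|\partial^-\phi|^q(\mu(r))\,\mathrm{d}r$ (this is exactly the role of Thm.~6.1 in \cite{fleissner2016gamma}). Combining these four limits gives \eqref{eq: energy inequality}. Under the additional chain rule hypothesis, Proposition \ref{prop: 2.1} immediately provides the equivalences \eqref{eq: ede}, \eqref{eq: differential inclusion}, \eqref{eq: subdifferential slope equality}. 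Finally, \eqref{eq: recovery sequence for all t} follows by a squeeze: the $\Gamma$-liminf gives $\phi(\mu(t))\leq\liminf_l\phi_{n(\tau_{k_l})}(\bar{\mu}_{\tau_{k_l}}(t))$, while the same telescoping inequality specialized to $[0,t]$ combined with the EDE forces $\limsup_l\phi_{n(\tau_{k_l})}(\bar{\mu}_{\tau_{k_l}}(t))\leq\phi(\mu(t))$.

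\textbf{Main obstacle.} The delicate step is the lower-limit passage for the Moreau-Yosida gap sum. Assumption \ref{ass: 2} only delivers a pointwise liminf bound; converting the Riemann-type sum $\sum_m \tau_k\cdot [\phi_{n(\tau_k)}-\mathcal{Y}^{(p)}_{\tau_k}\phi_{n(\tau_k)}](\mu_{\tau_k}^{m-1})/\tau_k$ into the integral of $\tfrac{1}{q}|\partial^-\phi|^q$ along $\mu$ requires a careful selection of almost-all times and a diagonal/Fatou argument compatible with the piecewise-constant interpolation, because the base points $\mu_{\tau_k}^{m-1}$ shift with $k$. Once this obstacle is overcome, the rest of the argument is a clean application of Proposition \ref{prop: 2.1} and the chain rule hypothesis.
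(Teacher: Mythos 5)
Your proposal is correct and follows essentially the same route as the paper: the same telescoped one-step inequality, the same four-way limit passage (recovery sequence, error term $o(\tau)$, $\Gamma$-liminf, Fatou on the Moreau--Yosida gap via Assumption \ref{ass: 2}), and the same application of Proposition \ref{prop: 2.1} plus the squeeze for \eqref{eq: recovery sequence for all t}. The only slip is attributional: the Fatou-type passage for the gap sum is part of Thm.~3.4 in \cite{fleissner2016gamma}, while Thm.~6.1 there concerns the existence of admissible correlations $\tau\mapsto n(\tau)$ (i.e.\ Theorem \ref{thm: MM approach II}), not this lower-limit step.
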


\begin{proof}
It is not difficult to extend Thm. 3.4 in \cite{fleissner2016gamma} to the case $p\in(1,+\infty)$. Hence, we refer thereto for statement \ref{itm: 1} and the proof of the energy inequality \eqref{eq: energy inequality} under Assumption \ref{ass: 2}, remarking that 
\begin{eqnarray*}
\phi_{n(\tau_k)}(\bar{\mu}_{\tau_k}(0)) - \phi_{n(\tau_k)}(\bar{\mu}_{\tau_k}(t)) \ = \ \sum_{m=1}^{N_{\tau_k}}{\Big[\phi_{n(\tau_k)}(\bar{\mu}_{\tau_k}^{(m-1)})-\phi_{n(\tau_k)}(\bar{\mu}_{\tau_k}^m)\Big]} \\
\ge \ \sum_{m=1}^{N_{\tau_k}}{\Big[\phi_{n(\tau_k)}(\bar{\mu}_{\tau_k}^{(m-1)})-\mathcal{Y}^{(p)}_{\tau_k}\phi_{n(\tau_k)}(\bar{\mu}_{\tau_k}^{(m-1)}) - \gamma_{\tau_k} + \frac{1}{p\tau_k^{p-1}}\mathcal{W}_p(\bar{\mu}^m_{\tau_k}, \bar{\mu}^{(m-1)}_{\tau_k})^p\Big]} \\
\ge \ \int_0^{t-\tau_k}{\frac{\phi_{n(\tau_k)}(\bar{\mu}_{\tau_k}(r))-\mathcal{Y}^{(p)}_{\tau_k}\phi_{n(\tau_k)}(\bar{\mu}_{\tau_k}(r))}{\tau_k}\mathrm{d}r}  +  \frac{1}{p}\int_0^t{|\bar{\mu}_{\tau_k}'|^p(r)\mathrm{d}r}  -  (N_{\tau_k}\tau_k)\cdot\frac{\gamma_{\tau_k}}{\tau_k}
\end{eqnarray*}
for $t>0$, $N_{\tau_k}\in\mathbb{N}$ s.t. $t\in((N_{\tau_k}-1)\tau_k, N_{\tau_k}\tau_k]$, and $\bar{\mu}_{\tau_k}^m := \bar{\mu}_{\tau_k}(m\tau_k)$, 
\begin{equation*}
|\bar{\mu}_{\tau_k}'|^p(r) \ := \ \frac{\mathcal{W}_p(\bar{\mu}_{\tau_k}^m, \bar{\mu}_{\tau_k}^{m-1})^p}{\tau_k^p} \quad \text{if } r\in((m-1)\tau_k, m\tau_k]; 
\end{equation*}
the significance of Assumption \ref{ass: 2} and condition \eqref{eq: error term} on the error term becomes apparent as we let the time step sizes $\tau_k\downarrow0$ (up to a subsequence) in these energy inequalities for the discrete solutions $\bar{\mu}_{\tau_k}$, see the proof of Thm. 3.4 in \cite{fleissner2016gamma} for more details. 

It follows from \ref{ass: phin1}, \ref{ass: phin2} and the $\Gamma$-convergence of $\phi_n$ to $\phi$ w.r.t. $ \mathcal{W}_p$ that $\phi$ satisfies \ref{ass: phi2}, \ref{ass: phi3}, \ref{ass: phi4}. We infer from Proposition \ref{prop: 2.1} that under the additional assumptions on $\phi\circ\mu$ made in the second part of statement \ref{itm: 2}, the limit curve $\mu$ is a solution to \eqref{eq: differential inclusion}, \eqref{eq: subdifferential slope equality} and \eqref{eq: ede}. Note that for our purposes, it is sufficient to consider $\partial_l\phi$ at points $\nu\in\mu([0, +\infty))\subset\{\phi<+\infty\}$. Finally, \eqref{eq: recovery sequence for all t} follows from the facts that $\lim_{k\to+\infty}\phi_{n(\tau_k)}(\bar{\mu}_{\tau_{k}}(0)) = \phi(\mu(0))$ and 
\begin{eqnarray*}
\phi(\mu(0)) - \phi(\mu(t)) &\geq& \limsup_{l\to+\infty}\Big[\phi_{n(\tau_{k_l})}(\bar{\mu}_{\tau_{k_l}}(0)) - \phi_{n(\tau_{k_l})}(\bar{\mu}_{\tau_{k_l}}(t))\Big] \\
&\ge& \liminf_{l\to+\infty}\Big[\phi_{n(\tau_{k_l})}(\bar{\mu}_{\tau_{k_l}}(0)) - \phi_{n(\tau_{k_l})}(\bar{\mu}_{\tau_{k_l}}(t))\Big] \\
&\ge&  \frac{1}{q} \int^{t}_{0}{|\partial^-\phi|^q(\mu(r))\mathrm{d}r} \ + \ \frac{1}{p} \int^{t}_{0}{|\mu'|^p(r) \mathrm{d}r} \\
&=& \phi(\mu(0)) - \phi(\mu(t))
\end{eqnarray*}
by \eqref{eq: ede} and the proof of \eqref{eq: energy inequality} in \cite{fleissner2016gamma}.
\end{proof}

The additional assumption from the second part of \ref{itm: 2} that $\phi\circ\mu$ belongs to $\mathrm{C}([0, +\infty))\cap\mathrm{W}_{1,1}^{\mathrm{loc}}((0, +\infty))$ satisfying $\partial_l\phi(\mu(t))=\{D_l\phi(\mu(t))\}$ and the chain rule \eqref{eq: chain rule} $\mathcal{L}^1$-a.e. typically arises out of the theory of gradient flows in $(\mathcal{P}_p(\mathbb{R}^d), \mathcal{W}_p)$, cf. Section \ref{sec: 2.1}, Proposition \ref{prop: 2.1}, Examples \ref{ex: convex case} and \ref{ex: chain rule}, Proposition \ref{prop: chain rule}. 

Before giving more information on our main Assumption \ref{ass: 2}, we consider Assumption \ref{ass: 1} and the error terms $\gamma_\tau^{(m)}$.
\begin{remark}[Relaxation of \eqref{eq: recovery sequence}, \ref{ass: phin2}, \ref{ass: phi1}]\label{rem: relaxation phin}
We present variations on Theorem \ref{thm: MM approach I} assuming \ref{ass: phi3} and that for small $\tau>0$ and all $\mu\in\mathcal{P}_p(\mathbb{R}^d)$ there exists a solution to the minimum problem
\begin{equation*}
\min_{\bar{\mu}\in\mathcal{P}_p(\mathbb{R}^d)}\Big\{\phi(\bar{\mu}) + \frac{1}{p\tau^{p-1}}\mathcal{W}_p(\bar{\mu}, \mu)^p\Big\}, 
\end{equation*}
so that \eqref{eq: subdifferential slope inequality} and 
the second part of Proposition \ref{prop: 2.1} are still applicable. 
\begin{enumerate}[label=(\roman*)]
\item A ``partial $\Gamma$-convergence'' of $\phi_n$ to $\phi$ is sufficient to obtain statements \ref{itm: 1} and \ref{itm: 2} from Theorem \ref{thm: MM approach I}: we need the existence of a recovery sequence \eqref{eq: recovery sequence} solely for the initial datum $\mu^0$, cf. Sect. 3.1, Thm. 3.4 in \cite{fleissner2016gamma}.
\item Theorem \ref{thm: MM approach I} still holds true (with weak instead of $\mathcal{W}_p$-convergence of the discrete solutions) if we use the topology induced by weak convergence as an auxiliary topology omitting \ref{ass: phin2} and correspondingly adapting \eqref{eq: Gamma-liminf}, \eqref{eq: recovery sequence}, Definitions \ref{def: slopes} and \ref{def: subdifferentials} of slopes and subdifferentials, the definition of the recovery sequence of initial data and Assumption \ref{ass: 2}, cf. Sects. 2, 3.1 and 3.2 in \cite{fleissner2016gamma} and Remark \ref{rem: relaxation}. 
\end{enumerate}
Condition \ref{ass: phi1} can be omitted, see Remark \ref{rem: relaxation}. 
\end{remark}

\begin{remark}[The error terms]\label{rem: error term}
The error order $o(\tau)$ in \eqref{eq: error term} is optimal, see Ex. 4.5 in \cite{fleissner2016gamma}. Further, the proof of Theorem \ref{thm: MM approach I} shows that we can extend our theory to a non-uniform distribution of the error terms $(\gamma_{\tau_k}^{(m)})_{m\in\mathbb{N}}$ replacing condition \eqref{eq: error term}, $\gamma_{\tau_k}^{(m)}=\gamma_{\tau_k}>0$, by the general condition $\gamma_{\tau_k}^{(m)} > 0$, 
\begin{equation}\label{eq: error term non-uniform}
\lim_{k\to+\infty} \sum_{m=1}^{N_{\tau_k}}{\gamma_{\tau_k}^{(m)}} \ = \ 0 \quad \text{for every } N_{\tau_k}\in\mathbb{N} \text{ s.t. } (N_{\tau_k}\tau_k)_{k\in\mathbb{N}} \text{ is bounded,}
\end{equation}
cf. Sect. 3.3 in \cite{fleissner2016gamma}. 
\end{remark}

The role of Assumption \ref{ass: 2} manifests itself in the proof of Theorem \ref{thm: MM approach I}. The next remark deals with special cases leading to a better understanding of Assumption \ref{ass: 2} as a natural condition. 

\begin{remark}[Comments on Assumption \ref{ass: 2}]\label{rem: main ass}

\begin{enumerate}[label=(\roman*)]
\item Assumption \ref{ass: 2} holds true for the case $\phi_n\equiv\phi$:  if $\phi: \mathcal{P}_p(\mathbb{R}^d)\to(-\infty, +\infty]$ satisfies \ref{ass: phi2}, \ref{ass: phi3}, \ref{ass: phi4}, 
then we have 
\begin{equation*}
\liminf_{\tau\to0}\frac{\phi(\nu_\tau) - \mathcal{Y}^{(p)}_\tau\phi(\nu_\tau)}{\tau} \ \geq \ \frac{1}{q}|\partial^-\phi|^q(\nu)
\end{equation*}
whenever
$\lim_{\tau\to0}\mathcal{W}_p(\nu_\tau, \nu)  =  0, \ \sup_\tau\phi(\nu_\tau)  < +\infty$, cf. Prop. 4.1 in \cite{fleissner2016gamma}; in consequence of Theorem \ref{thm: MM approach I}, we may allow a \textit{relaxed} form of minimization in the classical Minimizing Movement scheme \eqref{eq: general MM scheme}, \eqref{eq: MM p} associated with a single functional still obtaining gradient flow solutions w.r.t. $\phi$.  \label{rem: main ass i}

\item If $\phi_n\stackrel{\Gamma}{\to}\phi$ in $(\mathcal{P}_p(\mathbb{R}^d), \mathcal{W}_p)$, $\phi_n, \phi$ are lower semicontinuous and displacement convex, satisfy \ref{ass: phin1} and for small $\tau>0$ and all $\mu\in\mathcal{P}_p(\mathbb{R}^d)$ and $n\in\mathbb{N},$ there exists a solution to the minimum problem in the definition \eqref{eq: Yosida} of $\mathcal{Y}_\tau^{(p)}\phi_{n}(\mu)$, then Assumption \ref{ass: 2} is satisfied for \textit{every} choice $n=n(\tau)\uparrow+\infty$ (as $\tau\downarrow0$), 
cf. second part of Sect. 5, (5.4) and Prop. 5.2 in \cite{fleissner2016gamma}. \label{rem: main ass ii}

\item The well-known Serfaty-Sandier approach \cite{sandier2004gamma, serfaty2011gamma} offers the underlying structure of special cases $\phi_n\stackrel{\Gamma}{\to}\phi$ in which continuous-time gradient flow solutions w.r.t. $\phi_n$ converge to the continuous-time gradient flow associated with $\phi$; the Serfaty-Sandier theory relies on the assumption that the corresponding slopes satisfy a $\Gamma$-liminf inequality (often referred to as `Serfaty-Sandier condition'). In these cases, our Assumption \ref{ass: 2} is satisfied for \textit{every} choice $n=n(\tau)\uparrow+\infty$ (as $\tau\downarrow0$), cf. first part of Sect. 5 and Prop. 5.1 in \cite{fleissner2016gamma}. \label{rem: main ass iii}
\end{enumerate}

All the statements from \ref{rem: main ass i}, \ref{rem: main ass ii}, \ref{rem: main ass iii} can be extended to the case that the topology induced by weak convergence is included as auxiliary topology, cf. Remarks \ref{rem: relaxation phin} and \ref{rem: relaxation}, Sects. 4 and 5 in \cite{fleissner2016gamma}. 
\end{remark}

The next part of our theory treats the validation of Assumption \ref{ass: 2}. 
\begin{theorem}[$\Gamma$-convergence for discrete-time steepest descents II]\label{thm: MM approach II}
Let the sequence of functionals $\phi_n: \mathcal{P}_p(\mathbb{R}^d)\to(-\infty, +\infty], \ n\in\mathbb{N},$ satisfy \ref{ass: phin1}, \ref{ass: phin2}, and $\Gamma$-converge to $\phi: \mathcal{P}_p(\mathbb{R}^d)\to(-\infty, +\infty]$ in $(\mathcal{P}_p(\mathbb{R}^d), \mathcal{W}_p)$. 

Then there exists a sequence $(n_\tau)_{\tau >0}$ in $\mathbb{N}, \ n_\tau \uparrow +\infty$ as $\tau\downarrow0$, such that Assumption \ref{ass: 2} holds good for all choices $(n(\tau))_{\tau>0}, \ n(\tau)\in\mathbb{N},$ that satisfy $n(\tau)\geq n_\tau$.
\end{theorem}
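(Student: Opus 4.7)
The plan is to build $(n_\tau)_{\tau>0}$ by a diagonal selection combining Remark~\ref{rem: main ass}\ref{rem: main ass i} (which asserts Assumption~\ref{ass: 2} in the trivial case $\phi_n\equiv\phi$) with a quantitative version of the convergence $\mathcal{Y}^{(p)}_\tau\phi_n\to\mathcal{Y}^{(p)}_\tau\phi$ as $n\to\infty$. I would first establish this Moreau--Yosida convergence for every fixed $\tau>0$ and $\mu\in\mathcal{P}_p(\mathbb{R}^d)$: the bound $\limsup_n\mathcal{Y}^{(p)}_\tau\phi_n(\mu)\leq\mathcal{Y}^{(p)}_\tau\phi(\mu)$ follows from plugging a recovery sequence (for $\phi_n\stackrel{\Gamma}{\to}\phi$) into the definition of $\mathcal{Y}^{(p)}_\tau\phi_n(\mu)$ at any near-optimizer of $\mathcal{Y}^{(p)}_\tau\phi(\mu)$, while the reverse inequality is obtained by applying $\Gamma$-liminf to near-optimizers of $\mathcal{Y}^{(p)}_\tau\phi_n(\mu)$, which are $\mathcal{W}_p$-precompact thanks to \ref{ass: phin1}--\ref{ass: phin2}. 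Continuity of $\mu\mapsto\mathcal{Y}^{(p)}_\tau\phi(\mu)$ in $\mathcal{W}_p$ together with equi-coercivity then upgrades pointwise to uniform convergence on $\mathcal{W}_p$-compact subsets of $\mathcal{P}_p(\mathbb{R}^d)$.

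Next I would perform the diagonal selection. Fix $\tau_k\downarrow 0$ and a sequence of $\mathcal{W}_p$-compact test sets $K_k\uparrow\mathcal{P}_p(\mathbb{R}^d)$, for instance $K_k:=\{\mu:\mathcal{W}_p(\mu,\mu_\star)\leq k,\ \mathcal{Y}^{(p)}_{\tau_k}\phi(\mu)\leq k\}$. I pick $n_{\tau_k}\in\mathbb{N}$ inductively with $n_{\tau_k}\uparrow+\infty$ so that
\[
\sup_{\mu\in K_k}\bigl|\mathcal{Y}^{(p)}_{\tau_k}\phi_n(\mu)-\mathcal{Y}^{(p)}_{\tau_k}\phi(\mu)\bigr|\ \leq\ \tau_k^2\qquad\forall\,n\geq n_{\tau_k},
\]
and set $n_\tau:=n_{\tau_k}$ for $\tau\in[\tau_{k+1},\tau_k)$. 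For any $n(\tau)\geq n_\tau$ and any $\nu_\tau\stackrel{\mathcal{W}_p}{\to}\nu$ with $\sup_\tau\phi_{n(\tau)}(\nu_\tau)<+\infty$, coercivity \ref{ass: phin1} places $\nu_\tau$ eventually in some $K_k$ and $\Gamma$-liminf yields $\phi(\nu)<+\infty$. I would then introduce a comparison sequence $\tilde\nu_\tau$ consisting of near-minimizers of the $\phi$-Moreau--Yosida functional with base-point $\nu_\tau$; standard a priori estimates (using $\nu$ as a test in the Moreau--Yosida problem, together with the lower bound \ref{ass: phin1} on $\phi$) give $\tilde\nu_\tau\to\nu$ in $\mathcal{W}_p$ and $\sup_\tau\phi(\tilde\nu_\tau)<+\infty$, so Remark~\ref{rem: main ass}\ref{rem: main ass i} applied to $\phi$ along $\tilde\nu_\tau$ delivers the required lower bound on $[\phi(\tilde\nu_\tau)-\mathcal{Y}^{(p)}_\tau\phi(\tilde\nu_\tau)]/\tau$, which is then transferred back to $(\phi_{n(\tau)},\nu_\tau)$ via the diagonal estimate.

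The crux is this last transfer, because $\Gamma$-liminf only controls $\phi_{n(\tau)}(\nu_\tau)$ asymptotically and $\phi(\nu_\tau)$ itself may equal $+\infty$, ruling out any pointwise comparison of $\phi_{n(\tau)}$ with $\phi$ at $\nu_\tau$. The detour through the Moreau--Yosida level bypasses this obstacle because the diagonal estimate applies uniformly at the minimization level, where the two functionals have been regularized enough to be uniformly close on $K_k$. This is essentially the strategy of the proof of Thm.~6.1 in \cite{fleissner2016gamma}, given there for $p=2$, and it extends to general $p\in(1,+\infty)$ without substantial change.
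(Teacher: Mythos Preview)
The paper's own proof is a one-line citation: it invokes Thm.~6.1 of \cite{fleissner2016gamma}, noting only that $(\mathcal{P}_p(\mathbb{R}^d),\mathcal{W}_p)$ is separable and complete. So you are not so much giving an alternative proof as attempting to reconstruct the cited argument, which you acknowledge in your last sentence.

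Your sketch is in the right spirit but contains a genuine gap at the compact-exhaustion step. The sets
\[
K_k:=\{\mu:\mathcal{W}_p(\mu,\mu_\star)\leq k,\ \mathcal{Y}^{(p)}_{\tau_k}\phi(\mu)\leq k\}
\]
are \emph{not} $\mathcal{W}_p$-compact: closed Wasserstein balls in $\mathcal{P}_p(\mathbb{R}^d)$ fail to be compact (they are only weakly sequentially compact), and the additional Moreau--Yosida bound does not repair this, since $\mathcal{Y}^{(p)}_{\tau_k}\phi$ is finite and continuous everywhere, so the sublevel constraint cuts out nothing once $k$ is large. For the same reason there is no exhaustion $K_k\uparrow\mathcal{P}_p(\mathbb{R}^d)$ by $\mathcal{W}_p$-compacta at all --- the space is not $\sigma$-compact. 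Consequently your uniform estimate $\sup_{\mu\in K_k}|\mathcal{Y}^{(p)}_{\tau_k}\phi_n(\mu)-\mathcal{Y}^{(p)}_{\tau_k}\phi(\mu)|\leq\tau_k^2$ cannot be obtained in the way you suggest, and the diagonal selection collapses.

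This is precisely why the paper singles out \emph{separability} as the relevant structural hypothesis. The argument in \cite{fleissner2016gamma} runs the diagonalization over a countable dense set rather than over a compact exhaustion; one then uses that the Moreau--Yosida approximations $\mu\mapsto\mathcal{Y}^{(p)}_\tau\phi_n(\mu)$ are locally equi-Lipschitz (uniformly in $n$, with constants depending only on $\tau$, the coercivity data in \ref{ass: phin1}, and a Wasserstein-radius bound) to pass from control at dense points to control at an arbitrary converging sequence $\nu_\tau\to\nu$. Your final transfer step, replacing $\phi_{n(\tau)}(\nu_\tau)$ by something comparable to $\phi$ via the auxiliary $\tilde\nu_\tau$, is also left vague; note that $\phi_{n(\tau)}(\nu_\tau)$ and $\phi(\nu_\tau)$ need not be comparable at all, so the passage must stay entirely at the Moreau--Yosida level, and the comparison sequence $\tilde\nu_\tau$ as you describe it is not obviously doing the work you claim.
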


\begin{proof}
We may apply Thm. 6.1 from \cite{fleissner2016gamma} since $(\mathcal{P}_p(\mathbb{R}^d), \mathcal{W}_p)$ is separable and complete (see e.g. [\cite{AGS08}, Prop. 7.1.5]). 
\end{proof}

The sequence $(n_\tau)_{\tau>0}$ from Theorem \ref{thm: MM approach II} solely depends on the velocity of $\Gamma$-convergence $\phi_n\stackrel{\Gamma}{\to} \phi$ (cf. Sect. 6 in \cite{fleissner2016gamma}). Taken together, Theorem \ref{thm: MM approach II} and Theorem \ref{thm: MM approach I} establish a robust stability theory for steepest descents under $\Gamma$-convergence of the energy functionals: under quite natural and mild conditions on $(\phi_n)_n$ and $\phi$, there exists a correlation $\tau\mapsto n(\tau)\in\mathbb{N}$ between time step sizes and parameters such that the corresponding joint discrete-time steepest descent motion along $(\phi_n)_n$ converges to the gradient flow of $\phi$. In actual fact, by Theorem \ref{thm: MM approach II}, there exist infinitely many such appropriate correlations precisely quantified by Assumption \ref{ass: 2} and completely independent of initial data $\mu_\tau^0, \mu^0 \in\mathcal{P}_p(\mathbb{R}^d)$ and of (approximate) minimizers $\mu_\tau^m$ in the (relaxed) Minimizing Movement scheme.

In \cite{fleissner2016gamma}, general methods for determining the right choices $n=n(\tau)$ with regard to Assumption \ref{ass: 2} are illustrated, see Sects. 7 and 8 therein. Also in this paper, we demonstrate the exact computation of appropriate correlations $\tau\mapsto n(\tau)$ as it is a crucial ingredient in the proofs of convergence statements for our Kernel-Density-Estimator Minimizing Movement Scheme (KDE-MM-Scheme), see Sect. \ref{sec: 3.3}. 
 
\subsection{KDE Convergence Rates}\label{sec: 2.3}

This section provides useful information on the asymptotic behaviour of Kernel Density Estimators (see Definition \ref{def: kernel density estimator}) as the sample size $n\uparrow+\infty$ and the bandwidth $h\downarrow0$ simultaneously, including uniform convergence rates. 
 
Please note that the expected value of a KDE $\hat{\rho}_{n,h}$ at some $x\in\mathbb{R}^d$ is equal to the value of the convolution between $\K_h$ and $\rho$ at $x$: 
\begin{equation}\label{eq: mean of kde}
\mathbb{E}[\hat{\rho}_{n,h}(x)] \ = \ \int_{\mathbb{R}^d}{\K_h(x-y)\rho(y)\mathrm{d}y}. 
\end{equation}

\begin{proposition}[cf. Thm. 12, Cor. 15, Lem. 14 and Thm. 27 in \cite{kim2019uniform}]\label{prop: convergence rates}
Let $\K$ be a Lipschitz continuous kernel function according to Definition \ref{def: kernel function} with compact support $\mathrm{spt}(\K)\subset\{|x|\leq1\}$ and $(\hat{\rho}_{n,h})_{n\in\mathbb{N}, h >0}$ an associated family of Kernel Density Estimators \eqref{eq: kernel density estimator} for a probability density $\rho$ on $\mathbb{R}^d$. Suppose that $\rho$ has compact support, i.e. $\mathrm{spt}(\rho)\subset\{|x|\leq R\}$ for some $R>0$. Every sample size $n\in\mathbb{N}$ is assigned a bandwidth $h(n)\in(0, \frac{9}{10}), \ \lim_{n\to+\infty}h(n) = 0$.

There exists a constant $C_{\K, \rho}$ (depending on $\K, R, d$ and $\rho$) so that for every $\alpha\in(0,1)$ and $n\in\mathbb{N}$ the following holds good with probability at least $1-\alpha$: 
\begin{equation}\label{eq: convergence rates 1} 
\sup_{x\in\mathbb{R}^d}\Big|\hat{\rho}_{n, h(n)}(x) - \mathbb{E}[\hat{\rho}_{n,h(n)}(x)]\Big| \ \leq \ C_{\K,\rho}\cdot \mathfrak{R}(n, \alpha)
\end{equation}
where
\begin{equation}\label{eq: convergence rates 2}
\mathfrak{R}(n, \alpha):= \sqrt{\frac{\log(1/h(n))}{nh(n)^{2d}}}+\sqrt{\frac{\log(1/\alpha)}{nh(n)^{2d}}} + \frac{\log(1/h(n))}{nh(n)^d} + \frac{\log(1/\alpha)}{nh(n)^d}. 
\end{equation}
\end{proposition}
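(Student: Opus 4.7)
The plan is to derive the bound as a direct consequence of the uniform KDE concentration machinery developed in \cite{kim2019uniform}: the stated inequality is essentially a repackaging of their Theorem 12 combined with Lemma 14, Corollary 15 and Theorem 27. Accordingly, I would not reprove the concentration theory from scratch but rather verify that our hypotheses on $\K$, $\rho$ and the bandwidth sequence $(h(n))$ meet theirs, and then invoke their estimates.

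The first step is to view the deviation as a centred empirical process
\[
\hat{\rho}_{n,h(n)}(x) - \mathbb{E}[\hat{\rho}_{n,h(n)}(x)] \;=\; \frac{1}{n}\sum_{i=1}^{n} \bigl( \K_{h(n)}(x-X_i) - \mathbb{E}[\K_{h(n)}(x-X_i)] \bigr)
\]
indexed by $x\in\mathbb{R}^d$ over the function class $\mathcal{F}_h := \{y \mapsto \K_h(x-y) : x \in \mathbb{R}^d\}$. Because $\K$ is Lipschitz and compactly supported in $\{|x|\le 1\}$, the class $\mathcal{F}_h$ is of VC type with envelope bounded by $\|\K\|_\infty h^{-d}$ and with uniform entropy numbers controlled independently of $h$; this matches exactly the setting of Lem. 14 in \cite{kim2019uniform}. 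The compact support of $\rho$ enters only to absorb $h$-independent constants (mass and diameter of $\mathrm{spt}(\rho)$) into the final constant $C_{\K,\rho}$.

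The second step is to apply the Talagrand-type concentration inequality underlying Thm. 12 of \cite{kim2019uniform} to $\mathcal{F}_{h(n)}$. The worst-case variance proxy $\sigma^2 \lesssim h^{-2d}$, obtained from
\[
\mathrm{Var}(\K_h(x-X)) \;\le\; \|\K_h\|_\infty \cdot \mathbb{E}[\K_h(x-X)] \;\le\; \|\K_h\|_\infty^{2} \;\lesssim\; h^{-2d},
\]
combined with the entropy integral on $\mathcal{F}_h$, produces the Gaussian fluctuation term $\sqrt{\log(1/h)/(nh^{2d})}$, while the $\alpha$-dependent Gaussian piece yields $\sqrt{\log(1/\alpha)/(nh^{2d})}$; the two Bousquet/Bernstein corrections induced by the envelope $\|\K\|_\infty h^{-d}$ provide the remaining contributions $\log(1/h)/(nh^d)$ and $\log(1/\alpha)/(nh^d)$. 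Summing these four terms recovers $\mathfrak{R}(n,\alpha)$ up to a multiplicative constant $C_{\K,\rho}$ depending only on $d$, $R$, $\|\K\|_\infty$, the Lipschitz constant of $\K$ and the total mass of $\rho$ restricted to $\{|x|\le R\}$.

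The main obstacle is not analytic but bookkeeping: one must keep careful track of how the factor $\log(1/h)$, rather than the more commonly seen $\log n$, emerges from the uniform entropy integral on $\mathcal{F}_h$ (this is precisely the point of Cor. 15 and Thm. 27 in \cite{kim2019uniform}, which exploit the scaling relation $h(n)\in(0,9/10)$ and the compact support of $\rho$), and how the compact supports of $\K$ and $\rho$ combine to close up all $h$-independent factors into the single constant $C_{\K,\rho}$. Once these verifications are made, the desired inequality follows verbatim from the cited estimates.
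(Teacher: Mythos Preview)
Your proposal is appropriate: the paper does not supply its own proof of this proposition at all, but simply records it as a consequence of Thm.~12, Cor.~15, Lem.~14 and Thm.~27 in \cite{kim2019uniform} (the citation is already in the proposition's header). Your sketch---viewing the deviation as a centred empirical process over the VC-type class $\{y\mapsto \K_h(x-y):x\in\mathbb{R}^d\}$, invoking the Talagrand-type concentration underlying those results, and reading off the four terms of $\mathfrak{R}(n,\alpha)$ from the Gaussian and Bernstein contributions---is exactly the content of the cited estimates, so there is nothing to compare against and no gap to flag.
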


\begin{remark}[Uniform convergence rates for general probability distributions and for KDE derivatives]\label{rem: fourth order}
It is noteworthy that in \cite{kim2019uniform} (cf. Thm. 12 and Cor. 15 therein) such uniform convergence rates \eqref{eq: convergence rates 1} are also established for densities with unbounded support, general probability distributions  and a wider class of kernel functions (involving more parameters than in Proposition \ref{prop: convergence rates}).

Moreover, if $\K$ is Lipschitz continuous and everywhere differentiable, then so is $x\mapsto\mathbb{E}[\hat{\rho}_{n,h}(x)]$ with $\nabla\mathbb{E}[\hat{\rho}_{n,h}(x)] = (\nabla\K_h\ast\rho)(x)$;  in Sect. 6  in \cite{kim2019uniform}, convergence rates for $\sup_{x\in\mathbb{R}^d}\Big|\nabla\hat{\rho}_{n, h(n)}(x) - \nabla\mathbb{E}[\hat{\rho}_{n,h(n)}(x)]\Big|$ similar to Proposition \ref{prop: convergence rates} are proved. They play a crucial role in the rigorous mathematical study of the KDE-MM-Scheme for fourth order diffusion equations. 
\end{remark}

Under the assumptions on $\K$ from Proposition \ref{prop: convergence rates}, the KDE expected value function $x\mapsto\mathbb{E}[\hat{\rho}_{n,h}(x)]$ defined in \eqref{eq: mean of kde} itself is a Lipschitz continuous probability density and the KDEs $\hat{\rho}_{n,h}$ are almost everywhere asymptotically unbiased, i.e.
\begin{equation}\label{eq: asymptotically unbiased}
\lim_{n\to+\infty} \mathbb{E}[\hat{\rho}_{n,h(n)}(x)] \ = \ \rho(x) \quad \text{ for } \mathcal{L}^d\text{-a.e. } x\in\mathbb{R}^d
\end{equation}
whenever $h(n)\downarrow 0$ as $n\uparrow+\infty$ (see e.g. Thm. 7 in Appx. C in \cite{evans2010partial}). Moreover, Lem. 7.1.10 in \cite{AGS08} and Prop. 7.10 in \cite{Villani03} provide us with the estimates
\begin{equation}\label{eq: Wasserstein convolution estimate}
\mathcal{W}_p(\mu, \mathbb{E}[\hat{\rho}_{n,h}(x)]\mathcal{L}^d)^p \ \leq \ h^p \cdot \int_{\mathbb{R}^d}{|x|^p\K(x)\mathrm{d}x}
\end{equation}
and 
\begin{equation}\label{eq: Wasserstein BV estimate}
\mathcal{W}_p(\hat{\mu}_{n, h}, \mathbb{E}[\hat{\rho}_{n,h}(x)]\mathcal{L}^d)^p \ \leq \ 2^{p-1}\int_{\mathbb{R}^d}{|x|^p|\hat{\rho}_{n,h}(x) - \mathbb{E}[\hat{\rho}_{n,h}(x)]|\mathrm{d}x}
\end{equation}
respectively, for $\mu:=\rho\mathcal{L}^d\in\mathcal{P}_p(\mathbb{R}^d)$ and associated KDE probability measure $\hat{\mu}_{n,h}:=\hat{\rho}_{n,h}\mathcal{L}^d$. 
Suitable choices for $h(n)$ and $\alpha=\alpha(n)$ in \eqref{eq: convergence rates 1} and \eqref{eq: convergence rates 2} s.t. 
\begin{equation*}
\sum_{n\in\mathbb{N}}{\alpha(n)} \ < \ +\infty, \quad \lim_{n\to+\infty}\mathfrak{R}(n, \alpha(n)) \ = \ 0,
\end{equation*}
an application of Borel-Cantelli lemma and \eqref{eq: asymptotically unbiased}, \eqref{eq: Wasserstein convolution estimate}, \eqref{eq: Wasserstein BV estimate} finally show both the almost everywhere strong consistency of the corresponding KDEs, i.e.
\begin{equation*}
\lim_{n\to+\infty} \hat{\rho}_{n,h(n)}(x) \ = \ \rho(x) \quad \text{for }\mathcal{L}^d\text{-a.e. } x\in\mathbb{R}^d \quad \text{a.s. ,}
\end{equation*}
and their strong consistency in $(\mathcal{P}_p(\mathbb{R}^d), \mathcal{W}_p)$, i.e. 
\begin{equation*}
\lim_{n\to+\infty}\mathcal{W}_p(\hat{\mu}_{n, h(n)}, \mu) \ = \ 0 \quad\quad\text{ a.s..}
\end{equation*}

This precise study of the asymptotic behaviour of Kernel Density Estimators is taken up again as ingredient in the procedure for selecting appropriate parameters for the KDE-MM-Scheme in Sections \ref{sec: 3.2}-\ref{sec: 3.3}.   

\section{The KDE-MM-Scheme}\label{sec: 3}
This section refocuses on the Kernel-Density-Estimator Minimizing Movement Scheme (KDE-MM-Scheme) outlined in Section \ref{sec: KDE-MM-Scheme intro} as approximation scheme for \eqref{eq: diffusion equation intro}, $q\in(1, +\infty)$. Let $p\in(1,+\infty)$ be the conjugate exponent of $q$. We fix a kernel function $\K$ according to Definition \ref{def: kernel function} assuming that $\K$ has finite moment of order $p$, i.e.  
\begin{equation}\label{eq: kernel finite moment}
\mathcal{M}_{\K, p} := \int_{\mathbb{R}^d}{|x|^p\K(x)\mathrm{d}x} < +\infty; 
\end{equation} 
$\K_h, \ h>0,$ denotes the associated family of functions \eqref{eq: K_h}.

\subsection{Definition and Consistency of KDE-MM-Scheme}\label{sec: 3.1}

The uniform convergence rates for Kernel Density Estimators and their derivatives according to Proposition \ref{prop: convergence rates} and Remark \ref{rem: fourth order} enable us to construct ``$\Gamma$-KDE-Approximations'' of energy functionals associated with second and fourth order diffusion equations \eqref{eq: diffusion equation intro}. We suppose \ref{ass: phi1} for the sake of a clear presentation with a straightforward notation but the condition can be omitted, see Remark \ref{rem: further relaxations}.

\begin{definition}[$\Gamma$-KDE-Approximation]\label{def: Gamma KDE Approximation}
We say that a sequence of energy functionals $\phi_n: \mathcal{P}_p(\mathbb{R}^d)\to(-\infty, +\infty], \ n\in\mathbb{N}$, is a $\Gamma$-KDE-Approximation of $\phi: \mathcal{P}_p(\mathbb{R}^d)\to(-\infty,+\infty]$ associated with the kernel function $\K$ and the correlation $n \mapsto h(n)$ between sample sizes and bandwidths if $h(n)>0,  \ h(n)\downarrow0$ as $n\uparrow+\infty$, 
\begin{enumerate}[label=($\mathcal{C}$\arabic*)]
\item \label{itm: C1} the effective domains $\{\phi_n<+\infty\}$ are concentrated in the KDE ranges corresponding to $\K, \ n, \ h(n)$, i.e.
\begin{equation*}
\phi_n(\mu) < +\infty \quad\Rightarrow\quad \exists y_1, .., y_n\in\mathbb{R}^d: \ \mu =  \Big(\frac{1}{n}\sum_{i=1}^{n}{\K_{h(n)}(\cdot-y_i)}\Big)\mathcal{L}^d, 
\end{equation*}
\item \label{itm: C2} $\phi_n\stackrel{\Gamma}{\to}\phi$ in $(\mathcal{P}_p(\mathbb{R}^d), \mathcal{W}_p)$, and 
\item \label{itm: C3} whenever $\mu=\rho\mathcal{L}^d\in\{\phi<+\infty\}$ and $\hat{\rho}_{n,h(n)}, \ n\in\mathbb{N},$ is a sequence of Kernel Density Estimators \eqref{eq: kernel density estimator} for $\rho$, the corresponding sequence of measure-valued random variables $\hat{\mu}_{n, h(n)}:=\hat{\rho}_{n,h(n)}\mathcal{L}^d, \ n\in\mathbb{N},$ almost surely constitutes a recovery sequence \eqref{eq: recovery sequence} for $\mu$, i.e. 
\begin{equation}\label{eq: KDE recovery sequence}
\lim_{n\to+\infty}\mathcal{W}_p(\hat{\mu}_{n, h(n)}, \mu) \ = \ 0 \quad \text{and} \quad \lim_{n\to+\infty}\phi_n(\hat{\mu}_{n, h(n)}) \ = \ \phi(\mu)
\end{equation}
with probability $1$. 
\end{enumerate}
\end{definition}

Variations (partial and weak $\Gamma$-KDE-Approximation) are introduced in Remark \ref{rem: Gamma KDE Approximation variation}. A $\Gamma$-KDE-Approximation of the energy functional from our illustrative Example \ref{ex: chain rule} is presented in Section \ref{sec: 3.2}. 

Our approach to second and fourth order diffusion equations \eqref{eq: diffusion equation intro} governed by an energy functional $\phi$ is to carry out the relaxed Minimizing Movement scheme \eqref{eq: relaxed MM 1}, \eqref{eq: relaxed MM 2} along a $\Gamma$-KDE-Approximation $\phi_n, \ n\in\mathbb{N},$ of $\phi$; $(\phi_n)_n$ may also be a partial, a weak or a partial weak $\Gamma$-KDE-Approximation of $\phi$ in accordance with Remark \ref{rem: Gamma KDE Approximation variation}. 

The simple structure of $\{\phi_n < +\infty\}$ as per \ref{itm: C1} brings an advantage for the implementation of the scheme in itself and moreover, it allows a significant simplification of the distance term in \eqref{eq: relaxed MM 2}. 

\begin{definition}[KDE-MM-Scheme]\label{def: KDE-MM-Scheme}
Let $\phi_n, \ n\in\mathbb{N},$ be a $\Gamma$-KDE-Approximation of $\phi: \mathcal{P}_p(\mathbb{R}^d)\to(-\infty, +\infty]$ associated with $\K$ and $n\mapsto h(n)$ according to Definition \ref{def: Gamma KDE Approximation}. We assume \ref{ass: phin1}. Every time step size $\tau >0$ is assigned a sample size $n(\tau)\in\mathbb{N}$ and the corresponding bandwidth $h(\tau):=h(n(\tau))$. 
 
For $\tau \in \big(0, \big(\frac{1}{pB}\big)^{1/(p-1)}\big)$ and a given initial datum $Y_\tau^0:=\big(y_{1,\tau}^0, ..., y_{n(\tau), \tau}^0\big)$, $y_{i,\tau}^0\in\mathbb{R}^d$, we find a sequence 
\begin{equation}\label{eq: MM sequence Y}
Y_\tau^m:=\big(y_{1,\tau}^m, ..., y_{n(\tau), \tau}^m\big),  \quad y_{i,\tau}^m\in\mathbb{R}^d, \quad \quad m\in\mathbb{N}_0
\end{equation}
by the scheme
\begin{equation}\label{eq: KDE-MM-Scheme}
\Psi(\tau, Y_\tau^{m-1}, Y_\tau^m) \ \leq \ \inf_{Z=(z_1, ..., z_{n(\tau)})}{\Psi(\tau, Y_\tau^{m-1}, Z)} \ + \ \gamma_\tau^{(m)} \quad \quad (m\geq1)
\end{equation}
associated with 
\begin{equation}\label{eq: KDE-MM-Scheme Psi}
\Psi(\tau, Y , Z) \ := \ \phi_{n(\tau)}\Big(\frac{1}{n(\tau)}\sum_{i=1}^{n(\tau)}{\K_{h(\tau)}(\cdot-z_i)\mathcal{L}^d}\Big) \ + \ \frac{1}{p\tau^{p-1}}\sum_{i=1}^{n(\tau)}{\frac{|z_i-y_i|^p}{n(\tau)}}
\end{equation}
for $Y:=\big(y_1,...,y_{n(\tau)}\big), \ Z:=\big(z_1, ..., z_{n(\tau)}\big),  \ y_i, \ z_i\in\mathbb{R}^d,$ and error terms $\gamma_\tau^{(m)} > 0, \ m\in\mathbb{N}$. 

We assign probability measures $\mu_\tau^m := u_\tau^m \mathcal{L}^d \in \mathcal{P}_p(\mathbb{R}^d), \ m\in\mathbb{N}_0$, 
\begin{equation}\label{eq: MM sequence}
u_{\tau}^m \ := \ \frac{1}{n(\tau)}\sum_{i=1}^{n(\tau)}{\K_{h(\tau)}(\cdot-y_{i,\tau}^m)}, 
\end{equation}
to the sequence $(Y_\tau^m)_{m\in\mathbb{N}_0}$; 
the corresponding piecewise constant interpolations $\mu_\tau: [0, +\infty)\to\mathcal{P}_p(\mathbb{R}^d),$
\begin{equation}\label{eq: discrete solution}
\mu_\tau(0)=\mu_\tau^0, \quad\quad \mu_\tau(t) \equiv \mu_\tau^m \quad \text{ if } t\in((m-1)\tau, m\tau], \ m\in\mathbb{N},
\end{equation}
are called discrete solutions. 
\end{definition}

The existence of solutions to the relaxed minimum problems \eqref{eq: KDE-MM-Scheme} for $0 < \tau < \big(\frac{1}{pB}\big)^{1/(p-1)}$ follows from Young's inequality, condition \ref{ass: phin1} and the fact that 
\begin{equation}\label{eq: 0}
\mathcal{W}_p\Big(\frac{1}{n}\sum_{i=1}^{n}{\K_{h(n)}(\cdot-y_i)\mathcal{L}^d}, \frac{1}{n}\sum_{i=1}^{n}{\K_{h(n)}(\cdot-z_i)\mathcal{L}^d}\Big)^p \ \leq \ \frac{1}{n}\sum_{i=1}^{n}{|y_i-z_i|^p} 
\end{equation}
for every $y_i, z_i \in\mathbb{R}^d$, which can be easily seen by testing the minimum problem in the definition of $\mathcal{W}_p$ on the measure $\frac{1}{n}\sum_{i=1}^{n}{(\mathrm{id}\times T_i)_{\#}(\K_{h(n)}(\cdot-y_i)\mathcal{L}^d)}$, $T_i(x):=z_i+x-y_i$. 
We are interested in the limiting behaviour of discrete solutions \eqref{eq: discrete solution} as the time step sizes $\tau\downarrow0$; we tacitly suppose in all our considerations that $\tau < \big(\frac{1}{pB}\big)^{1/(p-1)}$ so that $\inf_{Z=(z_1, ..., z_{n(\tau)})}{\Psi(\tau, Y, Z)} > -\infty$ for every $Y=(y_1, ..., y_{n(\tau)})$. 

The probabilistic aspect of the KDE-MM-Scheme is only implicitly visible in Definition \ref{def: KDE-MM-Scheme} through condition \ref{itm: C3} and the need for a concrete and feasible recovery sequence corresponding to some initial probability distribution (whose exact form is unknown in most cases).  The KDE-MM-Scheme yields (weak) solutions to \eqref{eq: diffusion equation intro}, see Theorem \ref{thm: KDE-MM-Scheme consistency} below; the discrete solutions  \eqref{eq: MM sequence}, \eqref{eq: discrete solution} of the KDE-MM-Scheme are discrete-time steepest descents in $(\mathcal{P}_p(\mathbb{R}^d), \mathcal{W}_p)$ driven by the motion of a finite number of particles / data points $y_{i, \tau}^m$.

\begin{theorem}[Strong consistency of KDE-MM-Scheme]\label{thm: KDE-MM-Scheme consistency}

Let $\phi_n, \ n\in\mathbb{N},$ be a $\Gamma$-KDE-Approximation of an energy functional $\phi: \mathcal{P}_p(\mathbb{R}^d)\to(-\infty, +\infty]$ according to Definition \ref{def: Gamma KDE Approximation}. We suppose that Assumption \ref{ass: 1} is satisfied.

Let $(\tau_k)_{k\in\mathbb{N}}$ be a sequence of time step sizes $\tau_k\downarrow0$. Every time step size $\tau_k$ is assigned a parameter (sample size) $n(\tau_k)$, the corresponding bandwidth $h(\tau_k):=h(n(\tau_k))$ and error terms $\gamma_{\tau_k}^{(m)}>0, \ m\in\mathbb{N},$ such that $n(\tau_k)\uparrow+\infty$, 
\begin{equation}\label{eq: KDE-MM-Scheme parameters}
\text{Assumption \ref{ass: 2} holds true,} \quad \quad \quad \lim_{k\to+\infty}\frac{h(\tau_k)}{\tau_k^p} \ = \ 0
\end{equation}
and the error terms satisfy \eqref{eq: error term non-uniform}. Assuming the associated KDE-MM-Scheme is performed with initial data $Y_{\tau_k}^0:=\big(X_1, ..., X_{n(\tau_k)}\big)$ with $X_1, ..., X_{n(\tau_k)}$ being an i.i.d. sample from some initial measure $\mu^0\in\{\phi<+\infty\}$, the following holds good for the corresponding discrete solutions $(\mu_{\tau_{k}})_{k\in\mathbb{N}}$ \eqref{eq: discrete solution} with probability $1$: 
\begin{enumerate}[label=(\roman*)]
\item There exist a subsequence of time step sizes $(\tau_{k_l})_{l\in\mathbb{N}}, \ \tau_{k_l}\downarrow0$ and a curve $\mu: [0, +\infty)\to\mathcal{P}_p(\mathbb{R}^d), \ \mu(t)=u(t, \cdot)\mathcal{L}^d,$ such that $\mu(0)=\mu^0,$
\begin{equation*}
\lim_{l\to+\infty}\mathcal{W}_p(\mu_{\tau_{k_l}}(t), \mu(t)) \ = \ 0 \quad \text{ for all }t\geq 0. 
\end{equation*}
The limit curve $\mu$ is locally absolutely continuous in $(\mathcal{P}_p(\mathbb{R}^d), \mathcal{W}_p)$ and satisfies the energy inequality \eqref{eq: energy inequality}. \label{itm: limit curve KDE-MM-Scheme}
\item If, in addition, $\phi\circ\mu$ belongs to $\mathrm{C}([0, +\infty))\cap\mathrm{W}_{1,1}^{\mathrm{loc}}((0, +\infty))$ satisfying $\partial_l\phi(\mu(t))=\{D_l\phi(\mu(t))\}$ and the chain rule \eqref{eq: chain rule}
for $\mathcal{L}^1$-a.e. $t>0$, then the limit curve $\mu$ from \ref{itm: limit curve KDE-MM-Scheme} and its tangent vector field solve the differential equation \eqref{eq: differential inclusion} (which is a weak reformulation of \eqref{eq: diffusion equation intro}) and \eqref{eq: subdifferential slope equality} for $\mathcal{L}^1$-a.e. $t>0$, the energy dissipation equality \eqref{eq: ede}
for all $0\leq s\leq t < +\infty$ and 
\begin{equation*}
\lim_{l\to+\infty}\phi_{n(\tau_{k_l})}(\mu_{\tau_{k_l}}(t)) \ = \ \phi(\mu(t)) \quad\quad \text{ for all } t\geq0. \label{itm: differential equation KDE-MM-Scheme}
\end{equation*}
\end{enumerate}

There exists a sequence $(n_\tau)_{\tau >0}$ with $n_\tau\in\mathbb{N}, \ n_\tau \uparrow +\infty$ as $\tau\downarrow0$, such that \eqref{eq: KDE-MM-Scheme parameters} holds good for all choices $(n(\tau))_{\tau>0}$ with $n(\tau)\in\mathbb{N}, \ n(\tau)\geq n_\tau$ and all sequences $(\tau_k)_{k\in\mathbb{N}}$ of time step sizes $\tau_k\downarrow0$. 

\end{theorem}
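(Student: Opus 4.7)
The plan is to reduce the KDE-MM-Scheme to the relaxed Minimizing Movement scheme \eqref{eq: relaxed MM 1}, \eqref{eq: relaxed MM 2} analysed in Section \ref{sec: 2.2}, and then invoke Theorems \ref{thm: MM approach I} and \ref{thm: MM approach II}. The three clauses of Definition \ref{def: Gamma KDE Approximation} are tailored exactly for this reduction: \ref{itm: C1} identifies $\{\phi_{n(\tau)}<+\infty\}$ with the KDE measures $\mu(Z):=\frac{1}{n(\tau)}\sum_i \K_{h(\tau)}(\cdot-z_i)\mathcal{L}^d$; \ref{itm: C2} provides the $\Gamma$-convergence required by Theorem \ref{thm: MM approach I}; and \ref{itm: C3} ensures that the random initial datum $\mu_{\tau_k}^0=\frac{1}{n(\tau_k)}\sum_i \K_{h(\tau_k)}(\cdot-X_i)\mathcal{L}^d$ is, with probability one, a recovery sequence for $\mu^0$.

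The core technical step is a quantitative comparison between $\Psi$ and $\Phi$. Inequality \eqref{eq: 0} immediately gives $\Psi(\tau,Y,Z)\geq \Phi(\tau,\mu(Y),\mu(Z))$, hence $\inf_Z \Psi \geq \mathcal{Y}^{(p)}_\tau\phi_{n(\tau)}(\mu(Y))$. Conversely, given a near-optimal competitor $\nu^\ast=\mu(Z^\ast)$ for the Moreau--Yosida infimum, I would permute the entries of $Z^\ast$ into a tuple $\tilde Z$ realising the optimal matching between the empirical measures $\frac{1}{n(\tau)}\sum\delta_{y_i}$ and $\frac{1}{n(\tau)}\sum\delta_{z_i^\ast}$; the triangle inequality combined with \eqref{eq: Wasserstein convolution estimate} applied separately to $\mu(Y)$ and $\mu(Z^\ast)$ yields
\begin{equation*}
\Big(\tfrac{1}{n(\tau)}\sum_i|\tilde z_i-y_i|^p\Big)^{1/p} \leq \mathcal{W}_p(\mu(Y),\nu^\ast) + 2h(\tau)\mathcal{M}_{\K,p}^{1/p}.
\end{equation*}
The elementary bound $(a+b)^p\leq a^p+pb(a+b)^{p-1}$, together with the a priori estimate $\mathcal{W}_p(\mu(Y),\nu^\ast)\lesssim \tau^{(p-1)/p}$ --- extracted from $\mathcal{Y}^{(p)}_\tau\phi_{n(\tau)}(\mu(Y))\leq \phi_{n(\tau)}(\mu(Y))$ and \ref{ass: phin1} --- gives the effective inequality $\inf_Z\Psi(\tau,Y,Z)\leq \mathcal{Y}^{(p)}_\tau\phi_{n(\tau)}(\mu(Y))+\mathrm{err}(\tau,Y)$ with $\mathrm{err}(\tau,Y)\lesssim h(\tau)\tau^{-(p-1)/p}$, with implicit constant depending only on the uniformly bounded quantities $\phi_{n(\tau)}(\mu(Y))$ and $\mathcal{W}_p(\mu(Y),\mu_\star)$.

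Combining this $\Psi$-to-$\mathcal{Y}^{(p)}\phi_{n(\tau)}$ bound with $\Psi\geq\Phi$ shows that the discrete solutions $\mu_{\tau_k}$ of the KDE-MM-Scheme are also discrete solutions of the relaxed scheme \eqref{eq: relaxed MM 1}, \eqref{eq: relaxed MM 2} for the functional $\Phi$ of \eqref{eq: relaxed MM 2}, with adjusted error terms $\tilde\gamma_{\tau_k}^{(m)}:=\gamma_{\tau_k}^{(m)}+\mathrm{err}_m$. Uniform boundedness of $\phi_{n(\tau_k)}(\mu_{\tau_k}^m)$ and $\mathcal{W}_p(\mu_{\tau_k}^m,\mu_\star)$ --- obtained from the trivial discrete energy bound that follows by testing the scheme at $Z=Y_{\tau_k}^{m-1}$, together with \ref{ass: phin1}, \ref{ass: phin2} and the fact that $\phi_{n(\tau_k)}(\mu_{\tau_k}^0)\to\phi(\mu^0)$ on the probability-one event from \ref{itm: C3} --- controls the per-step error uniformly, and the total accumulated error on any $[0,T]$ is bounded by $T\cdot(h(\tau_k)/\tau_k^p)\cdot\tau_k^{(p-1)^2/p}$, which tends to zero precisely under the hypothesis $h(\tau_k)/\tau_k^p\to 0$; hence the $\tilde\gamma_{\tau_k}^{(m)}$ satisfy \eqref{eq: error term non-uniform}. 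Theorem \ref{thm: MM approach I} (in the non-uniform-error form of Remark \ref{rem: error term}) then delivers \ref{itm: limit curve KDE-MM-Scheme} and, under the chain-rule hypothesis, \ref{itm: differential equation KDE-MM-Scheme}. The existence claim for $(n_\tau)_{\tau>0}$ follows by taking the pointwise maximum of the sequence furnished by Theorem \ref{thm: MM approach II} for Assumption \ref{ass: 2} and any $n_\tau$ large enough that $h(n_\tau)\leq\tau^{p+1}$, which is possible since $h(n)\downarrow 0$.

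The main obstacle is precisely the quantitative $\Psi$-vs-$\Phi$ comparison above: the permutation/triangle/convolution estimate must be combined carefully enough that the per-step error is $o(\tau)$ when summed over the $O(1/\tau)$ steps in a finite horizon, which is exactly what fixes the scaling $h(\tau)=o(\tau^p)$ appearing in the hypotheses. All remaining ingredients --- compactness, passage to the limit, the chain rule, and the selection of parameters --- are direct applications of the stability theory developed in Section \ref{sec: 2.2}.
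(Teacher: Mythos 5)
Your proposal is correct and follows essentially the same route as the paper's proof: reduce the KDE-MM-Scheme to the relaxed Minimizing Movement scheme \eqref{eq: relaxed MM 1}, \eqref{eq: relaxed MM 2} by comparing the particle distance with $\mathcal{W}_p$ via the optimal matching of the empirical measures and the convolution estimate \eqref{eq: Wasserstein convolution estimate}, absorb the resulting per-step discrepancy of order $h(\tau)/\tau^{p-1}$ (times a controlled factor) into the error terms \eqref{eq: error term non-uniform} using $h(\tau_k)/\tau_k^p\to 0$, and conclude by Theorems \ref{thm: MM approach I} and \ref{thm: MM approach II}. The only (harmless) deviation is that you bound the factor $\big(\mathcal{W}_p(\mu(Y),\nu^\ast)+2h\mathcal{M}_{\K,p}^{1/p}\big)^{p-1}$ by the Moreau--Yosida a priori estimate $\mathcal{W}_p(\mu(Y),\nu^\ast)\lesssim\tau^{(p-1)/p}$, whereas the paper bounds it by uniform moment bounds $\tilde R_{k,m}^{p-1}$ on the particle configurations; both are valid under the stated hypothesis.
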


\begin{proof}
According to Definition \ref{def: Gamma KDE Approximation}, the initial data $\mu_{\tau_k}(0)$ associated with $Y_{\tau_k}^0$ almost surely form a recovery sequence, i.e. 
\begin{equation*}
\lim_{k\to+\infty}\mathcal{W}_p(\mu_{\tau_k}(0), \mu^0) = 0, \quad \lim_{k\to+\infty}\phi_{n(\tau_k)}(\mu_{\tau_k}(0)) = \phi(\mu^0)
\end{equation*}
with probability $1$. We apply Theorem \ref{thm: MM approach I} taking account of the differences in the distance terms of the relaxed Minimizing Movement scheme \eqref{eq: relaxed MM 1}, \eqref{eq: relaxed MM 2} and the KDE-MM-Scheme. They can be estimated in the following way. 

Let $Y_{\tau_k}^m:=\big(y_{1,\tau_k}^m, ..., y_{n(\tau_k), \tau_k}^m\big), \  y_{i, \tau_k}^m\in\mathbb{R}^d,$ be a solution \eqref{eq: MM sequence Y} of one step \eqref{eq: KDE-MM-Scheme} of the KDE-MM-Scheme and $\mu_{\tau_k}^m=u_{\tau_k}^m\mathcal{L}^d$ be the associated measure \eqref{eq: MM sequence}; $u_{\tau_k}^m$  equals the convolution between the scaled kernel function $\K_{h(\tau_k)}$ and the measure $\frac{1}{n(\tau_k)}\sum_{i=1}^{n(\tau_k)}{\delta_{y_{i, \tau_k}^m}}$ so that
\begin{equation}\label{eq: 1}
\mathcal{W}_p(\mu_{\tau_k}^m, \mu_{\tau_k}^{m-1})^p \ \leq \ \frac{1}{n(\tau_k)}\sum_{i=1}^{n(\tau_k)}{|y_{i, \tau_k}^m - y_{i, \tau_k}^{m-1}|^p} 
\end{equation}
according to \eqref{eq: 0}.
Estimate \eqref{eq: 1} is the first step towards the inequality
\begin{equation}\label{eq: final inequality}
\Phi(\tau_k, \mu_{\tau_k}^{m-1}, \mu_{\tau_k}^m) \ \leq \ \inf_{\nu\in\mathcal{P}_p(\mathbb{R}^d)}{\Phi(\tau, \mu_{\tau_k}^{m-1}, \nu)} \ + \ \bar{\gamma}_{\tau_k}^{(m)}
\end{equation}
for $\Phi$ defined as in \eqref{eq: relaxed MM 2} and a suitable error term $\bar{\gamma}_{\tau_k}^{(m)}>0$. 

Moreover, we apply the change of variables formula and Jensen's inequality to $\mathcal{W}_p(\mu_{\tau_k}(0), \delta_0)=\Big(\int_{\mathbb{R}^d}{|x|^p\mathrm{d}\mu_{\tau_k}(0)}\Big)^{1/p}$ so that we have
\begin{equation*}
\mathcal{W}_p(\mu_{\tau_k}(0), \delta_0) \ \geq \ \int_{\mathbb{R}^d}{\Big(\frac{1}{n(\tau_k)}\sum_{i=1}^{n(\tau_k)}{|h(\tau_k)z+X_i|^p}\Big)^{1/p}\K(z)\mathrm{d}z}
\end{equation*}
and using Minkowski's inequality for sequences, we obtain
\begin{equation}\label{eq: estimate sample}
\mathcal{W}_p(\mu_{\tau_k}(0), \delta_0) \ \geq \ \Big(\frac{1}{n(\tau_k)}\sum_{i=1}^{n(\tau_k)}{|X_i|^p}\Big)^{1/p} \ - \ h(\tau_k)\cdot C, 
\end{equation}
where $C:=\int_{\mathbb{R}^d}{|z|\K(z)\mathrm{d}z} \leq \mathcal{M}_{\K, p}^{1/p} < +\infty$ (cf. \eqref{eq: kernel finite moment}). 
Hence, there almost surely exist constants $R_{k,m}>0$ for all $k, m \in\mathbb{N}$ such that 
\begin{equation}\label{eq: constant R 1}
\Big(\frac{1}{n(\tau_k)}\sum_{i=1}^{n(\tau_k)}{ |y_{i, \tau_k}^m|^p}\Big)^{1/p} \ \leq \ R_{k,m} 
\end{equation}
and 
\begin{equation}\label{eq: constant R 2}
\sup \{R_{k,m}: \ k,m \text{ s.t. } m\tau_k\leq T\} \ < \ +\infty \quad\quad \text{for every} \quad\quad T>0; 
\end{equation}
in actual fact, it is not difficult to deduce \eqref{eq: constant R 1}, \eqref{eq: constant R 2} from the first part of the proof of Thm. 3.4 in \cite{fleissner2016gamma}, \eqref{eq: error term non-uniform}, \eqref{eq: estimate sample} and the facts that  
\begin{equation}\label{eq: initial estimate a.s.}
\sup_k\{\phi_{n(\tau_k)}(\mu_{\tau_k}(0)), \mathcal{W}_p(\mu_{\tau_k}(0), \delta_0)\} < +\infty \quad\quad \text{a.s.}
\end{equation}
and 
\begin{equation*}
\phi_{n(\tau_k)}(\mu_{\tau_k}^m) \geq  -A-B\cdot 2^{p-1}[h(\tau_k)\mathcal{M}_{\K, p}^{1/p}+\mathcal{W}_p(\delta_0, \mu_\star)]^p - B\cdot 2^{p-1}\cdot\Big(\frac{1}{n(\tau_k)}\sum_{i=1}^{n(\tau_k)}{|y_{i, \tau_k}^m|^p}\Big)
\end{equation*}
by \ref{ass: phin1}, \eqref{eq: 0} and the inequalities $(a+b)^p\leq 2^{p-1}(a^p+b^p)$ for $a,b \geq0$ and $\mathcal{W}_p(\K_{h(\tau_k)}(\cdot)\mathcal{L}^d, \mu_\star) \leq h(\tau_k)\mathcal{M}_{\K,p}^{1/p}+\mathcal{W}_p(\delta_0, \mu_\star)$. 

Further, we have 
\begin{equation*}
\mathcal{W}_p(\mu_{\tau_k}^{m-1}, \delta_0) \ \leq \ \Big(\frac{1}{n(\tau_k)}\sum_{i=1}^{n(\tau_k)}{|y_{i, \tau_k}^{m-1}|^p}\Big)^{1/p} + \underbrace{\mathcal{W}_p(\K_{h(\tau_k)}(\cdot)\mathcal{L}^d, \delta_0)}_{\leq h(\tau_k)\cdot \mathcal{M}_{\K,p}^{1/p}}
\end{equation*}
by \eqref{eq: 0}, \eqref{eq: kernel finite moment} and 
\begin{equation*}
\phi_{n(\tau_k)}(\mu_{\tau_k}^{m-1}) \ \leq \ \phi_{n(\tau_k)}(\mu_{\tau_k}(0)) + \sum_{j=1}^{m-1}{\gamma_{\tau_k}^{(j)}}
\end{equation*}
by \eqref{eq: KDE-MM-Scheme}. Consequently, the first part of the proof of Thm. 3.4 in \cite{fleissner2016gamma}, \eqref{eq: constant R 1}, \eqref{eq: constant R 2}, \eqref{eq: initial estimate a.s.}, \eqref{eq: error term non-uniform}, \ref{ass: phin1} and an estimate analogous to \eqref{eq: estimate sample} show that there almost surely exist constants $\bar{R}_{k,m}>0$ for all $k, m\in\mathbb{N}$ with $\tau_k$ small enough such that \eqref{eq: constant R 2} holds true for $\bar{R}_{k,m}$ and 
\begin{equation}\label{eq: constant bar R}
\Big(\frac{1}{n(\tau_k)}\sum_{i=1}^{n(\tau_k)}{ |y_{i}|^p}\Big)^{1/p} \ \leq \ \bar{R}_{k,m}
\end{equation}
whenever  $\nu := \Big( \frac{1}{n(\tau_k)}\sum_{i=1}^{n(\tau_k)}{\K_{h(\tau_k)}(\cdot - y_i)}\Big) \mathcal{L}^d$ satisfies
\begin{equation*}
\phi_{n(\tau_k)}(\nu) + \frac{1}{p\tau_k^{p-1}}\mathcal{W}_p(\nu, \mu_{\tau_k}^{m-1})^p \ \leq \ \inf_{\bar{\nu}\in\mathcal{P}_p(\mathbb{R}^d)}\Phi(\tau_k, \mu_{\tau_k}^{m-1}, \bar{\nu}) \ + \ 1, 
\end{equation*}
for $\Phi$ defined as in \eqref{eq: relaxed MM 2}. By Lem. 7.1.10 in \cite{AGS08}, 
\begin{equation}\label{eq: 2}
\mathcal{W}_p\Big( \nu, \ \frac{1}{n(\tau_k)}\sum_{i=1}^{n(\tau_k)}{\delta_{y_{i}}}\Big)^p \ \leq \ h(\tau_k)^p \cdot \mathcal{M}_{\K, p}. 
\end{equation}
As the measure $\nu$ is independent of the numbering order of $y_1, ..., y_{n(\tau_k)}$, we may assume w.l.o.g. that 
\begin{equation*}
\mathcal{W}_p\Big(\frac{1}{n(\tau_k)}\sum_{i=1}^{n(\tau_k)}{\delta_{y_{i}}} \ , \ \frac{1}{n(\tau_k)}\sum_{i=1}^{n(\tau_k)}{\delta_{y_{i, \tau_k}^{m-1}}}\Big)^p \ = \ \frac{1}{n(\tau_k)}\sum_{i=1}^{n(\tau_k)}{|y_{i}-y_{i,\tau_k}^{m-1}|^p}
\end{equation*}
(cf. pp. 5-6 in \cite{Villani03}). We obtain
\begin{equation}\label{eq: 5}
 \frac{1}{n(\tau_k)}\sum_{i=1}^{n(\tau_k)}{|y_{i}-y_{i,\tau_k}^{m-1}|^p} \ \leq \ \mathcal{W}_p(\nu, \mu_{\tau_k}^{m-1})^p \ + 2p\cdot h(\tau_k)\cdot\mathcal{M}_{\K,p}^{1/p}\cdot \tilde{R}_{k, m}^{p-1}, 
\end{equation}
$\tilde{R}_{k,m}:=\bar{R}_{k,m}+R_{k, m-1}$, 
by applying the estimate \eqref{eq: 2} to both $\nu$ and $\mu_{\tau_k}^{m-1}$, \eqref{eq: constant R 1}, \eqref{eq: constant bar R}, the triangle inequality and Minkowski's inequality for sequences. 

All in all, we infer from \eqref{eq: 1}, \eqref{eq: KDE-MM-Scheme}, \eqref{eq: 5}, the fact that the constants $\tilde{R}_{k,m}$ satisfy \eqref{eq: constant R 2} and from \eqref{eq: KDE-MM-Scheme parameters} that with probability $1$ the measures $\mu_{\tau_k}^m=u_{\tau_k}^m\mathcal{L}^d, \ m\in\mathbb{N},$ defined in \eqref{eq: MM sequence} solve the successive relaxed minimum problems \eqref{eq: final inequality}
for all small time step sizes $\tau_k$ and the corresponding error terms 
\begin{equation*}
\bar{\gamma}_{\tau_k}^{(m)} \ := \ \gamma_ {\tau_k}^{(m)}  + 2\mathcal{M}_{\K, p}^{1/p}\cdot\tilde{R}_{k, m}^{p-1}\cdot \frac{h(\tau_k)}{\tau_k^{p-1}}
\end{equation*}
satisfy condition \eqref{eq: error term non-uniform}. An application of Theorem \ref{thm: MM approach I} and Remark \ref{rem: error term} completes the proof of \ref{itm: limit curve KDE-MM-Scheme} and \ref{itm: differential equation KDE-MM-Scheme}. 

Since $h(n) \downarrow0$ as $n\uparrow+\infty$ by Definition \ref{def: Gamma KDE Approximation}, it is possible to select $n_\tau\in\mathbb{N} \ (\tau >0)$ according to Theorem \ref{thm: MM approach II} so that both Assumption \ref{ass: 2} and condition
\begin{equation*}
\lim_{\tau\downarrow0} \frac{h(n(\tau))}{\tau^p} \ = \ 0
\end{equation*}
hold true whenever $n(\tau)\in\mathbb{N}, \ n(\tau)\geq n_\tau$. The proof of Theorem \ref{thm: KDE-MM-Scheme consistency} is complete. 
\end{proof}

Section \ref{sec: 3.3} deals with the validation of Assumption \ref{ass: 2} and the precise selection of appropriate parameters $n=n(\tau), \ h= h(\tau)$. 

The reader is reminded that in the case of a uniform error distribution (i.e. $\gamma_\tau^{(m)} = \gamma_\tau$ for all $m\in\mathbb{N}$), condition \eqref{eq: error term non-uniform} means that the order of the error term $\gamma_\tau$ is $o(\tau)$, cf. \eqref{eq: error term} and Remark \ref{rem: error term}. 

We may relax conditions \ref{itm: C2} and \ref{itm: C3} in Definition \ref{def: Gamma KDE Approximation} and Assumption \ref{ass: 1}, perform the KDE-MM-Scheme along a partial or weak $\Gamma$-KDE-Approximation of the energy functional $\phi$ and still obtain the convergence / strong consistency statements from Theorem \ref{thm: KDE-MM-Scheme consistency}: 

\begin{remark}[Partial and weak $\Gamma$-KDE-Approximation]\label{rem: Gamma KDE Approximation variation}

We suppose that \ref{ass: phi3} is satisfied and that for small $\tau>0$ and all $\mu\in\mathcal{P}_p(\mathbb{R}^d)$ there exists a solution to the minimum problem
\begin{equation*}
\min_{\bar{\mu}\in\mathcal{P}_p(\mathbb{R}^d)}\Big\{\phi(\bar{\mu}) + \frac{1}{p\tau^{p-1}}\mathcal{W}_p(\bar{\mu}, \mu)^p\Big\}.
\end{equation*} 

We say that $(\phi_n)_{n\in\mathbb{N}}$ is a \textit{partial $\Gamma$-KDE-Approximation} of $\phi$ associated with $\K$ and $n\mapsto h(n)$ if $h(n)\downarrow0$ as $n\uparrow+\infty$, \ref{itm: C1} and the $\Gamma$-liminf-inequality \eqref{eq: Gamma-liminf} are satisfied and for a subset $\mathcal{I}\subset\{\phi < +\infty\}$ and all $\mu\in\mathcal{I}$, Kernel Density Estimation almost surely yields a recovery sequence \eqref{eq: KDE recovery sequence}. Statements \ref{itm: limit curve KDE-MM-Scheme} and \ref{itm: differential equation KDE-MM-Scheme} from Theorem \ref{thm: KDE-MM-Scheme consistency} still hold true if the KDE-MM-Scheme is performed along a partial $\Gamma$-KDE-Approximation for some initial measure $\mu^0\in\mathcal{I}$, cf. Remark \ref{rem: relaxation phin}.  Assuming \ref{itm: C2}, also Theorem \ref{thm: MM approach II} can be applied. 

We say that $(\phi_n)_{n\in\mathbb{N}}$ is a \textit{weak $\Gamma$-KDE-Approximation} of $\phi$ associated with $\K$ and $n\mapsto h(n)$ if $h(n)\downarrow0$ as $n\uparrow+\infty$, \ref{itm: C1} is satisfied, 
\begin{equation}\label{eq: weak Gamma-liminf}
\phi(\nu) \ \leq \ \liminf_{n\to+\infty}\phi_n(\nu_n) \quad\text{whenever}\quad v_n\rightharpoonup\nu, \ \sup_n\mathcal{W}_p(\nu_n, \nu) < +\infty
\end{equation}
and in \eqref{eq: KDE recovery sequence}, $\mathcal{W}_p(\hat{\mu}_{n,h(n)}, \mu) \to 0$ is replaced with 
\begin{equation*}
\sup_n\mathcal{W}_p(\hat{\mu}_{n,h(n)}, \mu) < +\infty, \ \hat{\mu}_{n,h(n)}\rightharpoonup\mu.
\end{equation*}
Theorem \ref{thm: KDE-MM-Scheme consistency} can be generalized to a weak $\Gamma$-KDE-Approximation according to Remark \ref{rem: relaxation phin}; condition \ref{ass: phin2} from Assumption \ref{ass: 1} is omitted in this case. The same is true of a \textit{partial weak $\Gamma$-KDE-Approximation}, whose definition is obvious.  
\end{remark}

A second order example of a partial weak $\Gamma$-KDE-Approximation is given in Remark \ref{rem: example weak Gamma KDE}. 

\begin{remark}[Further relaxation of Assumption \ref{ass: 1}]\label{rem: further relaxations}
The KDE approximation of general probability distributions is well examined and condition \ref{ass: phi1} from Assumption \ref{ass: 1} can be omitted, cf. Remarks \ref{rem: fourth order}, \ref{rem: relaxation} and \ref{rem: relaxation phin}. 
\end{remark}

Finally, 
we note that in view of Theorem \ref{thm: KDE-MM-Scheme consistency} \ref{itm: limit curve KDE-MM-Scheme} and Remark \ref{rem: absence of chain rule}, the KDE-MM-Scheme may be a suitable approximation scheme for \eqref{eq: diffusion equation intro} even if the chain rule \eqref{eq: chain rule} cannot be validated as there exist (at least for $q=p=2$) curves that satisfy both the energy inequality \eqref{eq: energy inequality} and a weak reformulation of \eqref{eq: diffusion equation intro}.

\subsection{$\Gamma$-KDE-Approximation: Second Order Example}\label{sec: 3.2}
We consider the class of energy functionals introduced in Example \ref{ex: chain rule} and prove that the obvious guess regarding a $\Gamma$-KDE-Approximation according to Definition \ref{def: Gamma KDE Approximation} succeeds.  

\begin{proposition}[$\Gamma$-KDE-Approximation: second order example]\label{prop: Gamma KDE}
Let $\K$ be a Lipschitz continuous kernel function according to Definition \ref{def: kernel function} with compact support $\mathrm{spt}(\K)\subset\{|x|\leq1\}$. Every $n\in\mathbb{N}$ is associated with parameters $h(n)\in(0,\frac{9}{10})$ and $\alpha(n)\in(0,1)$ such that $h(n)\downarrow 0$ and $\alpha(n)\to0$ as $n\uparrow+\infty$ and 
\begin{equation}\label{eq: h(n)}
\lim_{n\to+\infty}\frac{\log(h(n))+\log(\alpha(n))}{nh(n)^{2d}} \ = \ 0, \quad \quad \sum_{n\in\mathbb{N}}{\alpha(n)} \ < \ +\infty. 
\end{equation}
Under Assumptions \ref{ass: A0}, \ref{ass: A1}, \ref{ass: A2}, \ref{ass: A3}, the sequence of energy functionals $\phi_n: \mathcal{P}_p(\mathbb{R}^d)\to(-\infty, +\infty], \ n\in\mathbb{N}$, defined as
\begin{equation*}
\phi_n(\mu):= \int_{\mathbb{R}^d\times\mathbb{R}^d}{[F(u(x)) + V(x)u(x)+\frac{1}{2}W(x-y)u(x)]u(y)\mathrm{d}x\mathrm{d}y} 
\end{equation*}
whenever $\exists  y_1, ...,y_n\in\Omega: \ \mu=u\mathcal{L}^d, \ u(\cdot) =  \Big(\frac{1}{n}\sum_{i=1}^{n}{\K_{h(n)}(\cdot-y_i)}\Big)$, and $\phi_n(\mu):=+\infty$ else, is a $\Gamma$-KDE-Approximation of the energy functional $\phi$ from Example \ref{ex: chain rule}. 

The $\Gamma$-KDE-Approximation $\phi_n, \ n\in\mathbb{N}$, satisfies Assumption \ref{ass: 1} and therefore, all statements from Theorem \ref{thm: KDE-MM-Scheme consistency} hold true for the KDE-MM-Scheme corresponding to $(\phi_n)_{n\in\mathbb{N}}$. Moreover, assuming $\mu_t=u_t\mathcal{L}^d, \ t\geq0,$ is a locally absolutely continuous limit curve of discrete solutions $\bar{\mu}_{\tau_{k_l}}$ to the KDE-MM-Scheme solving the energy inequality \eqref{eq: energy inequality} according to Theorem \ref{thm: KDE-MM-Scheme consistency}\ref{itm: limit curve KDE-MM-Scheme} and \textsf{one} of the two conditions \ref{ass: B1}, \ref{ass: B2} is satisfied, then 
\begin{equation*}
L_F(u)\in\mathrm{L}^1_{\mathrm{loc}}([0, +\infty); \mathrm{W}^{1,1}(\mathbb{R}^d)), 
\end{equation*}
$\mu$  and its tangent vector field $v$ solve the differential equation 
\begin{equation}\label{eq: differential inclusion example}
v_t \ = \ -j_q\Big(\frac{\nabla L_F(u_t)}{u_t} + \nabla V + (\nabla W\ast\mu_t)\Big) \quad\quad \mu_t\text{-a.e.}
\end{equation}
for $\mathcal{L}^1$-a.e. $t>0$ ($j_q$ and $L_F$ defined in \eqref{eq: j_q} and \eqref{eq: L_F} respectively) and the energy dissipation equality \eqref{eq: ede}
for all $0\leq s\leq t < +\infty$, in which
\begin{equation}\label{subdifferential slope equality example}
|\partial^-\phi|^q(\mu(t)) \ = \ |\mu'|^p(t) \ = \ \Big\|\frac{\nabla L_F(u_t)}{u_t} + \nabla V + (\nabla W\ast\mu_t)\Big\|_{\mathrm{L}^q(\mu_t; \mathbb{R}^d)}^q  \quad \mathcal{L}^1\text{-a.e.,}
\end{equation}
and
\begin{equation*}
\lim_{l\to+\infty}\phi_{n(\tau_{k_l})}(\bar{\mu}_{\tau_{k_l}}(t)) \ = \ \phi(\mu(t)) \quad\quad \text{ for all } t\geq0. 
\end{equation*}
\end{proposition}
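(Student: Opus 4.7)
The plan is to establish, in turn, the three conditions \ref{itm: C1}, \ref{itm: C2}, \ref{itm: C3} of Definition \ref{def: Gamma KDE Approximation} together with Assumption \ref{ass: 1}, then invoke Theorem \ref{thm: KDE-MM-Scheme consistency} to produce the limit curve, and finally identify its differential equation through the limiting subdifferential calculus from Example \ref{ex: chain rule}. Condition \ref{itm: C1} is immediate by construction, and \ref{ass: phin1} is a straightforward uniform lower bound since $F$ attains a finite minimum, $V$ is continuous on the compact set $\bar{\Omega}$, and $W\geq 0$. For the $\Gamma$-liminf half of \ref{itm: C2} and the compactness \ref{ass: phin2}, I would reproduce the reasoning from Example \ref{ex: chain rule}: boundedness of $\phi_n(\mu_n)$ combined with the superlinear growth of $F$ and Dunford-Pettis forces equiintegrability of the densities $u_n$; $\mathcal{W}_p$-boundedness adds tightness; the supports lie in $\Omega+\overline{\mathcal{B}(0;h(n))}$; a weakly $L^1$-convergent subsequence is extracted and one passes to the limit term by term, using convexity/Fatou on the $F$-integral and $\mathcal{W}_p$-continuity of the $V$- and $W$-terms on the common bounded support.

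The heart of the argument is \ref{itm: C3}. Given $\mu=\rho\mathcal{L}^d\in\{\phi<+\infty\}$ with $\mathrm{spt}(\rho)\subset\Omega\subset\overline{\mathcal{B}(0;R)}$, Proposition \ref{prop: convergence rates}, the decay conditions \eqref{eq: h(n)} and Borel-Cantelli together yield $\|\hat{\rho}_{n,h(n)}-\K_{h(n)}*\rho\|_{\infty}\to 0$ almost surely; combined with the mollifier convergence $\K_{h(n)}*\rho\to\rho$ both $\mathcal{L}^d$-a.e.\ and in $L^1$, this gives $\hat{\rho}_{n,h(n)}\to\rho$ pointwise a.e.\ and in $L^1$ almost surely. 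The Wasserstein convergence $\mathcal{W}_p(\hat\mu_{n,h(n)},\mu)\to 0$ follows by the triangle inequality from \eqref{eq: Wasserstein convolution estimate} and \eqref{eq: Wasserstein BV estimate}. For $\phi_n(\hat\mu_{n,h(n)})\to\phi(\mu)$, the potential and interaction terms pass to the limit by Wasserstein continuity on uniformly bounded supports (since $V$ and $W$ are locally Lipschitz). The main obstacle is the internal-energy convergence $\int F(\hat\rho_{n,h(n)})\,\mathrm{d}x\to\int F(\rho)\,\mathrm{d}x$; I would control this by combining the pointwise convergence with a dominating sequence constructed from Jensen's pointwise estimate $F(\K_{h(n)}*\rho)\leq F(\rho)*\K_{h(n)}$, the subadditivity $F(r+s)\leq C_F(1+F(r)+F(s))$ applied to the uniform bound $\hat\rho_{n,h(n)}\leq \K_{h(n)}*\rho+\delta_n$ (with $\delta_n\to 0$), and the $L^1$-convergence of $F(\rho)*\K_{h(n)}$ to $F(\rho)$; a Vitali-type argument then delivers the desired limit almost surely.

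With $(\phi_n)_n$ confirmed as a $\Gamma$-KDE-Approximation satisfying Assumption \ref{ass: 1}, Theorem \ref{thm: KDE-MM-Scheme consistency}\ref{itm: limit curve KDE-MM-Scheme} yields the limit curve $\mu$ and the energy inequality \eqref{eq: energy inequality}. The inequality together with H\"older's inequality enforces $(|\partial^-\phi|\circ\mu)\cdot|\mu'|\in L^1_{\mathrm{loc}}$, so under \ref{ass: B1} or \ref{ass: B2} Proposition \ref{prop: chain rule} supplies the chain rule \eqref{eq: chain rule} and the single-valuedness of $\partial_l\phi(\mu(t))$ along $\mu$; Theorem \ref{thm: KDE-MM-Scheme consistency}\ref{itm: differential equation KDE-MM-Scheme} then delivers the abstract equation \eqref{eq: differential inclusion}, the energy dissipation equality \eqref{eq: ede}, and the pointwise energy convergence. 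Proposition \ref{prop: limiting subdifferential} identifies $D_l\phi(\mu_t)=\nabla L_F(u_t)/u_t+\nabla V+(\nabla W\ast\mu_t)$ and places $L_F(u_t)\in W^{1,1}(\mathbb{R}^d)$ for $\mathcal{L}^1$-a.e.\ $t$, converting \eqref{eq: differential inclusion} into the explicit form \eqref{eq: differential inclusion example}; integrating the resulting $W^{1,1}$-estimates in time via \eqref{eq: nabla L_F(u(t))}, together with the monotonicity of $\phi\circ\mu$ from \eqref{eq: energy inequality} and the integrability of $|\partial^-\phi|^q\circ\mu$, yields $L_F(u)\in L^1_{\mathrm{loc}}([0,+\infty);W^{1,1}(\mathbb{R}^d))$; finally the triple equality \eqref{subdifferential slope equality example} follows from \eqref{eq: tangent 1}, \eqref{eq: differential inclusion}, the $L^p$--$L^q$ duality $\|j_q(\zeta)\|_{L^p}^p=\|\zeta\|_{L^q}^q$, and \eqref{eq: subdifferential slope equality}.
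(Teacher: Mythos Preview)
Your proposal is correct and follows essentially the same route as the paper: the $\Gamma$-liminf via a lower-semicontinuous minorant supported on a neighbourhood of $\Omega$, the KDE recovery via Proposition \ref{prop: convergence rates} plus Borel--Cantelli, the Wasserstein convergence via \eqref{eq: Wasserstein convolution estimate}--\eqref{eq: Wasserstein BV estimate}, the internal-energy convergence via Jensen on the convolution combined with the subadditivity bound in \ref{ass: A1} and a generalized dominated/Vitali convergence, and then the passage to \eqref{eq: differential inclusion example} and \eqref{subdifferential slope equality example} through Theorem \ref{thm: KDE-MM-Scheme consistency}, Proposition \ref{prop: chain rule} and Proposition \ref{prop: limiting subdifferential}.

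One small point to tighten in your sketch: when you write ``subadditivity applied to $\hat\rho_{n,h(n)}\leq \K_{h(n)}*\rho+\delta_n$'', bear in mind that $F$ need not be nondecreasing (e.g.\ $F(s)=s\log s$), so $\hat\rho\leq a+b$ does not directly give $F(\hat\rho)\leq F(a+b)$. The paper handles this by a short case split at the minimizer $s_{\min}$ of $F$: for $\hat\rho_{n,h(n)}(x)\leq \mathbb{E}[\hat\rho_{n,h(n)}(x)]$ one bounds $F(\hat\rho)$ by $\max_{[0,s_{\min}]}F + F(\mathbb{E}[\hat\rho])-F(s_{\min})$, and for $\hat\rho_{n,h(n)}(x)>\mathbb{E}[\hat\rho_{n,h(n)}(x)]$ one applies the subadditivity in \ref{ass: A1} to $\hat\rho=\mathbb{E}[\hat\rho]+(\hat\rho-\mathbb{E}[\hat\rho])$. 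With that correction your Vitali argument goes through exactly as stated.
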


\begin{proof}
For the sake of a clear presentation with little notation, we set $V\equiv W\equiv0$ and $p=q=2$; it is absolutely straightforward to include $V$ and $W$ satisfying \ref{ass: A2} and \ref{ass: A3} respectively and the case $p\neq2$. 

Let $\Omega_1:=\{x\in\mathbb{R}^d: \ \mathrm{dist}(x, \Omega) < 1\}, \ \mathrm{dist}(x, \Omega):=\inf_{y\in\Omega}|x-y|$, choose $R>0$ in such a way that $\Omega_1\subset\{|x|\leq R\}$ and define 
\begin{equation*}
\bar{\phi}_1(\mu):= \begin{cases} \int_{\mathbb{R}}{F(u(x)) \mathrm{d}x} &\text{ if }\mu=u\mathcal{L}^d\ll\mathcal{L}^d\llcorner\Omega_1, \\
+\infty, &\text{ else.}\end{cases}
\end{equation*}

The $\Gamma$-liminf inequality \eqref{eq: Gamma-liminf} for $\phi_n, \phi$ follows from the lower semicontinuity of $\bar{\phi}_1$ in $(\mathcal{P}_2(\mathbb{R}^d), \mathcal{W}_2)$ and the facts that $\bar{\phi}_1\leq\phi_n$ for all $n\in\mathbb{N}$ and $\bar{\phi}_1(\mu)=\phi(\mu)$ if $\mu\ll\mathcal{L}^d\llcorner\Omega$. 

Let $\rho$ be a probability density with $\mu:=\rho\mathcal{L}^d\in\{\phi < +\infty\}$ and corresponding Kernel Density Estimators $\hat{\rho}_{n,h(n)}, \ \hat{\mu}_{n,h(n)}:=\hat{\rho}_{n,h(n)}\mathcal{L}^d$. 
By \eqref{eq: Wasserstein convolution estimate}, \eqref{eq: Wasserstein BV estimate}, the triangle inequality and Proposition \ref{prop: convergence rates}, we have 
\begin{equation*}
\mathcal{W}_2(\hat{\mu}_{n,h(n)}, \mu) \ \leq \  h(n) \ + \ R\cdot\sqrt{2\mathcal{L}^d(\Omega_1)\cdot C_{\K,\rho}\cdot \mathfrak{R}(n, \alpha(n))}
\end{equation*}
with probability at least $1-\alpha(n)$, $n\in\mathbb{N}$, where $C_{\K,\rho}$ is the constant depending on $\K,R,d$ and $\rho$ from \eqref{eq: convergence rates 1} and $\mathfrak{R}(n, \alpha(n))$ is defined according to \eqref{eq: convergence rates 2}. An application of \eqref{eq: h(n)} and Borel-Cantelli lemma then shows that 
\begin{equation*}
\lim_{n\to+\infty} \mathcal{W}_2(\hat{\mu}_{n,h(n)}, \mu) \ = \ 0 \quad\text{ a.s..}
\end{equation*}
Moreover, it follows from \eqref{eq: asymptotically unbiased}, \eqref{eq: convergence rates 1}, \eqref{eq: h(n)} and Borel-Cantelli lemma that
\begin{equation*}
\lim_{n\to+\infty}\hat{\rho}_{n,h(n)}(x) \ = \ \rho(x) \quad \text{ for } \mathcal{L}^d\text{-a.e. } x\in\mathbb{R}^d \quad \quad \text{a.s..}
\end{equation*}
We infer from the a.s. almost-everywhere-convergence of the probability densities and a general version of the dominated convergence theorem (see e.g. Chap. 2/ Ex. 20 in \cite{folland1999real}) that 
\begin{equation*}
\lim_{n\to+\infty}\phi_{n}(\hat{\mu}_{n, h(n)}) \ = \ \phi(\mu) \quad \quad \text{a.s.,}
\end{equation*}
using the following facts: according to Assumption \ref{ass: A1}, $F$ is bounded from below with global minimum at some $s_{\mathrm{min}}\in[0, +\infty)$, 
$F(\hat{\rho}_{n, h(n)}(x))=0$ outside $\Omega_1$,
\begin{equation*}
F(\hat{\rho}_{n,h(n)}(x)) \ \leq \ \max_{s\in[0,s_{\mathrm{min}}]}{F(s)} \ + \ F(\mathbb{E}[\hat{\rho}_{n,h(n)}(x)])  -  F(s_{\mathrm{min}})
\end{equation*}
if $\hat{\rho}_{n, h(n)}(x) \leq \mathbb{E}[\hat{\rho}_{n,h(n)}(x)]$, 
\begin{equation*}
F(\hat{\rho}_{n, h(n)}(x)) \ \leq \ C_F\big(1 + F(\mathbb{E}[\hat{\rho}_{n,h(n)}(x)]) + F(\hat{\rho}_{n, h(n)}(x)-\mathbb{E}[\hat{\rho}_{n, h(n)}(x)])\big)
\end{equation*}
if $\hat{\rho}_{n, h(n)}(x) > \mathbb{E}[\hat{\rho}_{n,h(n)}(x)]$, 
\begin{equation*}
F(\mathbb{E}[\hat{\rho}_{n,h(n)}(x)]) \ \leq \ \int_{\mathbb{R}^d}{F(\rho(y))\K_h(x-y)\mathrm{d}y}
\end{equation*}
by Jensen's inequality, and 
\begin{equation*}
\lim_{n\to+\infty} \sup_{x\in\mathbb{R}^d} F(|\hat{\rho}_{n, h(n)}(x)-\mathbb{E}[\hat{\rho}_{n, h(n)}(x)]|) \ = \ 0 \quad\quad\text{a.s.}
\end{equation*}
by \eqref{eq: convergence rates 1}, \eqref{eq: h(n)} and Borel-Cantelli lemma. 

The proof of \ref{itm: C3}, \ref{itm: C2} is complete. Further, it is not difficult to see that Assumption \ref{ass: 1} is satisfied so that Theorem \ref{thm: KDE-MM-Scheme consistency} is applicable. The rest of Proposition \ref{prop: Gamma KDE} follows from  an application of Young's inequality to  \eqref{eq: energy inequality}, Proposition \ref{prop: chain rule}, Theorem \ref{thm: KDE-MM-Scheme consistency}\ref{itm: differential equation KDE-MM-Scheme} and the characterization \eqref{eq: nabla L_F} of the limiting subdifferential of $\phi$ according to Proposition \ref{prop: limiting subdifferential}. The proof of Proposition \ref{prop: Gamma KDE} is complete. 
\end{proof}

Example \ref{ex: chain rule} includes energy functionals $\phi$ that are neither displacement convex nor $\lambda$-convex along constant speed geodesics in $(\mathcal{P}_p(\mathbb{R}^d), \mathcal{W}_p)$ because Assumption \ref{ass: A0} allows a non-convex domain $\Omega$, $F$ may not satisfy \ref{ass: B2} and Assumption \ref{ass: A2} allows $V$ that are not ($\lambda$)-convex. 

The differential equation \eqref{eq: differential inclusion example} for $t\mapsto\mu_t=u_t\mathcal{L}^d\ll\mathcal{L}^d\llcorner\Omega$ is a weak reformulation of the second order diffusion equation
\begin{equation}\label{eq: diffusion equation example}
\partial_t u - \nabla\cdot\Big(u j_q\Big(\nabla F'(u) + \nabla V + (\nabla W)\ast u \Big)\Big) \ = \ 0 \quad\text{ in } (0, +\infty)\times\Omega
\end{equation}
with no-flux boundary condition
\begin{equation}\label{eq: no-flux bc example}
uj_q\Big(\nabla F'(u) + \nabla V + (\nabla W) \ast u\Big)\cdot {\sf n} \ = \ 0 \quad \text{ on } (0, +\infty)\times\partial\Omega,
\end{equation}
cf. \eqref{eq: diffusion equation weak form}. We know by Proposition \ref{prop: Gamma KDE} and Theorem \ref{thm: KDE-MM-Scheme consistency} that there exist infinitely many appropriate correlations $\tau\mapsto n=n(\tau)$ between time step sizes and parameters (associated with $(\phi_n)_{n}$) so that the corresponding KDE-MM-Scheme,  performed according to the instructions from Definition \ref{def: KDE-MM-Scheme} and Theorem \ref{thm: KDE-MM-Scheme consistency}, functions as a sound approximation scheme for \eqref{eq: diffusion equation example}, \eqref{eq: no-flux bc example}. 
Our detailed examinations in Section \ref{sec: 3.3} demonstrate the practical selection of such appropriate parameters $n=n(\tau)$. 

\begin{remark}[Theoretical simplifications of the KDE-MM-Scheme]\label{rem: example simplifications KDE-MM-Scheme}
Let $\phi_n, \ \phi: \mathcal{P}_p(\mathbb{R}^d)\to(-\infty, +\infty]$ be the energy functionals from Proposition \ref{prop: Gamma KDE}.
The first part of this remark shows that for every time step size, the relaxed minimum problems \eqref{eq: KDE-MM-Scheme} of the KDE-MM-Scheme can be restricted to the selection of $y_{1, \tau}^m, ..., y_{n(\tau), \tau}^m$ from a fixed \textit{finite} set $\mathcal{S}_{\omega(\tau)}$. 

Let $\mathcal{S}_\omega$, for $\omega > 0$, be defined as a finite set of points in $\Omega$
such that
\begin{equation}\label{eq: finite set}
\bar{\Omega} \ \subset \ \bigcup_{s\in\mathcal{S}_\omega}\{x: \ |x-s| < \omega\}
\end{equation}
(which exists by compactness of $\bar{\Omega}$).  Every $n\in\mathbb{N}$ is associated with parameters $h(n)\in(0,\frac{9}{10})$, $\alpha(n)\in(0,1)$ and $\omega(n)\in(0,1)$ such that $h(n)\downarrow 0$ and $\alpha(n)\to0$ as $n\uparrow+\infty$, \eqref{eq: h(n)} is satisfied (as per our assumptions on the parameters from Proposition \ref{prop: Gamma KDE}) and in addition, 
\begin{equation}\label{eq: w(n)}
 \lim_{n\to+\infty} \frac{\omega(n)}{h(n)^{d+1}} \  = \ 0. 
\end{equation}   
We define the energy functionals $\psi_n: \mathcal{P}_p(\mathbb{R}^d)\to(-\infty, +\infty], \ n\in\mathbb{N},$ as
\begin{equation*}
\psi_n(\mu):=\begin{cases}\phi_n(\mu) &\text{ if } \exists y_1, ..., y_n\in\mathcal{S}_{\omega(n)}: \ \mu=\frac{1}{n}\sum_{i=1}^{n}{\K_{h(n)}(\cdot-y_i)}\mathcal{L}^d, \\ +\infty &\text{ else,}\end{cases}
\end{equation*} 
and $\mathfrak{p}_{\omega(n)}: \mathbb{R}^d\to\mathcal{S}_{\omega(n)}$ as a mapping satisfying 
\begin{equation*}
\mathfrak{p}_{\omega(n)}(x)\in\mathcal{S}_{\omega(n)}, \quad |\mathfrak{p}_{\omega(n)}(x) - x| \ = \ \min_{z\in\mathcal{S}_{\omega(n)}}{|z-x|} \quad \text{ for all } x\in\mathbb{R}^d. 
\end{equation*}
Following the same argumentation as in the proof of Proposition \ref{prop: Gamma KDE} and using \eqref{eq: w(n)} and the estimate
\begin{equation}\label{eq: estimate finite subset}
\sup_{x\in\mathbb{R}^d}\Big|\frac{1}{n}\sum_{i=1}^{n}{\K_{h(n)}(x-y_i)}-\frac{1}{n}\sum_{i=1}^{n}{\K_{h(n)}(x-\mathfrak{p}_{\omega(n)}(y_i))}\Big| \ \leq \ L_\K \cdot \frac{\omega(n)}{h(n)^{d+1}}
\end{equation}
($L_\K$ Lipschitz constant of $\K$), we can prove that $\psi_n\stackrel{\Gamma}{\to}\phi$ in $(\mathcal{P}_p(\mathbb{R}^d), \mathcal{W}_p)$ and whenever $X_1, ..., X_n$ is an i.i.d. sample from $\mu\in\{\phi<+\infty\}$ and 
\begin{equation*}
\check{\mu}_{n, h(n), \omega(n)}:=\frac{1}{n}\sum_{i=1}^{n}{\K_{h(n)}(\cdot-\mathfrak{p}_{\omega(n)}(X_i))}\mathcal{L}^d, \quad n\in\mathbb{N},
\end{equation*}
then
\begin{equation*}
\lim_{n\to+\infty}\mathcal{W}_p(\check{\mu}_{n, h(n), \omega(n)}, \mu) \ = \ 0 \quad \text{and} \quad \lim_{n\to+\infty}\psi_n(\check{\mu}_{n, h(n), \omega(n)}) \ = \ \phi(\mu)
\end{equation*}
with probability $1$. The functionals $\psi_n, \ n\in\mathbb{N},$ satisfy Assumption \ref{ass: 1}. 

If performing the KDE-MM-Scheme \eqref{eq: KDE-MM-Scheme} associated with 
\begin{equation}\label{eq: KDE-MM-Scheme Psi modified}
\Psi(\tau, Y , Z) \ := \ \psi_{n(\tau)}\Big(\frac{1}{n(\tau)}\sum_{i=1}^{n(\tau)}{\K_{h(\tau)}(\cdot-z_i)\mathcal{L}^d}\Big) \ + \ \frac{1}{p\tau^{p-1}}\sum_{i=1}^{n(\tau)}{\frac{|z_i-y_i|^p}{n(\tau)}}
\end{equation} 
in place of \eqref{eq: KDE-MM-Scheme Psi} and with initial data $\check{Y}_{\tau}^0:=\big(\mathfrak{p}_{\omega(\tau)}(X_1), ..., \mathfrak{p}_{\omega(\tau)}(X_{n(\tau)})\big)$ where $X_1, ..., X_{n(\tau)}$ is an i.i.d. sample from some initial measure $\mu^0\in\{\phi<+\infty\}$ and $\omega(\tau):=\omega(n(\tau))$, we can prove all the statements from Theorem \ref{thm: KDE-MM-Scheme consistency} for this modified version of the KDE-MM-Scheme, too; of course, also the second part of Proposition \ref{prop: Gamma KDE} still holds true. There exist infinitely many correlations $\tau\mapsto n(\tau)$ corresponding to $(\psi_n)_{n\in\mathbb{N}}$ such that \eqref{eq: KDE-MM-Scheme parameters} is satisfied and the selection procedure demonstrated in Section \ref{sec: 3.3} can be easily adapted for \eqref{eq: KDE-MM-Scheme}, \eqref{eq: KDE-MM-Scheme Psi modified} using the estimate \eqref{eq: estimate finite subset}. A typical by-product of such procedure for selecting appropriate parameters $n=n(\tau)$ is the proof that the modified KDE-MM-Scheme \eqref{eq: KDE-MM-Scheme}, \eqref{eq: KDE-MM-Scheme Psi modified} can be performed directly with initial data $Y_\tau^0:=\big(X_1,...,X_{n(\tau)}\big)$ instead of $\check{Y}_\tau^0$. 

It should be emphasized that the set $\{\psi_{n(\tau)} < +\infty\}$ merely consists of a \textit{finite} number of probability measures and that each of the successive steps \eqref{eq: KDE-MM-Scheme} associated with \eqref{eq: KDE-MM-Scheme Psi modified} consists in selecting a \textit{finite} number of points from a fixed \textit{finite} subset of $\Omega$. Moreover, if 
\begin{equation*}
\Psi(\tau, Y_\tau^{m-1}, Y_\tau^{m}) \ = \ \min\big\{\Psi(\tau, Y_\tau^{m-1}, Z): \ Z=(z_1, ..., z_{n(\tau)}), \ z_i\in\mathcal{S}_{\omega(\tau)}\big\}
\end{equation*}
for $Y_\tau^m=\big(y_{1,\tau}^m, ..., y_{n(\tau),\tau}^m\big), \ y_{i,\tau}^m\in\mathcal{S}_{\omega(\tau)},$ and $Y_\tau^m \neq  Y_\tau^{m-1}$, then
\begin{equation*}
\psi_{n(\tau)}\Big(\frac{1}{n(\tau)}\sum_{i=1}^{n(\tau)}{\K_{h(\tau)}(\cdot- y_{i, \tau}^{m})}\mathcal{L}^d\Big) \ < \ \psi_{n(\tau)}\Big(\frac{1}{n(\tau)}\sum_{i=1}^{n(\tau)}{\K_{h(\tau)}(\cdot- y_{i, \tau}^{m-1})}\mathcal{L}^d\Big). 
\end{equation*}
Consequently, for every $\tau \in \big(0, \big(\frac{1}{pB}\big)^{1/(p-1)}\big)$, there exists a discrete solution $\mu_\tau$ \eqref{eq: discrete solution} to the scheme \eqref{eq: KDE-MM-Scheme}, \eqref{eq: KDE-MM-Scheme Psi modified} such that
\begin{equation}\label{eq: merely finite steps}
\exists \ \bar{m}\in\mathbb{N}: \quad\quad \mu_\tau(t)\equiv\mu_\tau^{\bar{m}} \quad \text{ if } t\in((\bar{m}-1)\tau, +\infty), 
\end{equation}
meaning that for every time step size only a \textit{finite} number of minimization steps is necessary. 

A second simplification of the KDE-MM-Scheme concerns the potential energy 
\begin{equation*}
\mathsf{V}(\nu):=\int_{\mathbb{R}^d}{V(x)\mathrm{d}\nu}
\end{equation*}
and the interaction energy
\begin{equation*}
\mathsf{W}(\nu):=\frac{1}{2}\int_{\mathbb{R}^d}{\int_{\mathbb{R}^d}{W(x-y)\mathrm{d}\nu(x)}\mathrm{d}\nu(y)}. 
\end{equation*}
Let $\Omega_1, R$ be as in the proof of Proposition \ref{prop: Gamma KDE} and let $L_V$ and $L_W$ denote Lipschitz constants of $V\big|_{\bar{\Omega}_1}$ and $W\big|_{\{|x|\leq 2R\}}$ respectively. It is not difficult to see that 
\begin{equation*}
\Big|\mathsf{V}(\nu) - \frac{1}{n(\tau)}\sum_{i=1}^{n(\tau)}{V(y_i)}\Big| \ \leq L_V\ \cdot h(\tau)\cdot \mathcal{M}_{\K,p}^{1/p}
\end{equation*}
and 
\begin{equation*}
\Big|\mathsf{W}(\nu) - \frac{1}{2n(\tau)^2}\sum_{i=1}^{n(\tau)}{\sum_{j=1}^{n(\tau)}{W(y_i-y_j)}}\Big| \ \leq \ L_W\cdot h(\tau)\cdot \mathcal{M}_{\K,p}^{1/p}
\end{equation*}
whenever $\nu = \Big( \frac{1}{n(\tau)}\sum_{i=1}^{n(\tau)}{\K_{h(\tau)}(\cdot - y_i)}\Big) \mathcal{L}^d$ for some $y_i\in\Omega$, using the estimate \eqref{eq: 2}. Therefore, if condition \eqref{eq: KDE-MM-Scheme parameters} is satisfied (and hence $h(\tau)=o(\tau)$), we may replace $\phi_{n(\tau)}\Big(\frac{1}{n(\tau)}\sum_{i=1}^{n(\tau)}{\K_{h(\tau)}(\cdot-z_i)\mathcal{L}^d}\Big)$ in the definition \eqref{eq: KDE-MM-Scheme Psi} of $\Psi$ with
\begin{equation*}
\int_{\mathbb{R}^d}{F\Big(\frac{1}{n(\tau)}\sum_{i=1}^{n(\tau)}{\K_{h(\tau)}(x-z_{i})}\Big)\mathrm{d}x} \ + \ \frac{1}{n(\tau)} \sum_{i=1}^{n(\tau)}{\Big(V(z_i) + \frac{1}{n(\tau)} \sum_{i<j}{W(z_i-z_j)}\Big)}
\end{equation*}
and still, all statements from Theorem \ref{thm: KDE-MM-Scheme consistency} hold true. 
\end{remark}

The concept of $\Gamma$-KDE-Approximations is perfectly suited for fourth order diffusion equations, too: 

\begin{remark}[$\Gamma$-KDE-Approximation: fourth order case]\label{rem: Gamma KDE fourth order}
We can construct $\Gamma$-KDE-Approximations corresponding to fourth order examples of \eqref{eq: diffusion equation intro} in a similar way using the uniform convergence rates for KDE derivatives from \cite{kim2019uniform}, cf. Remark \ref{rem: fourth order}. 
\end{remark}

Considering the applications of \eqref{eq: diffusion equation intro} to concrete physical, biological, chemical, etc. processes (see Section \ref{sec: intro}), it is reasonable to assume that the domain $\Omega$ is bounded imposing a no-flux boundary condition as in Example \ref{ex: chain rule}; for the sake of completeness however, we give an example of a partial weak $\Gamma$-KDE-Approximation corresponding to \eqref{eq: diffusion equation intro} on $\Omega=\mathbb{R}^d$. 
\begin{remark}[$\Gamma$-KDE-Approximation: unbounded domain]\label{rem: example weak Gamma KDE}
We deal with the porous medium equation and the heat equation on $\mathbb{R}^d$ and construct a partial weak $\Gamma$-KDE-Approximation according to Remark \ref{rem: Gamma KDE Approximation variation} for the energy functional
\begin{equation*}
\mathsf{F}: \mathcal{P}_2(\mathbb{R}^d) \to (-\infty, +\infty], \quad \mathsf{F}(\mu):=\begin{cases}\int_{\mathbb{R}^d}{F(u(x))\mathrm{d}x} &\text{ if } \mu=u\mathcal{L}^d, \\ +\infty &\text {else, }\end{cases} 
\end{equation*}
 $F:[0,+\infty) \to \mathbb{R}^d$ defined as $F(s):= \frac{1}{m-1}s^m, \ m>1$ (for the porous medium equation) or $F(s):=s\log s$ (for the heat equation). Let $\K$ be a Lipschitz continuous kernel function with compact support  $\mathrm{spt}(\K)\subset\{|x|\leq1\}$ and let $R(n)\uparrow+\infty$ and $h(n)\downarrow0$ as $n\uparrow+\infty$ and \eqref{eq: h(n)} be satisfied for some $\alpha(n)\to0$. The sequence of energy functionals $\mathsf{F}_n: \mathcal{P}_2(\mathbb{R}^d) \to (-\infty, +\infty]$, 
\begin{equation*}
\mathsf{F}_n(\mu):=\begin{cases}\mathsf{F}(\mu) &\text{ if } \exists y_1, ..., y_n\in\{|x|\leq R(n)\}: \ \mu=\frac{1}{n}\sum_{i=1}^{n}{\K_{h(n)}(\cdot-y_i)}\mathcal{L}^d, \\ +\infty &\text{ else,}\end{cases}
\end{equation*}
forms a partial weak $\Gamma$-KDE-Approximation of $\mathsf{F}$. The weak $\Gamma$-liminf inequality \eqref{eq: weak Gamma-liminf} follows from the facts that $\mathsf{F}$ satisfies \eqref{eq: weak lsc} (cf. Prop. 4.1 in \cite{jordan1998variational}) and $\mathsf{F}\leq\mathsf{F}_n$. 
Moreover, whenever $\mu=\rho\mathcal{L}^d\in\{\phi<+\infty\}$ has compact support, $\hat{\rho}_{n,h(n)}, \ n\in\mathbb{N},$ is a sequence of Kernel Density Estimators \eqref{eq: kernel density estimator} for $\rho$ and $\hat{\mu}_{n, h(n)}:=\hat{\rho}_{n,h(n)}\mathcal{L}^d$, then \eqref{eq: KDE recovery sequence} holds true with probability $1$, cf. the proof of Proposition \ref{prop: Gamma KDE}. 

The topology induced by weak convergence is included as auxiliary topology so that we can apply a modified version of Theorem \ref{thm: KDE-MM-Scheme consistency} omitting condition \ref{ass: phin2} from Assumption \ref{ass: 1}, see Remarks \ref{rem: Gamma KDE Approximation variation} and \ref{rem: relaxation phin}; we note that condition \ref{ass: phin1} is satisfied and for small $\tau>0$ and all $\mu\in\mathcal{P}_p(\mathbb{R}^d)$ there exists a solution to the minimum problem
\begin{equation*}
\min_{\bar{\mu}\in\mathcal{P}_p(\mathbb{R}^d)}\Big\{\mathsf{F}(\bar{\mu}) + \frac{1}{p\tau^{p-1}}\mathcal{W}_p(\bar{\mu}, \mu)^p\Big\}
\end{equation*}
(cf.  Prop. 4.1 in \cite{jordan1998variational}). We refer the reader to Remark \ref{rem: selection unbounded domain} for the selection of appropriate parameters $n=n(\tau)$ according to \eqref{eq: KDE-MM-Scheme parameters}. 
\end{remark}

\subsection{Selection of Parameters for KDE-MM-Scheme}\label{sec: 3.3}
The KDE-MM-Scheme corresponding to \eqref{eq: diffusion equation intro} is performed along a $\Gamma$-KDE-Approximation $\phi_n, n\in\mathbb{N},$ of the energy functional $\phi$ as per the instructions from Definition \ref{def: KDE-MM-Scheme} and Theorem \ref{thm: KDE-MM-Scheme consistency}. 
Whilst Theorem \ref{thm: KDE-MM-Scheme consistency} shows the existence of appropriate parameters $n(\tau)\in\mathbb{N}, \ h(\tau):=h(n(\tau))$ assigned to every time step size $\tau>0$ according to \eqref{eq: KDE-MM-Scheme parameters}, the purpose of this section is to precisely quantify them. 

We demonstrate a possible strategy for proving Assumption \ref{ass: 2} with Example \ref{ex: chain rule} and the $\Gamma$-KDE-Approximation from Proposition \ref{prop: Gamma KDE}. As in the proof of Proposition \ref{prop: Gamma KDE}, we set $V\equiv W\equiv 0$ and $p=q=2$. For $M>0$, we define 
\begin{equation*}
\Lambda_M:=\{(a,b): a,b \geq0, \ |F(s_1)-F(s_2)| \ \leq \ a + b|s_1-s_2| \quad \forall s_1,s_2\in[0,M]\} 
\end{equation*}
 and the concave modulus of continuity 
\begin{equation}\label{eq: concave modulus of continuity}
f_M(r):= \inf\{a+br: \ (a,b) \in \Lambda_M\}, \quad r>0, 
\end{equation}
which is concave, increasing and bounded by $2\cdot\max_{s\in[0,M]} F(s)$ and satisfies 
\begin{equation*}
\lim_{r\downarrow0}f_M(r) \  = \ 0, 
\end{equation*}
see e.g. Sect. 4.1 in \cite{fleissner2020reverse}. 

\begin{theorem}[Selection of parameters: second order example]\label{thm: selection of parameters}
Let $\Omega$ be an open and bounded subset of $\mathbb{R}^d$ with $\mathrm{C}^2$-boundary $\partial\Omega$ and $F: [0, +\infty)\to\mathbb{R}$ satisfy \ref{ass: A1}. We define the $\Gamma$-KDE-Approximation $\phi_n: \mathcal{P}_2(\mathbb{R}^d)\to(-\infty, +\infty],$
\begin{equation*}
\phi_n(\mu):= \begin{cases} \int_{\mathbb{R}^d}{F(u(x))\mathrm{d}x} &\text{ if } \exists  y_i\in\Omega: \ \mu=u\mathcal{L}^d, \ u(\cdot) = \frac{1}{n}\sum_{i=1}^{n}{\K_{h(n)}(\cdot-y_i)}, \\ +\infty &\text{ else,}
\end{cases}
\end{equation*}
of the energy functional $\phi: \mathcal{P}_2(\mathbb{R}^d)\to(-\infty, +\infty]$, 
\begin{equation*}
\phi(\mu):= \begin{cases} \int_{\mathbb{R}^d}{F(u(x))\mathrm{d}x} &\text{ if } \mu=u\mathcal{L}^d\ll\mathcal{L}^d\llcorner\Omega, \\ +\infty &\text{ else,}\end{cases}
\end{equation*}
as per Proposition \ref{prop: Gamma KDE}. In addition to \eqref{eq: h(n)} and $h(n)\downarrow0$ as $n\uparrow+\infty$, we suppose 
\begin{equation*}
\liminf_{n\to+\infty} f_{\frac{||\K||_\infty}{h(n)^d}}\Bigg(\sqrt{\frac{\log(1/h(n))}{nh(n)^{2d}}}\Bigg) \ = \ 0. 
\end{equation*}
Every time step size $\tau>0$ is assigned a sample size $n(\tau)\in\mathbb{N}$ and bandwidth $h(\tau):=h(n(\tau))$ in such a way that $n(\tau)\uparrow+\infty$ as $\tau\downarrow0$, 
\begin{equation}\label{eq: s h(tau)}
\lim_{\tau\to0} \ \frac{h(\tau)}{\tau^2} \ = \ 0, 
\end{equation}
\begin{equation}\label{eq: s par 1}
\lim_{\tau\to0} \ \frac{\log(1/h(\tau))}{\tau^6\cdot n(\tau)h(\tau)^{2d}} \ = \ 0,
\end{equation}
and 
\begin{equation}\label{eq: s par 2}
\lim_{\tau\to0} \ \Bigg[\frac{1}{\tau}\cdot f_{\frac{||\K||_\infty}{h(\tau)^d}}\Bigg(\sqrt{\frac{\log(1/h(\tau))}{n(\tau)h(\tau)^{2d}}}\Bigg)\Bigg] \ = \ 0. 
\end{equation}
Then the parameters satisfy condition \eqref{eq: KDE-MM-Scheme parameters} from Theorem \ref{thm: KDE-MM-Scheme consistency} and all statements from Theorem \ref{thm: KDE-MM-Scheme consistency} and Proposition \ref{prop: Gamma KDE} apply. 
\end{theorem}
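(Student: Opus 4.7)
The first requirement of \eqref{eq: KDE-MM-Scheme parameters}, namely $h(\tau)/\tau^p\to 0$ with $p=2$, is immediate from \eqref{eq: s h(tau)}; the entire substance lies in verifying Assumption \ref{ass: 2}. My plan is to fix an arbitrary sequence $\nu_\tau=u_\tau\mathcal{L}^d\to\nu$ in $(\mathcal{P}_2(\mathbb{R}^d),\mathcal{W}_2)$ with $\sup_\tau\phi_{n(\tau)}(\nu_\tau)<+\infty$ and reduce the inequality \eqref{eq: main condition} to the single-functional Moreau-Yosida estimate from Remark \ref{rem: main ass}\ref{rem: main ass i} (Prop.~4.1 in \cite{fleissner2016gamma}). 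By \ref{itm: C1} each $u_\tau$ is a KDE with centres in $\Omega$ and $\|u_\tau\|_\infty\leq M_\tau:=\|\K\|_\infty/h(\tau)^d$, while \ref{itm: C2} places $\nu$ in $\{\phi<+\infty\}$.

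The reduction I have in mind goes through the double sandwich
\begin{equation*}
\phi_{n(\tau)}(\nu_\tau)\ \geq\ \phi(\tilde\nu_\tau)-o(\tau), \qquad \mathcal{Y}^{(2)}_\tau\phi_{n(\tau)}(\nu_\tau)\ \leq\ \mathcal{Y}^{(2)}_\tau\phi(\tilde\nu_\tau)+o(\tau),
\end{equation*}
where $\tilde\nu_\tau\in\{\phi<+\infty\}$ is a suitable $\phi$-admissible projection of $\nu_\tau$ obtained by transporting the leaked mass $\nu_\tau(\mathbb{R}^d\setminus\Omega)$ back into $\Omega$ along the inward normal over the $h(\tau)$-collar (exploiting the $\mathrm{C}^2$-regularity of $\partial\Omega$), so that Prop.~4.1 in \cite{fleissner2016gamma} applied to $\tilde\nu_\tau\to\nu$ within $\{\phi<+\infty\}$ yields the claim. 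For the left inequality I would use the concave modulus $f_{M_\tau}$ to control the boundary-leakage integral $\int_{\mathbb{R}^d\setminus\Omega}F(u_\tau)\,\mathrm{d}x$, splitting according to the level sets of $u_\tau$ and combining the $O(h(\tau))$ collar measure with the superlinear growth of $F$ and \eqref{eq: s par 2}. The right inequality requires the KDE lift: given a near-minimiser $w_\tau^\star=v_\tau^\star\mathcal{L}^d\in\{\phi<+\infty\}$ of $\mathcal{Y}^{(2)}_\tau\phi(\tilde\nu_\tau)$ satisfying the standard bounds $\mathcal{W}_2(w_\tau^\star,\tilde\nu_\tau)=O(\sqrt\tau)$ and $\phi(w_\tau^\star)=O(1)$, I build a competitor $\hat w_\tau=\hat v_\tau\mathcal{L}^d\in\{\phi_{n(\tau)}<+\infty\}$ by realising centres $z_{i,\tau}\in\Omega$ deterministically from a generic realisation of an i.i.d.\ sample from $w_\tau^\star$, mirroring the construction behind Proposition \ref{prop: Gamma KDE}. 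Proposition \ref{prop: convergence rates} then supplies $\|\hat v_\tau-\K_{h(\tau)}\ast v_\tau^\star\|_\infty\leq C\mathfrak{R}_\tau$ with $\mathfrak{R}_\tau:=\sqrt{\log(1/h(\tau))/(n(\tau)h(\tau)^{2d})}$, and combining \eqref{eq: Wasserstein convolution estimate}, \eqref{eq: Wasserstein BV estimate} and $f_{M_\tau}$ will give
\begin{equation*}
\mathcal{W}_2(\hat w_\tau,w_\tau^\star)\leq C\bigl(h(\tau)+\sqrt{\mathfrak{R}_\tau}\bigr), \qquad \bigl|\phi_{n(\tau)}(\hat w_\tau)-\phi(w_\tau^\star)\bigr|\leq Cf_{M_\tau}(\mathfrak{R}_\tau).
\end{equation*}

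Plugging $\hat w_\tau$ into the infimum for $\mathcal{Y}^{(2)}_\tau\phi_{n(\tau)}(\nu_\tau)$ and expanding $\mathcal{W}_2(\hat w_\tau,\nu_\tau)^2$ via the triangle inequality against $\mathcal{W}_2(w_\tau^\star,\tilde\nu_\tau)=O(\sqrt\tau)$ and $\mathcal{W}_2(\tilde\nu_\tau,\nu_\tau)=O(h(\tau))$, the dominant cross term after division by $2\tau$ becomes $O((h(\tau)+\sqrt{\mathfrak{R}_\tau})/\sqrt\tau)$, which is $o(\tau)$ by \eqref{eq: s h(tau)} and the $\mathfrak{R}_\tau=o(\tau^3)$ consequence of \eqref{eq: s par 1}; the quadratic remainders are absorbed by the same bounds, and the $\phi$-error $Cf_{M_\tau}(\mathfrak{R}_\tau)$ is $o(\tau)$ exactly by \eqref{eq: s par 2}.

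The hard part will be this simultaneous $o(\tau)$-bookkeeping, and in particular verifying that the boundary-leakage step and the Wasserstein projection $\nu_\tau\rightsquigarrow\tilde\nu_\tau$ produce errors no worse than $o(\tau)$ after division by $\tau$; the splitting argument in the leakage step rests crucially on the $L^\infty$-bound $M_\tau$ and the concave modulus $f_{M_\tau}$. The exponent $\tau^6$ in \eqref{eq: s par 1} is calibrated so that the $O(\sqrt\tau)$-scale Moreau-Yosida displacement interacts correctly with the $\sqrt{\mathfrak{R}_\tau}$-scale lift error, forcing $\mathfrak{R}_\tau=o(\tau^3)$, while \eqref{eq: s par 2} is tailored precisely to the $F$-modulus at the scale of the KDE uniform convergence rate $\mathfrak{R}_\tau$. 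Once Assumption \ref{ass: 2} is secured, Theorem \ref{thm: KDE-MM-Scheme consistency} together with Proposition \ref{prop: Gamma KDE} delivers the full conclusion.
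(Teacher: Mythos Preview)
Your high-level strategy matches the paper's: project $\nu_\tau$ into $\{\phi<+\infty\}$, invoke the single-functional estimate (Prop.~4.1 in \cite{fleissner2016gamma}), and close the gap between $\mathcal Y_\tau\phi$ and $\mathcal Y_\tau\phi_{n(\tau)}$ by lifting the $\phi$-minimiser to a KDE competitor. Two concrete points, however, do not go through as written.

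First, the bound $|\phi_{n(\tau)}(\hat w_\tau)-\phi(w_\tau^\star)|\le C f_{M_\tau}(\mathfrak R_\tau)$ is too strong. The KDE rate only controls $\|\hat v_\tau-\K_{h(\tau)}\!*\!v_\tau^\star\|_\infty$; the remaining piece $\|\K_{h(\tau)}\!*\!v_\tau^\star-v_\tau^\star\|_\infty$ need not be small and cannot be absorbed into $f_{M_\tau}(\mathfrak R_\tau)$. The paper splits this: Jensen's inequality supplies the one-sided bound $\int F(\K_{h(\tau)}\!*\!v_\tau^\star)\le\int F(v_\tau^\star)$ (all that the competitor argument needs), and the modulus $f_{M_\tau}$ is applied only to the comparison of $\hat v_\tau$ with $\K_{h(\tau)}\!*\!v_\tau^\star$, both of which are automatically $\le M_\tau$. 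Second, you never justify why the constant in $\|\hat v_\tau-\K_{h(\tau)}\!*\!v_\tau^\star\|_\infty\le C\mathfrak R_\tau$ is $\tau$-uniform; the constant $C_{\K,\rho}$ in Proposition~\ref{prop: convergence rates} depends on the sampled density $v_\tau^\star$, which varies with $\tau$. The paper invokes the maximum principle for the JKO step (Prop.~2.3 in \cite{agueh2002existence}) to obtain $\|v_\tau^\star\|_\infty\le\|\varrho_\tau\|_\infty\le 2M_\tau$ and then extracts from the proofs in \cite{kim2019uniform} a constant depending only on $\K$ and $\Omega$.

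Finally, your boundary projection (``transport leaked mass along the inward normal'') is the right idea but too loose; the paper realises it as the push-forward of $\nu_\tau$ through an explicit diffeomorphism $\mathcal A_{h(\tau)}:\Omega_{h(\tau)}\to\Omega$ built from the signed distance function (this is where the $C^2$-regularity of $\partial\Omega$ enters). A Taylor estimate on the Jacobian then gives $|\phi_{n(\tau)}(\nu_\tau)-\phi(\eta_\tau)|=O(h(\tau))$ via the uniform integrability of $L_F(2u_\tau)$ (guaranteed by the doubling condition in \ref{ass: A1}), not via the concave modulus or a level-set splitting as you sketch.
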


\begin{proof}
Let $\nu_\tau, \nu\in\mathcal{P}_2(\mathbb{R}^d)$ satisfy
\begin{equation*}
\lim_{\tau\to0}\mathcal{W}_2(\nu_\tau, \nu)  =  0, \quad \sup_\tau\phi_{n(\tau)}(\nu_\tau)  < +\infty. 
\end{equation*}
The following is our strategy for proving 
\begin{equation}\label{eq: nu_tau main condition}
\liminf_{\tau\to0}\frac{\phi_{n(\tau)}(\nu_\tau) - \mathcal{Y}_\tau\phi_{n(\tau)}(\nu_\tau)}{\tau} \ \geq \ \frac{1}{2}|\partial^-\phi|^2(\nu)
\end{equation}
($=$ condition \eqref{eq: main condition} from Assumption \ref{ass: 2}; $\mathcal{Y}_\tau\cdot$ denotes the $2$-Moreau-Yosida approximation \eqref{eq: Yosida}). First, we find probability measures $\eta_\tau\in\{\phi<+\infty\}$ satisfying
\begin{equation}\label{eq: eta_tau}
\lim_{\tau\to0}\frac{\phi_{n(\tau)}(\nu_\tau) - \phi(\eta_\tau)}{\tau} \ = \ 0, \quad \quad \lim_{\tau\to0}\frac{\mathcal{W}_2(\eta_\tau, \nu_\tau)}{\tau^2} \ = \ 0. 
\end{equation}
By Prop. 4.1 in \cite{fleissner2016gamma}, we have 
\begin{equation}\label{eq: eta_tau main condition}
\liminf_{\tau\to0}\frac{\phi(\eta_\tau) - \mathcal{Y}_\tau\phi(\eta_\tau)}{\tau} \ \geq \ \frac{1}{2}|\partial^-\phi|^2(\nu)
\end{equation}
since $\phi$ satisfies \ref{ass: phi2}, \ref{ass: phi3}, \ref{ass: phi4}, see Remark \ref{rem: main ass}\ref{rem: main ass i} and Example \ref{ex: chain rule}. The next step is to break the proof of \eqref{eq: nu_tau main condition} down into \eqref{eq: eta_tau main condition} and the proof of 
\begin{equation}\label{eq: main inequality}
\liminf_{\tau\to0}\frac{\mathcal{Y}_\tau\phi(\eta_\tau) - \mathcal{Y}_\tau\phi_{n(\tau)}(\nu_\tau)}{\tau} \ \geq \ 0: 
\end{equation}
indeed, the term $\phi_{n(\tau)}(\nu_\tau) - \mathcal{Y}_\tau\phi_{n(\tau)}(\nu_\tau)$ equals 
\begin{equation*}
\phi_{n(\tau)}(\nu_\tau) - \phi(\eta_\tau) \ + \ \phi(\eta_\tau) - \mathcal{Y}_\tau\phi(\eta_\tau) \ + \ \mathcal{Y}_\tau\phi(\eta_\tau) - \mathcal{Y}_\tau\phi_{n(\tau)}(\nu_\tau) 
\end{equation*}
so that \eqref{eq: nu_tau main condition} will directly follow from \eqref{eq: eta_tau}, \eqref{eq: eta_tau main condition} and \eqref{eq: main inequality}. The estimate $\mathcal{W}_2(\eta_\tau, \nu_\tau)=o(\tau^2)$ will help to prove \eqref{eq: main inequality}. 

Let us construct suitable measures $\eta_\tau\in\{\phi<+\infty\}$. 
The support of $\nu_\tau$ lies in $\Omega_{h(\tau)}:=\{x\in\mathbb{R}^d: \ \mathrm{dist}(x, \Omega) < h(\tau)\}, \ \mathrm{dist}(x, \Omega):=\inf_{y\in\Omega}|x-y|,$ and we `compress' its density function $\upsilon_\tau$ into $\Omega$. 
We define $\bar{d}_{\Omega}: \mathbb{R}^d\to\mathbb{R}$ as the signed distance function 
\begin{equation*}
\bar{d}_{\Omega}(x) \ := \ \mathrm{dist}(x, \Omega) - \mathrm{dist}(x, \mathbb{R}^d\setminus\Omega). 
\end{equation*}
As $\partial\Omega$ is $\mathrm{C}^2$, $\bar{d}_\Omega$ is twice continuously differentiable in a tubular neighbourhood $\mathcal{U}:=\{-h_\mathcal{U} < \bar{d}_\Omega < h_\mathcal{U}\} = \{x+\sigma\nabla\bar{d}_\Omega(x): \ x\in\partial\Omega, \ \sigma\in(-h_\mathcal{U}, h_\mathcal{U})\}$ of $\partial\Omega$,  $\nabla\bar{d}_\Omega$ represents the outer normal to $\Omega$ and for all $y\in\mathcal{U}$ there exists a unique $x\in\partial\Omega$ s.t. $y=x+\bar{d}_\Omega(y)\nabla\bar{d}_\Omega(x)$ and $\nabla\bar{d}_\Omega(y)=\nabla\bar{d}_\Omega(x)$, 
see Thm. 4.3 (i) and Rem. 4.2 in \cite{ambrosio2000calculus}. 
Let $\chi: \mathbb{R}^d\to[0, 1]$ be a smooth function satisfying
\begin{equation*}
\chi(x) \begin{cases} = 0 &\text{ if } \big(\bar{d}_\Omega(x)\geq h_\mathcal{U}/2 \text{ or } \bar{d}_\Omega(x)\leq -h_\mathcal{U}/2\big), \\ =1 &\text{ if } \bar{d}_\Omega(x)\in[0,h_\mathcal{U}/4], \\ \in[0,1] &\text{ else}.    \end{cases}
\end{equation*}
For $h\in(0, h_\mathcal{U}/4)$ small enough, let's say for all $h\in(0,h^\star)$, the function $\mathcal{A}_{h}: \mathbb{R}^d\to\mathbb{R}^d$, 
\begin{equation*}
\mathcal{A}_{h}(x) := \begin{cases} x - h\cdot\chi(x)\cdot\nabla\bar{d}_\Omega(x) &\text{ if } x\in\mathcal{U}, \\ x &\text{ else},\end{cases}
\end{equation*}
is a diffeomorphism mapping $\Omega_h$ onto $\Omega$; since $h(\tau)\downarrow0$ as $\tau\downarrow0$, we may suppose w.l.o.g. that $h=h(\tau)\in(0,h^\star)$. We define the measure $\eta_\tau$ as the push-forward of $\nu_\tau$ through the mapping $\mathcal{A}_{h(\tau)}$: 
\begin{equation*}
\eta_\tau \ := \ \mathcal{A}_{h(\tau)_{\#}}\nu_\tau \ = \  \mathcal{A}_{h(\tau)_{\#}}(\upsilon_\tau\mathcal{L}^d). 
\end{equation*}
The support of $\eta_\tau$ is a subset of $\Omega$, $\eta_\tau\ll\mathcal{L}^d$ and its Lebesgue density $\varrho_\tau$ satisfies
\begin{equation}\label{eq: density of push-forward}
|\det(\mathrm{D}\mathcal{A}_{h(\tau)}(x))|\varrho_\tau(\mathcal{A}_{h(\tau)}(x)) \ = \ \upsilon_\tau(x), \quad \quad x\in\mathbb{R}^d, 
\end{equation}
by the change of variables formula, where $\mathrm{D}\mathcal{A}_{h(\tau)}$ denotes the differential of $\mathcal{A}_{h(\tau)}$. It is not difficult to see that the mapping 
\begin{equation*}
(0,h^\star)\ni h\mapsto a_x(h):=\det(\mathrm{D}\mathcal{A}_{h}(x))
\end{equation*}
is smooth for every $x\in\mathbb{R}^d$ and the derivative $a_x'(h)$ is uniformly bounded in both $h\in(0, h^\star)$ and $x\in\mathbb{R}^d$: 
\begin{equation*}
\sup_{h\in(0, h^\star), x\in\mathbb{R}^d} { |a_x'(h)| } \ =: \ C_a < +\infty. 
\end{equation*}
For the rest of the proof, let the time step size $\tau>0$ be so small that $h(\tau)\leq\min\big\{h^\star, \frac{1}{2C_a}\big\}$ and hence
\begin{equation}\label{eq: estimate a_x}
\frac{1}{2} \ \leq \ a_x(s) \ \leq \frac{3}{2} \quad\text{for all } s\in(0,h(\tau)), \ x\in\mathbb{R}^d
\end{equation}
by Taylor's theorem (as $a_x(0)=1$). The change of variables formula and \eqref{eq: density of push-forward} yield 
\begin{equation*}
\phi_{n(\tau)}(\nu_\tau) - \phi(\eta_\tau) \ = \ \int_{\mathbb{R}^d}{\Big[F(\upsilon_\tau(x)) - F\Big(\frac{\upsilon_\tau(x)}{\det(\mathrm{D}\mathcal{A}_{h(\tau)}(x))}\Big)\det(\mathrm{D}\mathcal{A}_{h(\tau)}(x))\Big]\mathrm{d}x};
\end{equation*}
we apply Taylor's theorem to the integrand and can estimate
\begin{equation}\label{eq: integrand push-forward}
\Big|F(\upsilon_\tau(x))- F\Big(\frac{\upsilon_\tau(x)}{a_x(h(\tau))}\Big)a_x(h(\tau))\Big| \ \leq \ h(\tau)\cdot C_a\cdot L_F(2\upsilon_\tau(x))
\end{equation}
because $L_F$ (see \eqref{eq: L_F}) is increasing and $\frac{\upsilon_\tau(x)}{a_x(s)}\leq2\upsilon_\tau(x)$ for all $s\in(0, h(\tau))$. We deduce 
\begin{equation*}
\lim_{\tau\to0}\frac{\phi_{n(\tau)}(\nu_\tau) - \phi(\eta_\tau)}{\tau} \ = \ 0
\end{equation*}
from \eqref{eq: integrand push-forward}, \eqref{eq: s h(tau)} and the facts that
\begin{equation*}
0 \ \leq \ L_F(2\upsilon_\tau) \ \leq \ C_F(1+2C_F(1+2F(\upsilon_\tau))) - 2\min_{r\ge0}{F(r)}
\end{equation*}
by \ref{ass: A1}, $L_F(0)=0$ and $\sup_\tau{\phi_{n(\tau)}(\nu_\tau)} < +\infty$. Moreover, the definition of $\eta_\tau$ and \eqref{eq: s h(tau)} allow the simple estimate
\begin{equation*}
\mathcal{W}_2(\eta_\tau, \nu_\tau) \ \leq \ \Big(\int_{\mathbb{R}^d}{|x-\mathcal{A}_{h(\tau)}(x)|^2\mathrm{d}\nu_\tau(x)}\Big)^{1/2} \ 
= \ o(\tau^2). 
\end{equation*}
So we have proved \eqref{eq: eta_tau}; all that remains to be proved is \eqref{eq: main inequality}. The density $\varrho_\tau$ of $\eta_\tau$ satisfies 
\begin{equation}\label{eq: estimate varrho_tau}
||\varrho_\tau||_\infty \ \leq \ 2||\upsilon_\tau||_\infty \ \leq \ \frac{2||\K||_\infty}{h(\tau)^d}
\end{equation}
by \eqref{eq: density of push-forward}, \eqref{eq: estimate a_x} and the fact that $\nu_\tau=\upsilon_\tau\mathcal{L}^d\in\{\phi_{n(\tau)}<+\infty\}$. By Prop. 2.3 in \cite{agueh2002existence}, there is a unique measure $\varpi_\tau=\rho_\tau\mathcal{L}^d$ satisfying
\begin{equation*}
\mathcal{Y}_\tau\phi(\eta_\tau) \ = \ \phi(\varpi_\tau) + \frac{1}{2\tau}\mathcal{W}_2(\varpi_\tau, \eta_\tau)^2
\end{equation*}
and 
\begin{equation}\label{eq: estimate rho_tau}
||\rho_\tau||_\infty \ \leq \ ||\varrho_\tau||_\infty \ \leq \ \frac{2||\K||_\infty}{h(\tau)^d}
\end{equation}
 (the author of \cite{agueh2002existence} assumes that the PDE domain is a convex subset of $\mathbb{R}^d$ but his proof of Prop. 2.3 shows that the corresponding statement holds true for non-convex $\Omega$, too). We note that the sequence 
\begin{equation}\label{eq: rho_tau Wasserstein estimate}
\Big(\frac{\mathcal{W}_2(\varpi_\tau, \eta_\tau)}{\sqrt{\tau}}\Big)_{\tau>0} \quad\quad\text{ is bounded }
\end{equation}
because $\phi$ is bounded from below and $\sup_\tau\mathcal{Y}_\tau\phi(\eta_\tau)\leq\sup_\tau\phi(\eta_\tau) < +\infty$. 
Using \eqref{eq: estimate rho_tau} and applying Thm. 27, Lem. 11, Lem. 14 and Cor. 15 in \cite{kim2019uniform} (cf. Proposition \ref{prop: convergence rates}) and the corresponding proofs therein, we construct a probability measure $\hat{\varpi}_\tau:=\hat{\rho}_\tau\mathcal{L}^d\in\{\phi_{n(\tau)}<+\infty\}$ with Lebesgue density
\begin{equation*}
\hat{\rho}_\tau(\cdot) \ = \ \frac{1}{n(\tau)}\sum_{i=1}^{n(\tau)}{\K_{h(\tau)}(\cdot-y_i)}, \quad y_1,...,y_{n(\tau)}\in\Omega
\end{equation*}
that satisfies
\begin{equation}\label{eq: hat rho_tau estimate}
||\hat{\rho}_\tau-\tilde{\rho}_\tau||_\infty \ \leq \ C_{\K, \Omega} \cdot  \Bigg(\sqrt{\frac{\log(1/h(\tau))}{n(\tau)h(\tau)^{2d}}} + \frac{\log(1/h(\tau))}{n(\tau)h(\tau)^{d}}\Bigg),
\end{equation}
in which $C_{\K, \Omega}$ is a constant depending on $\K$ and $\Omega$ only and 
\begin{equation*}
\tilde{\rho}_\tau(\cdot) \ := \ \int_{\mathbb{R}^d}{\K_{h(\tau)}(\cdot-y)\rho_{\tau}(y)\mathrm{d}y}. 
\end{equation*}
We obtain 
\begin{equation}\label{eq: hat rho_tau Wasserstein estimate}
\lim_{\tau\to0}\frac{\mathcal{W}_2(\hat{\varpi}_\tau, \varpi_\tau)}{\sqrt{\tau^3}} \ = \ 0
\end{equation}
in direct consequence of the triangle inequality, \eqref{eq: Wasserstein convolution estimate}, \eqref{eq: s h(tau)}, \eqref{eq: Wasserstein BV estimate}, \eqref{eq: hat rho_tau estimate} and \eqref{eq: s par 1}. Furthermore, Jensen's inequality provides us with the estimate
\begin{equation*}
\int_{\mathbb{R}^d}{F(\tilde{\rho}_\tau(x))\mathrm{d}x} \ \leq \ \int_{\mathbb{R}^d}{F(\rho_\tau(x))\mathrm{d}x}
\end{equation*}
and we have 
\begin{equation*}
\lim_{\tau\to0} \ \frac{1}{\tau}\cdot\Big(\int_{\mathbb{R}^d}{F(\tilde{\rho}_\tau(x))\mathrm{d}x} \ - \ \int_{\mathbb{R}^d}{F(\hat{\rho}_\tau(x))\mathrm{d}x}\Big) \ = \ 0
\end{equation*}
by \eqref{eq: hat rho_tau estimate}, \eqref{eq: h(n)}, \eqref{eq: s par 2} and the facts that $\tilde{\rho}_\tau, \hat{\rho}_\tau \equiv 0$ on $\mathbb{R}^d\setminus\Omega_{h(\tau)}$, both $||\hat{\rho}_\tau||_\infty$ and $||\tilde{\rho}_\tau||_\infty$ are bounded from above by $\frac{||\K||_\infty}{h(\tau)^d}$ and $f_M(Cr)\leq (C\vee 1)\cdot f_M(r)$ for every $C,M, r > 0$; it follows that 
\begin{equation}\label{eq: hat rho_tau energy estimate}
\liminf_{\tau\to0}\frac{\phi(\varpi_\tau)-\phi_{n(\tau)}(\hat{\varpi}_\tau)}{\tau} \ \geq \ 0. 
\end{equation}
We can conclude \eqref{eq: main inequality} from \eqref{eq: hat rho_tau energy estimate}, \eqref{eq: hat rho_tau Wasserstein estimate}, \eqref{eq: eta_tau} and \eqref{eq: rho_tau Wasserstein estimate} using the fact that
\begin{equation*}
\mathcal{Y}_\tau\phi(\eta_\tau) - \mathcal{Y}_\tau\phi_{n(\tau)}(\nu_\tau) \ \geq \ \phi(\varpi_\tau)-\phi_{n(\tau)}(\hat{\varpi}_{\tau}) + \frac{1}{2\tau}(\mathcal{W}_2(\varpi_\tau, \eta_\tau)^2 - \mathcal{W}_2(\hat{\varpi}_\tau, \nu_\tau)^2), 
\end{equation*}
the triangle inequality
\begin{equation*}
\mathcal{W}_2(\hat{\varpi}_\tau, \nu_\tau) \ \leq \ \mathcal{W}_2(\hat{\varpi}_\tau, \varpi_\tau) + \mathcal{W}_2(\varpi_\tau, \eta_\tau) + \mathcal{W}_2(\eta_\tau, \nu_\tau)
\end{equation*}
and the estimate 
\begin{equation*}
\mathcal{W}_2(\varpi_\tau, \eta_\tau)^2 - \mathcal{W}_2(\hat{\varpi}_\tau, \nu_\tau)^2 \ \geq \ - 2\cdot (\mathcal{W}_2(\eta_\tau, \nu_\tau) + \mathcal{W}_2(\varpi_\tau, \hat{\varpi}_\tau))\cdot\mathcal{W}_2(\hat{\varpi}_\tau, \nu_\tau). 
\end{equation*}
The proof of Theorem \ref{thm: selection of parameters} is complete. 
\end{proof}

Assuming the initial datum is uniformly bounded, i.e. there exists $M>0$ such that
\begin{equation}\label{eq: uniformly bounded initial datum}
||u^0||_\infty \ \leq \ M, \quad\quad \mu^0=u^0\mathcal{L}^d\in\{\phi<+\infty\}, 
\end{equation}
it may be reasonable to modify the KDE-MM-Scheme and the selection of the parameters correspondingly: 

\begin{remark}[Uniformly bounded solution]\label{rem: uniformly bounded solution}
Let $\phi$ be the energy functional from Theorem \ref{thm: selection of parameters}, in which $\Omega$ is an open and bounded subset of $\mathbb{R}^d$ with $\mathrm{C}^2$-boundary $\partial\Omega$ and $F: [0, +\infty)\to\mathbb{R}$ satisfies \ref{ass: A1}. 

We define $\phi_n: \mathcal{P}_2(\mathbb{R}^d)\to(-\infty, +\infty]$, 
\begin{equation*}
\phi_n(\mu) \ := \begin{cases} \int_{\mathbb{R}^d}{F(u(x))\mathrm{d}x} &\text{ if } \exists y_i\in\Omega: \ \mu=u\mathcal{L}^d, \ u(\cdot) =  \frac{1}{n}\sum_{i=1}^{n}{\K_{h(n)}(\cdot-y_i)}, \\ +\infty &\text{ else.}\end{cases}
\end{equation*}
Assuming $h(n)\downarrow0$ as $n\uparrow+\infty$ and 
\begin{equation*}
\lim_{n\to+\infty}\frac{\log(h(n))+\log(\alpha(n))}{nh(n)^d} \ = \ 0, \quad \quad \sum_{n\in\mathbb{N}}{\alpha(n)} < +\infty, 
\end{equation*}
the sequence $\phi_n, \ n\in\mathbb{N},$ forms a partial $\Gamma$-KDE-Approximation of $\phi$ according to Remark \ref{rem: Gamma KDE Approximation variation} in which Kernel Density Estimation almost surely yields a recovery sequence \eqref{eq: KDE recovery sequence} for all $\mu\in\{\phi<+\infty\}$ with uniformly bounded Lebesgue density, see Proposition \ref{prop: Gamma KDE} and Thm. 27, Lem. 11, Lem. 14 and Cor. 15 in \cite{kim2019uniform} and the corresponding proofs therein for a refinement of \eqref{eq: convergence rates 1}, \eqref{eq: convergence rates 2}. 
If $M>0$ is a given uniform upper bound \eqref{eq: uniformly bounded initial datum} for initial data, we fix a constant $\bar{M}>M$ and set a correlation $\tau\mapsto n(\tau)$ between time step sizes $\tau>0$ and parameters $n(\tau)\in\mathbb{N}, \ h(\tau):=h(n(\tau))>0$ in such a way that $n(\tau)\uparrow+\infty$ as $\tau\downarrow0$, 
\begin{equation}\label{eq: s h(tau) bounded case}
\lim_{\tau\to0} \ \frac{h(\tau)}{\tau^2} \ = \ 0, 
\end{equation}
\begin{equation}\label{eq: s par 1 bounded case}
\lim_{\tau\to0} \ \frac{\log(1/h(\tau))}{\tau^6\cdot n(\tau)h(\tau)^{d}} \ = \ 0,
\end{equation}
and 
\begin{equation}\label{eq: s par 2 bounded case}
\lim_{\tau\to0} \ \Bigg[\frac{1}{\tau}\cdot f_{\bar{M}}\Bigg(\sqrt{\frac{\log(1/h(\tau))}{n(\tau)h(\tau)^{d}}}\Bigg)\Bigg] \ = \ 0
\end{equation}
($f_{\bar{M}}$ being the concave modulus of continuity \eqref{eq: concave modulus of continuity}). It is possible to refine estimates \eqref{eq: estimate varrho_tau}, \eqref{eq: estimate rho_tau}, \eqref{eq: hat rho_tau estimate} from the proof of Theorem \ref{thm: selection of parameters} and thus determine $\epsilon_\tau:=C_1\cdot h(\tau) + C_2\cdot\sqrt{\frac{\log(1/h(\tau))}{n(\tau)h(\tau)^{d}}}$ ($C_1, C_2 > 0$ can be calculated) so that 
\begin{equation}\label{eq: nu_tau main condition bounded case}
\liminf_{\tau\to0}\frac{\phi_{n(\tau)}(\nu_\tau) - \tilde{\mathcal{Y}}_{\tau, \epsilon_\tau}\phi_{n(\tau)}(\nu_\tau)}{\tau} \ \geq \ \frac{1}{2}|\partial^-\phi|^2(\nu)
\end{equation}
whenever 
\begin{equation*}
\nu_\tau=\upsilon_\tau\mathcal{L}^d, \quad ||\upsilon_\tau||_\infty+\epsilon_\tau \leq \bar{M}, \quad \lim_{\tau\to0}\mathcal{W}_2(\nu_\tau, \nu)  =  0, \quad \sup_\tau\phi_{n(\tau)}(\nu_\tau)  < +\infty,  
\end{equation*}
and 
\begin{equation*}
\tilde{\mathcal{Y}}_{\tau, \epsilon_\tau}\phi_{n(\tau)}(\nu_\tau) := \inf\Big\{\phi_{n(\tau)}(\sigma) + \frac{1}{2\tau}\mathcal{W}_2(\sigma, \nu_\tau)^2: \ \sigma=w\mathcal{L}^d, \ ||w||_\infty\leq||\upsilon_\tau||_\infty + \epsilon_\tau \Big\}. 
\end{equation*}

We modify the KDE-MM-Scheme associated with $(\phi_n)_n$ and $\tau\mapsto n(\tau)$ by replacing $\Psi(\tau, Y_\tau^{m-1},\cdot)$ from \eqref{eq: KDE-MM-Scheme Psi} with
\begin{equation*}
\Psi_m(\tau, Y_\tau^{m-1}, Z) \ := \ \begin{cases} \Psi(\tau, Y_\tau^{m-1}, Z) &\text{ if } \big\|\frac{1}{n(\tau)}\sum_{i=1}^{n(\tau)}{\K_{h(\tau)}(\cdot-z_i)}\big\|_\infty \leq M_{\tau, m}, \\ +\infty &\text{ else,}\end{cases}
\end{equation*}
in each minimum problem \eqref{eq: KDE-MM-Scheme}, where $M_{\tau, m}:=\frac{M+\bar{M}}{2} + m\cdot\epsilon_\tau$. This modified KDE-MM-Scheme is performed with time step sizes $(\tau_k)_{k\in\mathbb{N}}, \ \tau_k\downarrow0,$ and error terms $\gamma_{\tau_k}^{(m)}$ satisfying \eqref{eq: error term non-uniform}; assuming the initial probability density $u^0$ is uniformly bounded from above by $M$ \eqref{eq: uniformly bounded initial datum},  $X_1, ..., X_{n(\tau_k)}$ is an i.i.d. sample from $\mu^0=u^0\mathcal{L}^d$  and the initial data $Y_{\tau_k}^0$ for the modified KDE-MM-Scheme are defined as $\big(X_1, ..., X_{n(\tau_k)}\big)$, then the following holds good for the corresponding discrete solutions $\bar{\mu}_{\tau_k}$ \eqref{eq: discrete solution}: 

There exists a locally absolutely continuous curve $\mu: [0, +\infty)\to\mathcal{P}_2(\mathbb{R}^d)$, $\mu(t)=u(t, \cdot)\mathcal{L}^d=u_t\mathcal{L}^d,$ with tangent vector field $v$ such that
\begin{equation*}
\lim_{k\to+\infty}\mathcal{W}_2(\bar{\mu}_{\tau_{k}}(t), \mu(t)) \ = \ 0 \quad \text{ and } \quad \lim_{k\to+\infty}\phi_{n(\tau_{k})}(\bar{\mu}_{\tau_{k}}(t)) \ = \ \phi(\mu(t)) 
\end{equation*}
for all $t\geq0$ with probability $1$, 
\begin{equation*}
\max_{t\geq0}||u_t||_\infty \ = \ ||u^0||_\infty, \quad\quad L_F(u)\in\mathrm{L}^2_{\mathrm{loc}}([0, +\infty); \mathrm{W}^{1,2}(\mathbb{R}^d))
\end{equation*}
($L_F$ defined in \eqref{eq: L_F}), and the triple $(\mu, u, v)$ of curve $\mu$ in $\mathcal{P}_2(\mathbb{R}^d)$, corresponding time-dependent density function $u$ and tangent vector field $v$ is the unique solution to the differential equation
\begin{equation}\label{eq: differential equation unique}
v_t \ = \ -\frac{\nabla L_F(u_t)}{u_t}  \quad\quad \mu(t)\text{-a.e.}\quad\text{ for } \mathcal{L}^1\text{-a.e. } t>0, \quad \mu(t)\ll\mathcal{L}^d\llcorner\Omega, 
\end{equation}
with $u\in\mathrm{L}^\infty([0, +\infty)\times\Omega)$ and initial datum $\mu(0)=\mu^0$; \eqref{eq: differential equation unique} is a weak reformulation of the second order diffusion equation
\begin{equation*}
\partial_t u - \nabla\cdot(u \nabla F'(u))  \ = \ 0 \quad\text{ in } (0, +\infty)\times\Omega
\end{equation*}
with no-flux boundary condition
\begin{equation*}
u\nabla F'(u) \cdot {\sf n} \ = \ 0 \quad \text{ on } (0, +\infty)\times\partial\Omega. 
\end{equation*}

Moreover, $\mu$ solves the energy dissipation equality \eqref{eq: ede} (with $q=p=2$) for all $0\leq s\leq t < +\infty$ and
\begin{equation*}
|\partial^-\phi|(\mu(t)) \ = \ |\mu'|(t) \ = \ \Big\|\frac{\nabla L_F(u_t)}{u_t}\Big\|_{\mathrm{L}^2(u_t\mathcal{L}^d; \mathbb{R}^d)} \quad\quad \mathcal{L}^1\text{-a.e..}
\end{equation*}

We can prove this statement by using \eqref{eq: nu_tau main condition bounded case}, following and adapting the proofs of Theorems \ref{thm: MM approach I}, \ref{thm: KDE-MM-Scheme consistency} and \ref{thm: selection of parameters}, applying Proposition \ref{prop: chain rule} and finally using the uniqueness of bounded weak solutions (see e.g. Prop. 4.8 in \cite{ambrosio2006stability} for the uniqueness statement). 
\end{remark}

Lastly, we sketch the selection of parameters in accordance with Assumption \ref{ass: 2} and condition \eqref{eq: KDE-MM-Scheme parameters} for the example from Remark \ref{rem: example weak Gamma KDE}. 

\begin{remark}[Selection of parameters: unbounded domain]\label{rem: selection unbounded domain}
Let $\mathsf{F}$ be the energy functional and $\mathsf{F}_n, \ n\in\mathbb{N},$ the associated partial weak $\Gamma$-KDE-Approximation from Remark \ref{rem: example weak Gamma KDE}. We define $\bar{\mathsf{F}}_n: \mathcal{P}_2(\mathbb{R}^d)\to(-\infty, +\infty]$ as 
\begin{equation*}
\bar{\mathsf{F}}_n(\mu) := \begin{cases} \mathsf{F}(\mu) &\text{ if } \mu=u\mathcal{L}^d\ll\mathcal{L}^d\llcorner\{|x|\leq R(n)+h(n)\}, \\ 
+\infty &\text{ else.}\end{cases}
\end{equation*}
The functional $\bar{\mathsf{F}}_n$ is displacement convex in $(\mathcal{P}_{2}(\mathbb{R}^d), \mathcal{W}_2)$ because both the set $\{\mu=u\mathcal{L}^d\in\mathcal{P}_2(\mathbb{R}^d): \ \mu\ll\mathcal{L}^d\llcorner\{|x|\leq R(n)+h(n)\}\}$ 
and the functional $\mathsf{F}$ are displacement convex in $(\mathcal{P}_2(\mathbb{R}^d), \mathcal{W}_2)$ (cf. 
Thm. 2.2 in \cite{mccann1997convexity}). Moreover, the functionals $\bar{\mathsf{F}}_n$ $\Gamma$-converge to $\mathsf{F}$ w.r.t. the topology induced by weak convergence as $n\uparrow+\infty$: the weak $\Gamma$-liminf inequality \eqref{eq: weak Gamma-liminf} is obviously satisfied and for $\nu\in\{\mathsf{F}<+\infty\}$, the sequence of probability measures 
\begin{equation*}
\bar{\nu}_n \ := \ \frac{\nu\llcorner\mathcal{B}(0; R(n))}{\nu(\mathcal{B}(0; R(n)))} 
\end{equation*}
($\mathcal{B}(0; R(n)):=\{|z| < R(n)\}$)
forms a recovery sequence \eqref{eq: recovery sequence}. The analysis of the strong subdifferential from Example \ref{ex: chain rule}, Lem. 10.1.5, Thm. 10.4.6 and Prop. 10.4.14 from \cite{AGS08} show that
\begin{equation*}
\nu_n\rightharpoonup\nu, \ \sup_n\{\mathcal{W}_2(\nu_n, \nu), \bar{\mathsf{F}}_n(\nu_n)\} < +\infty \ \Rightarrow \ \liminf_{n\to+\infty} |\partial\bar{\mathsf{F}}_n|(\nu_n) \ \geq \ |\partial\mathsf{F}|(\nu). 
\end{equation*}
It follows from this  weak $\Gamma$-liminf inequality for the local slopes, Sect. 5 and Prop. 5.2 in \cite{fleissner2016gamma} that the sequence $\bar{\mathsf{F}}_n, n\in\mathbb{N},$ satisfies our main condition \eqref{eq: main condition} for every choice $n=n(\tau)\uparrow+\infty \ (\tau\downarrow0)$, i.e.  whenever $\nu_\tau, \nu\in\mathcal{P}_2(\mathbb{R}^d)$, 
\begin{equation*}
\nu_\tau\rightharpoonup\nu, \quad \sup_\tau\big\{\mathcal{W}_2(\nu_\tau, \nu), \bar{\mathsf{F}}_{n(\tau)}(\nu_\tau)\big\}  < +\infty, 
\end{equation*}
then
\begin{equation}\label{eq: main condition displacement convex}
\liminf_{\tau\to0}\frac{\bar{\mathsf{F}}_{n(\tau)}(\nu_\tau) - \mathcal{Y}_\tau\bar{\mathsf{F}}_{n(\tau)}(\nu_\tau)}{\tau} \ \geq \ \frac{1}{2}|\partial^-\mathsf{F}|^2(\nu), 
\end{equation}
cf. Remark \ref{rem: main ass}\ref{rem: main ass ii}-\ref{rem: main ass iii}. A similar argumentation as in the proof of Theorem \ref{thm: selection of parameters} and an application of Thm. 27, Lem. 11, Lem. 14, Cor. 15 and of the corresponding proofs from \cite{kim2019uniform} (cf. Proposition \ref{prop: convergence rates}) enable us to determine suitable correlations $n\mapsto h(n)$, $n\mapsto R(n)$ and $\tau\mapsto n(\tau)$ so that $h(\tau)=o(\tau^2)$ and 
\begin{equation}\label{eq: main inequality displacement convex}
\liminf_{\tau\to0}\frac{\mathcal{Y}_\tau\bar{\mathsf{F}}_{n(\tau)}(\nu_\tau) - \mathcal{Y}_\tau\mathsf{F}_{n(\tau)}(\nu_\tau)}{\tau} \ \geq \ 0
\end{equation}
whenever
\begin{equation*}
\nu_\tau\rightharpoonup\nu, \quad \sup_\tau\big\{\mathsf{F}_{n(\tau)}(\nu_\tau), \mathcal{W}_2(\nu_\tau, \nu)\big\}  < +\infty. 
\end{equation*}
Assumption \ref{ass: 2} for $\mathsf{F}, \ \mathsf{F}_n$ and the selected parameters, then, is a direct consequence of \eqref{eq: main condition displacement convex} and \eqref{eq: main inequality displacement convex}, and the associated KDE-MM-Scheme performed as per the instructions from Definition \ref{def: KDE-MM-Scheme} and Theorem \ref{thm: KDE-MM-Scheme consistency} almost surely yields the unique gradient flow solution \eqref{eq: differential inclusion} corresponding to $\mathsf{F}$ and each initial datum $\mu^0\in\{\mathsf{F}<+\infty\}$ with compact support, cf. Remarks \ref{rem: example weak Gamma KDE}, \ref{rem: Gamma KDE Approximation variation}, \ref{rem: relaxation phin} and Example \ref{ex: convex case}. We refer the reader to Thm. 11.1.4 in \cite{AGS08} for the uniqueness statement and `contraction estimates', showing that \textit{every} gradient flow solution can be approximated by solutions corresponding to initial data with compact support (which, in turn, can be approximated by our KDE-MM-Scheme). Finally, we remark that the differential equation \eqref{eq: differential inclusion} corresponding to $\mathsf{F}$ represents a weak reformulation of the porous medium equation (for $F(s):= \frac{1}{m-1}s^m, \ m>1$) or the heat equation (for $F(s):=s\log s$). 
\end{remark}

\bibliographystyle{siam}
\bibliography{kdebib}
\end{document}